\setlist[enumerate]{leftmargin=1.5em}
\setlist[itemize]{leftmargin=1.5em}
\definecolor{green}{rgb}{0,0.8,0} % Redefines the color green.
\newtheorem{theorem}{Theorem}[section]
\newtheorem{corollary}[theorem]{Corollary}
\newtheorem{lemma}[theorem]{Lemma}
\newtheorem{proposition}[theorem]{Proposition}
\newenvironment{customthm}[1]
{\innercustomthm}
{\endinnercustomthm}
\theoremstyle{definition}
\newtheorem{definition}[theorem]{Definition}
\newtheorem{example}[theorem]{Example}
\theoremstyle{remark}
\newtheorem{remark}[theorem]{Remark}
\numberwithin{equation}{section}
\newcommand{\nrm}[1]{\Vert#1\Vert}
\newcommand{\nnrm}[1]{{\vert\kern-0.25ex\vert\kern-0.25ex\vert #1 
		\vert\kern-0.25ex\vert\kern-0.25ex\vert}}
\newcommand{\lap}{\Delta}
\newcommand{\rd}{\partial}
\newcommand{\nb}{\nabla}
\newcommand{\lmb}{\lambda}
\newcommand{\tht}{\theta}
\newcommand{\omg}{\omega}
\newcommand{\bbR}{\mathbb R}
\newcommand{\bbS}{\mathbb S}
\newcommand{\bbT}{\mathbb T}
\newcommand{\calG}{\mathcal G}
\newcommand{\calO}{\mathcal O}
\newcommand{\calR}{\mathcal R}
\newcommand{\frka}{\mathfrak a}
\newcommand{\mrI}{\mathrm{I}}
\newcommand{\mrII}{\mathrm{II}}
\newcommand{\tomg}{\tilde{\omg}}				% tilde-omg
\begin{document}
	
	\title{The incompressible Euler equations under octahedral symmetry: singularity formation in a fundamental domain}%: Title of the article
	\author{Tarek M. Elgindi\thanks{Department of Mathematics, UC San Diego, San Diego, USA. E-mail: telgindi@ucsd.edu}\and In-Jee Jeong\thanks{School of Mathematics, Korea Institute for Advanced Study, Seoul, Republic of Korea. E-mail: ijeong@kias.re.kr}}
	\date{\today}
	
	%\author{In-Jee Jeong}%
	%\address{KIAS, Seoul, Korea 02455}%
	%\email{ijeong@kias.re.kr}%
	
	%\thanks{}%
	%\subjclass{}%
	%\keywords{}%
	
	%\date{\today}%
	%\dedicatory{}%
	%\commby{}%
	% ----------------------------------------------------------------

	\maketitle
	
	% ----------------------------------------------------------------
	
	\begin{abstract}
		We consider the 3D incompressible Euler equations in vorticity form in the following fundamental domain for the octahedral symmetry group: $\{ (x_1,x_2,x_3): 0<x_3<x_2<x_1 \}.$ In this domain, we prove local well-posedness for $C^\alpha$ vorticities not necessarily vanishing on the boundary with any $0<\alpha<1$, and establish finite-time singularity formation within the same class for smooth and compactly supported initial data. The solutions can be extended to all of $\bbR^3$ via a sequence of reflections, and therefore we obtain finite-time singularity formation for the 3D Euler equations in $\bbR^3$ with bounded and piecewise smooth vorticities. 
	\end{abstract}
	
	\tableofcontents

\section{Introduction}

In this paper, we consider solutions of the 3D incompressible Euler equations in $\bbR^3$ which are invariant under a specific group of rotations and reflections. In terms of the velocity $u : \bbR \times \bbR^3 \rightarrow \bbR^3$, the 3D Euler equations have the following form: \begin{equation}  \label{eq:Euler}
\left\{
\begin{aligned} 
\rd_t u + u\cdot\nb u = -\nabla p, \\
\nabla \cdot u = 0. 
\end{aligned}
\right.
\end{equation} We shall use the vorticity form of the above, which is obtained by defining $\nabla \times u = \omega$: \begin{equation}  \label{eq:Euler-vort}
\left\{
\begin{aligned} 
\rd_t \omg + u\cdot\nb\omg = [\nabla u]\omg, \\
u = \nabla\times (-\lap)^{-1}\omg. 
\end{aligned}
\right.
\end{equation} In the latter formulation, we need to impose that the initial vorticity is divergence-free. The Euler equations (in either form) respect rotation and reflection symmetries, which means that if the vorticity is invariant under such a symmetry, then the same property holds for the solution as well. 

\subsection{Octahedral Symmetry}

We recall that the octahedral symmetry group of $\mathbb{R}^3$, which will be denoted by $\mathcal{O}$, is generated by the following three rotations: \begin{equation*}
\begin{split}
\begin{cases}
P_1(x_1,x_2,x_3) := (x_1,-x_3,x_2) \\
P_2(x_1,x_2,x_3) := (x_3,x_2,-x_1) \\
P_3(x_1,x_2,x_3) := (-x_2,x_1,x_3). 
\end{cases}
\end{split}
\end{equation*} This group is isomorphic to the symmetry group of 4 elements. We shall consider a extended symmetry group $\tilde{\calO}$, which is generated by $\{ P_i, R_i \}_{i=1}^3$ where \begin{equation*}
\begin{split}
\begin{cases}
R_1(x_1,x_2,x_3) := (-x_1,x_2,x_3) \\
R_2(x_1,x_2,x_3) := (x_1,-x_2,x_3) \\
R_3(x_1,x_2,x_3) := (x_1,x_2,-x_3). 
\end{cases} 
\end{split}
\end{equation*} Note that $\tilde{\calO}$ has 48 elements. We shall fix the following \textit{fundamental domains} for $\calO$ and $\tilde{\calO}$: 
 \begin{equation}\label{eq:U}
\begin{split}
U = \{ (x_1,x_2,x_3) \in \bbR^3 : x_1 , x_2 > x_3 >0 \}, \quad \tilde{U} =  \{ (x_1,x_2,x_3) \in \bbR^3 : x_1 > x_2 > x_3 >0 \}. 
\end{split}
\end{equation} The latter domain $\tilde{U}$ is obtained by cutting the positive octant $(\mathbb{R}_+)^3$ into six identical pieces. Note that the intersection $\tilde{U} \cap \bbS^2$ is a spherical triangle with vertices having angles $\frac{\pi}{2}, \frac{\pi}{3}$, and $\frac{\pi}{4}$. Here, $\bbS^2$ is the unit sphere. We shall denote these vertices by $\frka_2$, $\frka_3$, and $\frka_4$, respectively. Explicitly, we have $\frka_2 = (\frac{1}{\sqrt{2}},\frac{1}{\sqrt{2}},0)$, $\frka_3 = (\frac{1}{\sqrt{3}},\frac{1}{\sqrt{3}},\frac{1}{\sqrt{3}})$, and $\frka_4 = (1,0,0)$; see Figure \ref{fig:fundamental}.

\begin{figure}
	\includegraphics[scale=0.5]{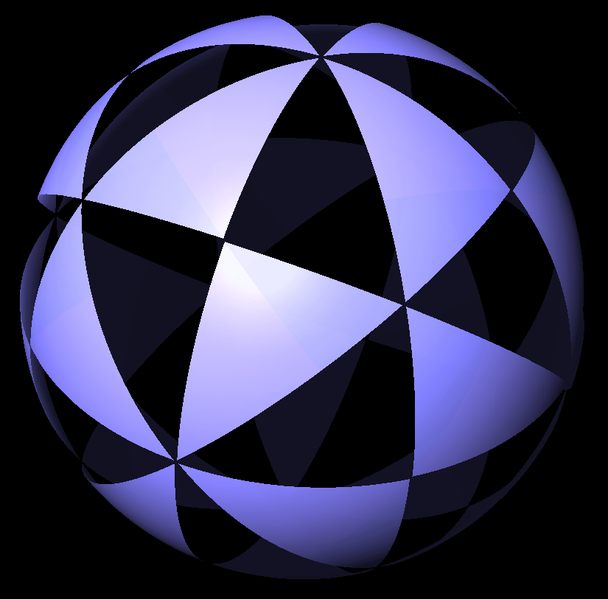} 
	\centering
	\caption{Extended octahedral symmetry group dividing $\mathbb{S}^2$ into 48 pieces. }
	\label{fig:octahedral}
\end{figure}

\begin{figure}
	\includegraphics[scale=0.4]{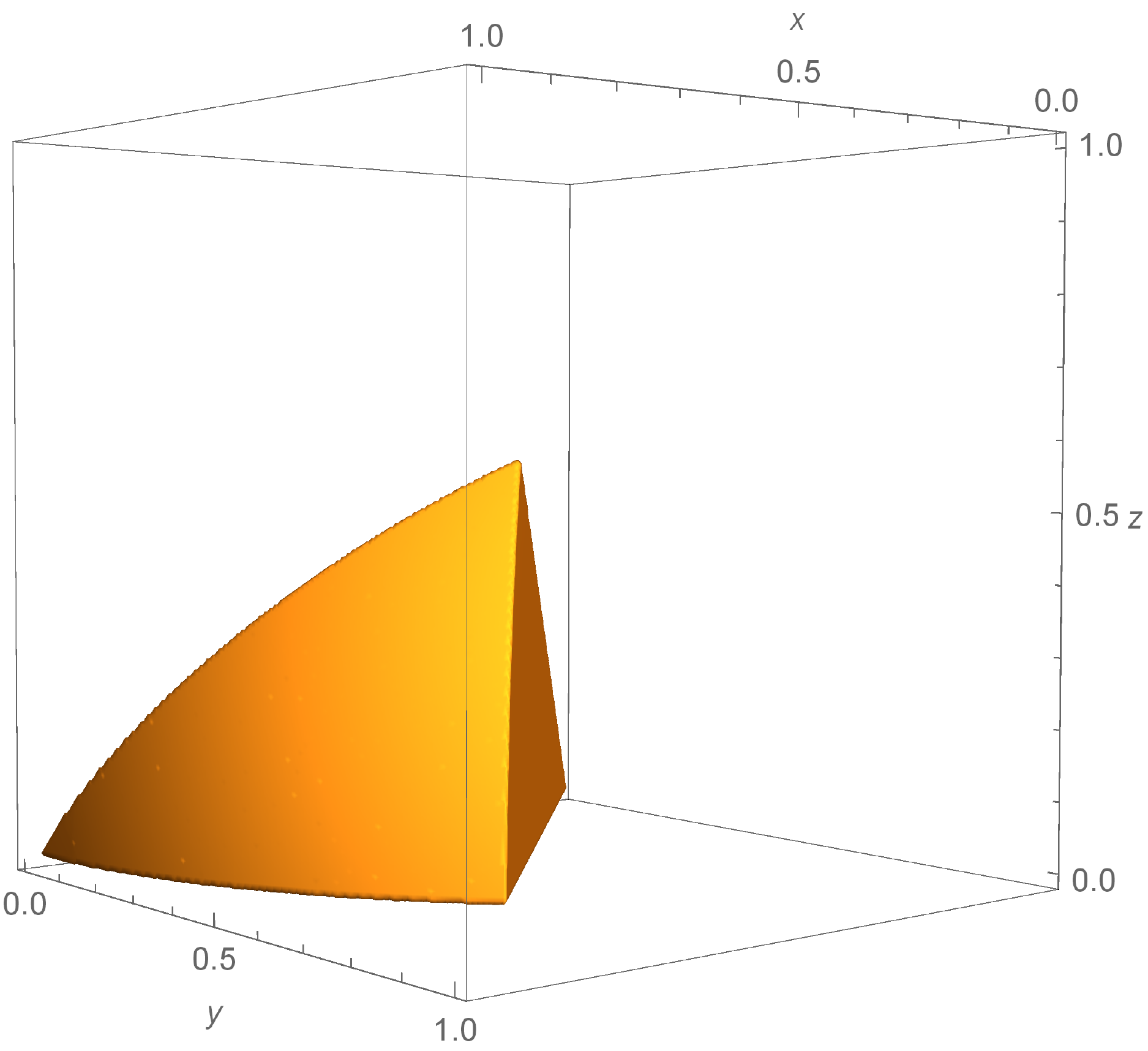} 
	\centering
	\caption{The region $\tilde{U} \cap \{|x|<1 \}$. The intersection $\tilde{U} \cap \{|x|=1\}$ is given by a spherical triangle with vertices $(\frac{1}{\sqrt{2}},\frac{1}{\sqrt{2}},0), (\frac{1}{\sqrt{3}},\frac{1}{\sqrt{3}},\frac{1}{\sqrt{3}})$, and $(1,0,0)$.}
	\label{fig:fundamental}
\end{figure} 

In the following definition, we make precise the notion of rotation and reflection invariance for a vector valued function. In the following, we shall only consider reflections across a hyperplane of $\bbR^3$ containing the origin $\textbf{0}$. 
\begin{definition}\label{def:symm}
	We say that a vector-valued function $f = (f_1,f_2,f_3)^T : \mathbb{R}^3 \rightarrow \mathbb{R}^3$ is (rotationally) symmetric with respect to a rotation $O$ of $\bbR^3$ if $f(Ox) = O(f(x))$ for any $x \in \bbR^3$. On the other hand, we say $f$ is even (odd, resp.) symmetric with respect to the reflection across a hyperplane $R$ if $f(Rx) = R(f(x))$ ($f(Rx) = -R(f(x))$, resp) for any $x \in \bbR^3$.
	
	Next, we say that for a group of rotations $\calG$, $f$ is even (odd, resp.) symmetric with respect to $\calG$ if $f$ is symmetric with respect to all rotations in $\calG$ and even (odd, resp.) symmetric with respect to all reflections in $\calG$. 
\end{definition} 

\begin{comment}
\begin{definition}\label{def:oct_symm}
	We say that a vector-valued function $f = (f_1,f_2,f_3)^T : \mathbb{R}^3 \rightarrow \mathbb{R}^3$ is symmetric with respect to $\mathcal{O}$ if for each element $O \in \mathcal{O}$, \begin{equation}\label{eq:octahedral_symmetry}
	\begin{split}
	f(Ox) = O(f(x))
	\end{split}
	\end{equation} holds for all $x \in \mathbb{R}^3$. 
	
	Moreover, we will say that $f$ is odd with respect to a hyperplane (which is assumed to contain the origin) if \begin{equation*}
	\begin{split}
	Rf(x) = f(Rx)
	\end{split}
	\end{equation*} for all $x \in \mathbb{R}^3$, where $R$ is the reflection map across the hyperplane. 
\end{definition}
\end{comment}
As it is well-known, the incompressible Euler equations respect rotation and reflection symmetries. We state this property specifically for $\calO$ and $\tilde{\calO}$:  \begin{proposition}
	Assume that $\omega_0$ is (odd, resp.) symmetric with respect to $\mathcal{O}$ ($\tilde{\calO}$, resp.) and belongs to a well-posedness class\footnote{This means that at least for some non-empty time interval, the solution exists uniquely in the given class.} to the $3D$ Euler equations. Then, the unique local-in-time solution $\omega(t,\cdot)$ stays symmetric with respect to $\mathcal{O}$ ($\tilde{\calO}$, resp.). 
\end{proposition}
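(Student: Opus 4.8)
The plan is to exploit the equivariance of the Euler system under the orthogonal group action and then invoke uniqueness in the well-posedness class. Fix an element $g$ of $\calO$ (or $\tilde{\calO}$); it is an orthogonal transformation of $\bbR^3$, so $g^{-1} = g^T$ and $\det g = \pm 1$. Given a solution pair $(\omg, u)$ of \eqref{eq:Euler-vort} on some time interval, define the pushed-forward fields
\[
\omg^g(t,x) := \det(g)\, g^{-1}\omg(t, gx), \qquad u^g(t,x) := g^{-1} u(t, gx).
\]
The first step is to check that $(\omg^g, u^g)$ is again a solution of \eqref{eq:Euler-vort}: the transport operator $\rd_t + u\cdot\nb$ and the vortex-stretching term $[\nb u]\omg$ transform covariantly under $x\mapsto gx$ (the chain-rule factors $g$ and $g^{-1}$ cancel pairwise), and $\nb\cdot\omg^g = \det(g)(\nb\cdot\omg)(g\,\cdot\,) = 0$, so the divergence-free constraint is preserved. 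For the Biot--Savart law $u = \nb\times(-\lap)^{-1}\omg$ one uses that $(-\lap)^{-1}$ commutes with the rigid motion $x\mapsto gx$, while $\nb\times$ picks up a factor $\det(g)$ under an orientation-reversing change of variables; that factor is exactly compensated by the $\det(g)$ in the definition of $\omg^g$, yielding $u^g = \nb\times(-\lap)^{-1}\omg^g$. Moreover $\omg\mapsto\omg^g$ is a linear isometry of each of the standard function spaces defining a well-posedness class (it is composition with a rigid motion followed by multiplication by a fixed orthogonal matrix and a sign), so $(\omg^g,u^g)$ lies in the same well-posedness class as $(\omg,u)$, with the same existence interval.

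The second step is to observe that the hypothesis on $\omg_0$ says precisely that $\omg_0^g = \omg_0$ for every generator $g$ of the relevant group: for a rotation $g = P_i$ (where $\det P_i = 1$) this is the rotational symmetry $\omg_0(P_i x) = P_i\omg_0(x)$, while for a reflection $g = R_i$ (where $\det R_i = -1$) the identity $\omg_0^g = \omg_0$ becomes the odd condition $\omg_0(R_i x) = -R_i\omg_0(x)$, matching Definition \ref{def:symm}. This convention is consistent across products because $g\mapsto\det(g)$ is a homomorphism into $\{\pm 1\}$, so the set of fields fixed by the twisted action $\omg\mapsto\omg^g$ is a linear subspace, and it suffices to test on the generators $\{P_i\}$ (resp. $\{P_i,R_i\}$).

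The third step is uniqueness. For each generator $g$, both $\omg(t,\cdot)$ and $\omg^g(t,\cdot)$ solve \eqref{eq:Euler-vort} in the given well-posedness class with the same initial datum $\omg_0^g = \omg_0$; hence by uniqueness they coincide on the whole interval of existence. Unwinding the definition, $\omg^g(t,\cdot) = \omg(t,\cdot)$ is exactly the assertion that $\omg(t,\cdot)$ is symmetric (odd symmetric, for reflections) with respect to $g$. Since $\calO$ (resp. $\tilde{\calO}$) is generated by the $P_i$ (resp. the $P_i$ and $R_i$) and the property propagates through products, $\omg(t,\cdot)$ is symmetric (resp. odd symmetric) with respect to the whole group for every $t$ in the interval of existence.

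The only point requiring genuine care is the bookkeeping of the $\det(g)$ factor: vorticity is a pseudovector, so under an orientation-reversing element of $\tilde{\calO}$ the Biot--Savart law forces $u$ and $\omg$ to carry opposite reflection parities, which is why the statement for the extended group is phrased with \emph{odd} symmetry. Everything else — covariance of the transport and stretching terms under rigid motions, and the isometry property of the pushforward on the function space — is a direct computation, and I do not anticipate any analytic difficulty beyond it.
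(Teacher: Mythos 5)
Your proof is correct and follows essentially the same route as the paper's: both rest on the equivariance of the Biot--Savart law and of the transport and stretching terms under the orthogonal action (with the $\det(g)$ twist accounting for the pseudovector nature of $\omg$), combined with uniqueness in the well-posedness class. If anything, your explicit construction of the pushed-forward solution $\omg^g$ makes precise the paper's remark that symmetry breaking can only occur from non-uniqueness.
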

\begin{proof}
	Take any rotation matrix $O$, and note that the relation \begin{equation*}
	\begin{split}
	O^{-1} K(Ox)O = K(x)
	\end{split}
	\end{equation*} holds for all $x \in \mathbb{R}^3$. This implies that $u = K*\omega$ is symmetric with respect to $\mathcal{O}$ whenever $\omega$ is. 	Next, given a vector $f$ symmetric with respect to $\mathcal{O}$, the gradient matrix $\nabla f$ is symmetric in the sense that \begin{equation*}
	\begin{split}
	O^{-1} \nabla f(Ox) O = \nabla f(x)
	\end{split}
	\end{equation*} holds for all $O \in \mathcal{O}$. In turn, this implies that when $\omega$ is symmetric, both $u\cdot\nabla\omega$ and $\omega\cdot\nabla u$ are symmetric as well. This shows that the $3D$ Euler equations respect rotational symmetries. In particular, symmetry breaking can only occur from non-uniqueness. 
	
	A similar argument shows that the odd symmetry with respect to a reflection propagates in time. Note that if $\omg(t,\cdot)$ is odd symmetric then $u(t,\cdot)$ is even symmetric. This finishes the proof. 
\end{proof}

Given a vector valued function $f$ defined on $\tilde{U}$, we define $\tilde{f}: \bbR^3 \rightarrow \bbR^3$ to be the unique extension of $f$ which is \textit{odd} symmetric with respect to $\tilde{\mathcal{O}}$. Note that strictly speaking $\tilde{f}$ is in general not well-defined on the whole of $\bbR^3$, but the set of such points belongs to a finite union of hyperplanes which has measure zero. In this paper, we shall consider the system \eqref{eq:Euler-vort} in the domain $\tilde{U} \subset \bbR^3$. Note that given a divergence-free vector field $\omega : \tilde{U} \rightarrow\bbR^3$, one can extend it as a function $\tilde{\omega} : \bbR^3 \rightarrow\bbR^3$ using reflections. Then, the corresponding velocity on $\tilde{U}$ has the representation \begin{equation*}
\begin{split}
u(x) = \frac{1}{4\pi} \int_{\bbR^3} \frac{x-y}{|x-y|^3} \times \tilde{\omega}(y)dy 
\end{split}
\end{equation*} (which is simply the convolution against the kernel for $\nabla\times(-\lap)^{-1}$ in $\bbR^3$). It is not difficult to see that this velocity satisfies the slip boundary condition $u \cdot n = 0$ on $\partial\tilde{U}$ with the unit normal vector $n$, which is the natural boundary condition for the Euler equations. Moreover, $u$ satisfies $\nabla\cdot u = 0$ and $\nabla\times u = \omega$ in $\tilde{U}$. With this reflection principle in mind, we shall view Euler solutions defined in $\tilde{U}$ also as solutions in $\bbR^3$. 

It seems that the set of symmetries $\tilde{\calO}$ coincide with those used in the so-called ``high symmetry flows'' of Kida \cite{K1}, although Kida consider symmetric solutions in $\bbT^3$ rather than in $\bbR^3$. We note that this high symmetry flows were numerically investigated in \cite{BoPe,Pe1,Pe2,Pe3} as a candidate for obtaining finite-time singularity formation. See \cite{NB} for some theoretical results in this direction. 

In this work, we view this group of symmetries as a generalization to 3D of certain symmetry groups for 2D flows. In two dimensions, the vorticity is a scalar, and one can consider the following symmetry groups for the vorticity: \begin{enumerate}
	\item[(1)] The group $\calO_{2D}$ generated by $P: v\mapsto v^\perp$ (rotation by $\frac{\pi}{2}$): then the vorticity satisfies $\omega(x) = \omega(x^\perp)$ for all $x \in \bbR^2$,
	\item[(2)] The group $\calR_{2D}$ generated by reflections $R_1$ and $R_2$: this imposes odd symmetry with respect to $x_1, x_2$ axes (``odd-odd") on the vorticity, i.e., $\omega(x_1,x_2) = -\omega(-x_1,x_2) = -\omega(x_1,-x_2)$ for all $x = (x_1,x_2) \in \bbR^2.$ 
	\item[(3)] The group $\tilde{\calO}_{2D}$ generated by $P$, $R_1$, and $R_2$. 
\end{enumerate} We may take the fundamental domains $U_{2D} = (\bbR_+)^2$ for groups $\calO_{2D}$ and $\calR_{2D}$ and $\tilde{U}_{2D} = \{ (x_1,x_2): 0<x_1<x_2 \}$ for $\tilde{\calO}_{2D}$. Note that one can already note some similarity between the group $\calO$ in $\calO_{2D}$, and also between $\tilde{\calO}$ and $\tilde{\calO}_{2D}$.

Recently, solutions to the 2D vorticity equation with symmetries of the above form were intensively studied, mainly towards the goal of achieving double exponential growth rate of the vorticity gradient (\cite{KS,Z,EJ1,IMY1,IMY2,E1,HR2,Xu}). This problem of exhibiting double exponential growth is sometimes regarded as the 2D version of the blow-up problem in 3D, since double exponential growth rate for the vorticity gradient is the best known upper bound for smooth solutions to the 2D Euler equations. The groundbreaking work of Kiselev and Severak \cite{KS} showed that the double exponential growth can be indeed achieved when the domain has physical boundary. In this work, the most important points were the odd-odd symmetry (item (2) in the above) and the vorticity \textit{not} vanishing on the boundary. These points together provided certain stability on the solutions, which allowed the vorticities to sustain their growth for all times. Then it became a natural question to ask what happens for solutions satisfying different types of symmetries, for instance the ones given by groups $\calO_{2D}$ and $\tilde{\calO}_{2D}$. In these cases, it turns out that the stability becomes so strong (in a sense that can be made precise; see \cite{EJ1,IMY1,IMY2}) that the double exponential rate of growth is unachievable, at least ``exactly" at the origin.

Indeed, the attempt to establish blow-up by considering solutions satisfying a group of symmetries is classical. If the symmetry group is continuous (rather than discrete), then it reduces the dimension of the system; in the context of the 3D Euler equations, one may assume that the solution is invariant under all rotations with respect to some fixed axis, which results in a 2D system commonly referred to as the axisymmetric 3D Euler equations. Very recently it was shown that blow-up for this axisymmetric system is possible, for $C^\alpha$ vorticity with $0<\alpha$ small \cite{E3}. Another continuous group of symmetry one can put for the 3D Euler equations goes by the name of ``stagnation point similitude'' ansatz, and a well-known work of Constantin \cite{Con} established blow-up in this case. We note that, however, in this ansatz the vorticity grows linearly at spatial infinity\footnote{So far no local well-posedness result is known for vorticities growing linearly at infinity.}, and it is unclear whether the blow-up will persist upon ``cutting off'' the vorticity at infinity. In principle, one can try to reduce the dimension even further, by looking at a one-dimensional subdomain which is left invariant by the Euler flow. In the case of axisymmetry, one can formally consider the 1D system defined on the symmetry axis, which is unfortunately not closed by itself (see \cite{ChaeNon,ChaeDCDS,ChaeJDE}). Still, in the work of Chae \cite{ChaeNon}, a very interesting result is proved which states that as long as the pressure has a positive second derivative on the symmetry axis, blow-up is bound to occur in finite time. Alternatively, one can take the axisymmetric domain to be a bounded cylinder and consider the reduced system on a vertical line contained in the boundary of the cylinder. While the system cannot be closed by itself again in this case, some formal models have appeared to describe the dynamics on this line and finite-time blow up has been established (\cite{HouLuo,HouLiu,CKY,CHKLSY}). In the same spirit, even when one is concerned with solutions satisfying a discrete set of symmetries, one can consider lower-dimensional equations posed on invariant subdomains for the corresponding symmetry group. This has been considered in \cite{CCW,CCW2} and some conditions which guarantees finite time blow-up were obtained, which has a similar flavor with \cite{ChaeNon}.

In view of these, the goal of this paper is to investigate the blow-up problem (and more generally growth of solutions with time) in 3D using vorticities which are symmetric with respect to either $\calO$ and $\tilde{\calO}$. Then, the Euler equations can be reduced to the fundamental domains $U$ and $\tilde{U}$ and in principle, one can consider vorticities which do not vanish on the boundary $\partial U$ and $\partial\tilde{U}$. In such domains, local well-posedness in classical function spaces is not trivial at all, since the boundary is not smooth but just Lipschitz continuous. Once it is achieved, the question of finite time singularity formation is legitimate, and we answer it in the affirmative in the case of $\tilde{\calO}$ and $\tilde{U}$. The initial data can be $C^\infty$ in  $\tilde{U}$ uniformly up to the boundary and compactly supported, with finite kinetic energy. Roughly speaking, this demonstrates that although the symmetry group $\tilde{\calO}$ provides a strong stability on the solution (analogously to the 2D case), it can still yield finite time blow-up. This is a clear manifestation of the following general principle, seemingly counter-intuitive: the more drastic growth one wants to prove, the more stability is required on the solution (see \cite{KRYZ,Kis,Kis2}). 

In the process of achieving the blow-up result mentioned above, we find that the analogy between the groups $\tilde{\calO}$ and $\tilde{\calO}_{2D}$ goes further than one would naively expect. This shall be demonstrated by the statements and proofs of various singular integral transform estimates in 3D, where the heart of the matter lies in a few explicit computations involving some special functions in 2D, including the so-called Bahouri-Chemin solution \cite{BC}.

\subsection{Main Results}

To state the main results, let us first explicitly write down our convention for the H\"older norms: for $0<\alpha<1$ and an open set $V \subset \bbR^d$, we define \begin{equation*}
\begin{split}
\nrm{f}_{C^\alpha_*(V)} = \sup_{x \ne x', x,x' \in V} \frac{|f(x)-f(x')|}{|x-x'|^\alpha},\quad \nrm{f}_{C^\alpha(V)} = \nrm{f}_{L^\infty(V)} + \nrm{f}_{C^\alpha_*(V)}.
\end{split}
\end{equation*} We recall the scale-invariant H\"older norms introduced in \cite{EJ1}\footnote{Note that if $\textbf{0} \in \bar{V}$ (which will be the case for our applications here) and $|x'|^\alpha f(x') \rightarrow 0$ as $x' \to \textbf{0}$ then from the definition of $\mathring{C}^\alpha$, we have $\nrm{f}_{L^\infty(V)} \le \nrm{f}_{\mathring{C}^\alpha_*(V)}.$}: \begin{equation*}
\begin{split}
\nrm{f}_{\mathring{C}^\alpha_*(V)} = \sup_{x \ne x', x,x' \in V} \frac{||x|^\alpha f(x)-|x'|^\alpha f(x')|}{|x-x'|^\alpha},\quad \nrm{f}_{\mathring{C}^\alpha(V)} = \nrm{f}_{L^\infty(V)} + \nrm{f}_{\mathring{C}^\alpha_*(V)}.
\end{split}
\end{equation*}  Finally, \begin{equation*}
\begin{split}
\nrm{f}_{C^\alpha\cap \mathring{C}^\alpha(V)} = \nrm{f}_{{C}^\alpha(V)}+ \nrm{f}_{\mathring{C}^\alpha(V)}.
\end{split}
\end{equation*} The merits in introducing the space $\mathring{C}^\alpha$ will be explained shortly after stating our main results. In the following we shall just fix some $0<\alpha<1$; its specific choice will not make any essential difference for the results in this paper, although the constants in the inequalities would depend in $\alpha$. 

To begin with, we state the local well-posedness theorem in $\mathring{C}^\alpha(\bbR^3)$ and in $C^\alpha \cap \mathring{C}^\alpha(\bbR^3)$ for the vorticity symmetric with respect to $\calO$. The proof was outlined in the thesis of the second author \cite{Jthesis}. 
\begin{theorem}[{{cf. \cite[Theorem 3.4.7]{Jthesis}}}]\label{thm:lwp}
	Let $\omega_0 \in \mathring{C}^\alpha(\bbR^3)$ (resp. $\omega_0 \in C^\alpha\cap\mathring{C}^\alpha(\bbR^3)$) be a divergence-free vector field and symmetric with respect to $\mathcal{O}$. Then,  there is a $T = T(\nrm{\omega_0}_{\mathring{C}^\alpha})>0$ ($T = T(\nrm{\omega_0}_{C^\alpha\cap\mathring{C}^\alpha})>0$, resp.) and a unique $\mathcal{O}$-symmetric solution $\omega \in C^0([0,T); \mathring{C}^\alpha(\bbR^3))$ ($\omega \in C^0([0,T); C^\alpha\cap\mathring{C}^\alpha(\bbR^3))$, resp.) to the 3D vorticity equations with initial data $\omega_0$. 
\end{theorem}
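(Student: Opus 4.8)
The plan is to set up a standard a-priori estimate / contraction scheme for the vorticity equation in $\tld U$ (equivalently, for the $\calO$-symmetric extension on $\bbR^3$), with the only nontrivial ingredient being the boundedness of the Biot--Savart operator $\omg \mapsto \nb u$ on the scale-invariant space $\mathring C^\alpha$ (and on $C^\alpha \cap \mathring C^\alpha$). First I would record that, by the reflection principle described above, a divergence-free $\calO$-symmetric field $\omg_0$ on $\bbR^3$ produces via $u = \nb \times (-\lap)^{-1}\omg_0$ a velocity with $u \cdot n = 0$ on $\rd U$, so working on $\bbR^3$ with the symmetry constraint is legitimate and one never has to deal with the Lipschitz boundary directly — the symmetry does the job of the boundary condition. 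The transport structure $\rd_t \omg + u \cdot \nb \omg = (\nb u)\omg$ then preserves the divergence-free condition and the $\calO$-symmetry along the (smooth) flow map.

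The core analytic step is the elliptic estimate: I would prove that if $\omg \in \mathring C^\alpha(\bbR^3)$ is divergence-free and $\calO$-symmetric then $\nb u \in \mathring C^\alpha(\bbR^3)$ with $\nrm{\nb u}_{\mathring C^\alpha} \lesssim \nrm{\omg}_{\mathring C^\alpha}$, and similarly $\nrm{\nb u}_{C^\alpha \cap \mathring C^\alpha} \lesssim \nrm{\omg}_{C^\alpha\cap\mathring C^\alpha}$. The point of $\mathring C^\alpha$, as in \cite{EJ1}, is that it is scale-invariant and therefore closes the estimate at the origin, where $u$ need not be Lipschitz; the symmetry forces enough cancellation at $\textbf{0}$ (the $0$-homogeneous part of the Biot--Savart kernel integrates to something compatible with the symmetry, so the borderline logarithm is absent). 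Concretely I would decompose the Calderón--Zygmund operator into a near/far part relative to the scale $|x|$ and use the $\calO$-invariance to kill the would-be logarithmically divergent contribution from the annulus $|y|\sim |x|$; the far field is handled by the decay of $\nb$ of the kernel, and the near field by the $\mathring C^\alpha$ seminorm of $\omg$. This is exactly the kind of "explicit computation in 2D" the introduction advertises, and it is where essentially all the work sits.

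Granting the elliptic estimate, the rest is routine. I would run the a-priori estimate: along the flow, $\frac{d}{dt}\nrm{\omg}_{\mathring C^\alpha} \lesssim \nrm{\nb u}_{L^\infty}\nrm{\omg}_{\mathring C^\alpha} \lesssim \nrm{\omg}_{\mathring C^\alpha}^2$ (and the same for the $C^\alpha\cap\mathring C^\alpha$ norm), using that the flow map generated by $u$ has bi-Lipschitz bounds controlled by $\int \nrm{\nb u}_{L^\infty}$ and that composition with such a map is bounded on $\mathring C^\alpha$ — here one uses $\nrm{u}_{L^\infty}$ finiteness and the homogeneity of the norm to see that the flow respects the weight $|x|^\alpha$ up to constants. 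This yields a time $T = T(\nrm{\omg_0}_{\mathring C^\alpha})$ on which the norm stays bounded. For existence I would either mollify the data / use a Picard iteration on the velocity equation, or regularize the equation (e.g. Euler-$\alpha$ or a Yudovich-type approximation) and pass to the limit using the uniform bound; the symmetry and divergence-free constraints are linear and pass to the limit trivially. For uniqueness, I would take two solutions with the same data, estimate the difference of the velocities in $L^2$ (or in a weak norm below $\mathring C^\alpha$, as in \cite{EJ1}) via a Gronwall argument, exploiting that the transport term loses no derivative in that norm and that the stretching term is controlled by $\nrm{\nb u}_{L^\infty}$. Continuity in time into $\mathring C^\alpha$ follows from the equation and the uniform bound by a standard Bona--Smith-type argument (strong continuity first for mollified data, then approximation).

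The main obstacle is the elliptic estimate near the origin: proving that $\nb(-\lap)^{-1}$ composed with $\nb\times$, applied to a merely $\mathring C^\alpha$, $\calO$-symmetric, divergence-free field, maps back into $\mathring C^\alpha$ with no logarithmic loss. Everything hinges on extracting the right cancellation from the $48$-element symmetry group — identifying which spherical harmonics / homogeneous components of the kernel survive the symmetrization and checking their contribution is finite — and this is precisely the computation that the paper reduces to the $2$-dimensional Bahouri--Chemin-type identities.
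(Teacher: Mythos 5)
Your overall architecture matches the paper's: the crux is the $\mathring{C}^\alpha$ (resp.\ $C^\alpha\cap\mathring{C}^\alpha$) bound $\nrm{\nb u}\lesssim\nrm{\omg}$, obtained by a near/far decomposition at scale $|x|$ in which the $\calO$-symmetry makes the non-decaying spherical-harmonic contributions of the Biot--Savart kernel vanish (this is exactly what the expansion of Section \ref{sec:expansion} and the Riesz-transform estimates of Section \ref{sec:estimate} deliver, yielding $|u(x)|\le C|x|\nrm{\omg}_{L^\infty}$ with no logarithm), followed by a Gronwall a priori estimate and an iteration for existence.

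The one step that would fail as you primarily state it is uniqueness. For data that is merely $\mathring{C}^\alpha$ --- in particular for scale-invariant or otherwise non-decaying vorticity, which is precisely the class this theorem is meant to include --- the velocity grows linearly at infinity and has infinite energy, so an $L^2$ estimate on the difference of two velocities is simply unavailable; and a Gronwall argument on the vorticity difference does not close either, because $\tilde\omg\mapsto\nb\tilde u$ is not bounded on $L^\infty$ or $C^0$. Your parenthetical fallback (``a weak norm below $\mathring C^\alpha$'') points in the right direction, but the argument the paper actually runs is to propagate the quantity $\nrm{|x|^{-1}(u-\tilde u)}_{L^\infty}$ in the velocity formulation, and closing the resulting Osgood-type inequality requires the log-Lipschitz pressure estimate \eqref{eq:pressure_bound} for $\nrm{|x|^{-1}\nb\pi}_{L^\infty}$, which in turn exploits the $\calO$-symmetry of the quadratic source $W_i=\sum_j(v_j\partial_j u_i-\tilde u_j\partial_j v_i)$ to gain decay in the singular integral $\nb(-\lap)^{-1}\nb\cdot W$ (cf.\ \cite{EJB}). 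You never mention the pressure, and this is the genuinely nonstandard ingredient that cannot be skipped; the rest of your outline is sound.
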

This result applies to initial vorticities which are odd symmetric with respect to the extended group $\tilde{\calO}$ since $\tilde{\calO} \supset \calO$ and we have shown already in the above that uniqueness implies propagation of odd symmetry with respect to $\tilde{\calO}$. Now note that the boundary of the fundamental domain $\tilde{U}$ consists of a finite union of infinite sectors. If $\omega_0 \in \mathring{C}^\alpha(\tilde{U})$ is normal to the boundary planes, then $\omega_0$ extends to $\tilde{\omega}_0$ which is odd symmetric with respect to $\tilde{\calO}$ and belongs to $\mathring{C}^\alpha(\bbR^3)$. Therefore, as a simple corollary of the above, we obtain the following:  
\begin{corollary}\label{cor:lwp}
Let $\omega_0 \in \mathring{C}^\alpha(\tilde{U})$ (resp. $\omega_0 \in C^\alpha\cap\mathring{C}^\alpha(\tilde{U})$) be divergence-free and further satisfy $\omega_0 \parallel n$ on $\partial\tilde{U}$ where $n$ is the unit normal vector. Then, there is a $T = T(\nrm{\omega_0}_{\mathring{C}^\alpha})>0$ ($T = T(\nrm{\omega_0}_{C^\alpha\cap\mathring{C}^\alpha})>0$, resp.) and a unique solution $\omega \in C^0([0,T); \mathring{C}^\alpha(\tilde{U}))$ ($\omega \in C^0([0,T); C^\alpha\cap\mathring{C}^\alpha(\tilde{U}))$, resp.) to the 3D vorticity equations with initial data $\omega_0$. 
\end{corollary}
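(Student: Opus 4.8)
The plan is to obtain Corollary~\ref{cor:lwp} as a direct consequence of Theorem~\ref{thm:lwp} and the symmetry‑propagation Proposition, using the odd reflection across $\partial\tilde U$ as a dictionary between the two settings. The first ingredient is an \emph{extension lemma}: if $f\in\mathring{C}^\alpha(\tilde U)$ (resp. $f\in C^\alpha\cap\mathring{C}^\alpha(\tilde U)$) is divergence-free and $f\parallel n$ on $\partial\tilde U$, then the $\tilde\calO$-odd extension $\tilde f$ is a well-defined divergence-free field in $\mathring{C}^\alpha(\bbR^3)$ (resp. $C^\alpha\cap\mathring{C}^\alpha(\bbR^3)$), and conversely every $\tilde\calO$-odd divergence-free field in that class restricts to such an $f$ on $\tilde U$. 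This is essentially the statement already recorded in the paragraph preceding the corollary; I would prove it by a local analysis at each of the finitely many bounding hyperplanes. Near, say, the plane $\{x_3=0\}$ with normal $e_3$, the relation $\tilde f(Rx)=-R\tilde f(x)$ makes the two tangential components of $\tilde f$ odd across the plane and the normal component even; the hypothesis $f\parallel n$ forces the tangential one-sided traces to vanish, so the traces from the two sides agree and the piecewise $C^\alpha$ (resp. $\mathring{C}^\alpha$) field is in fact $C^\alpha$ (resp. $\mathring{C}^\alpha$) across it — for the scale-invariant part one uses that $|x|$ is $\tilde\calO$-invariant, so $|x|^\alpha\tilde f$ is merely the odd extension of $|x|^\alpha f$. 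Edges and the origin of $\tilde U$ are handled by intersecting finitely many such estimates. Since the normal component of $\tilde f$ is continuous across every hyperplane, $\nabla\cdot\tilde f$ carries no singular part there, and as $\nabla\cdot\tilde f=0$ off the hyperplanes we get $\nabla\cdot\tilde f=0$ in $\mathcal D'(\bbR^3)$; thus $f\leftrightarrow\tilde f$ is a bijection with comparable norms.

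For existence, given $\omega_0$ as in the statement I would form $\tilde\omega_0$ by the lemma: it is divergence-free and, being $\tilde\calO$-odd with $\tilde\calO\supset\calO$, symmetric with respect to $\calO$. Theorem~\ref{thm:lwp} then produces $T=T(\nrm{\omega_0}_{\mathring{C}^\alpha})>0$ (resp. $T(\nrm{\omega_0}_{C^\alpha\cap\mathring{C}^\alpha})$) and a unique $\calO$-symmetric solution $\omega\in C^0([0,T);\mathring{C}^\alpha(\bbR^3))$ (resp. in the intersection space) with data $\tilde\omega_0$. Since $\tilde\omega_0$ is $\tilde\calO$-odd and lies in a well-posedness class, the Proposition guarantees that $\omega(t,\cdot)$ stays $\tilde\calO$-odd; its restriction to $\tilde U$ then lies in $C^0([0,T);\mathring{C}^\alpha(\tilde U))$ (resp. the intersection space), equals $\omega_0$ at $t=0$, is divergence-free, satisfies $\omega(t,\cdot)\parallel n$ on $\partial\tilde U$ by the trace computation above, and solves the vorticity equation on $\tilde U$ with the $\bbR^3$ Biot--Savart velocity, which by the reflection principle recalled in the text satisfies $u\cdot n=0$ on $\partial\tilde U$.

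For uniqueness, I would run the dictionary in the other direction: if $\omega,\omega'$ solve the problem on $\tilde U$ in the stated class with the same data, their $\tilde\calO$-odd extensions $\tilde\omega,\tilde\omega'$ belong to the relevant class on $\bbR^3$ by the lemma, are $\calO$-symmetric, and solve the 3D vorticity equation on all of $\bbR^3$ with data $\tilde\omega_0$ — here one uses that the $\bbR^3$ Biot--Savart velocity of a $\tilde\calO$-odd vorticity restricts on $\tilde U$ to exactly the velocity that appears in the equation there, so that the equation on $\tilde U$ together with the symmetry upgrades to the equation on $\bbR^3$. The uniqueness clause of Theorem~\ref{thm:lwp} then gives $\tilde\omega=\tilde\omega'$, hence $\omega=\omega'$.

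The only genuinely nontrivial point is the extension lemma together with the compatibility used above: one must be careful that the odd reflection across the \emph{non-smooth} boundary of $\tilde U$ (a finite union of sectors meeting along edges and at the origin) really lands in the scale-invariant Hölder class and stays divergence-free distributionally, and that ``$\tilde\calO$-odd solution on $\bbR^3$'' and ``solution on $\tilde U$ with $\omega\parallel n$ on $\partial\tilde U$'' are literally the same object via the Biot--Savart kernel. Granting these routine verifications, the corollary follows from Theorem~\ref{thm:lwp} and the symmetry Proposition with no further work, which is why it is labeled ``simple''.
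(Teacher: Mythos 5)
Your argument is correct and is essentially the paper's own: the corollary is obtained by checking that the condition $\omega_0\parallel n$ makes the $\tilde\calO$-odd extension land in $\mathring{C}^\alpha(\bbR^3)$ (resp. $C^\alpha\cap\mathring{C}^\alpha(\bbR^3)$) and remain divergence-free, then applying Theorem~\ref{thm:lwp} together with the propagation of odd symmetry and restricting back to $\tilde U$. You merely spell out the trace/extension verification that the paper leaves implicit in the sentence preceding the corollary.
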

We now present our main result in this paper, which is a local well-posedness result in $\tilde{U}$ which contains the above theorem as a special case: we treat vorticities which do not vanish at the origin. 
\begin{customthm}{A}[Local well-posedness]\label{thm:lwp-corner}
	Let $\omega_0 \in C^\alpha\cap\mathring{C}^\alpha(\tilde{U})$ be divergence-free and further satisfies that $\omega_0^1 + \omega_0^2$ vanishes on $\{ (x_1,x_2,x_3): x_1 = x_2 \ge 0, x_3 = 0 \}$. Then, there exists $T>0$ and a unique solution $\omega \in C([0,T); C^\alpha\cap\mathring{C}^\alpha(\tilde{U}))$ to the 3D Euler equations, satisfying that  $\omega^1(t) + \omega^2(t)$ vanishes on $\{ (x_1,x_2,x_3): x_1 = x_2 \ge 0, x_3 = 0 \}$ for any $0\le t <T$. 
\end{customthm}
 
Within this class of solutions, we can establish finite time singularity formation. 
\begin{customthm}{B}[Finite-time singularity formation]\label{thm:blowup-corner}
	There exists a set of smooth and compactly supported initial data satisfying the assumptions of Theorem \ref{thm:lwp-corner} whose unique local solutions blow-up in finite time: more specifically, given $\omega_0$ in the set, there exists $0<T^*<+\infty$ such that the solution $\omega(t)$ satisfies \begin{equation*}
	\begin{split}
	\liminf_{ t\rightarrow T^*} \nrm{\omega(t)}_{L^\infty(\tilde{U})} = +\infty. 
	\end{split}
	\end{equation*} Either one of the following conditions on the initial data $\omg_0 = (\omg_0^1,\omg_0^2,\omg_0^3)$ is sufficient for finite time blow-up: \begin{enumerate}
		\item $(-\omega_0^1 +\omega_0^2)(\mathbf{0}) \ne 2\omega_0^3(\mathbf{0}) $ and $(-\omega_0^1 +\omega_0^2)(\mathbf{0})\omega_0^3(\mathbf{0})\ne0$,    or
		\item $ (-\omega_0^1 +\omega_0^2)(\mathbf{0}) = 2\omega_0^3(\mathbf{0}) > 0$. 
	\end{enumerate}
\end{customthm}

At this point, let us mention two merits for introducing the space $\mathring{C}^\alpha$. The first point is that, as far as one is concerned with vorticities decaying fast at infinity, the space $\mathring{C}^\alpha$ is much larger than the classical H\"older space. It goes without saying that obtaining a local well-posedness result in a larger space is more difficult. More importantly, one gains access to the (well-defined) dynamics of scale-invariant vorticities; i.e. \begin{equation*}
\begin{split}
\omega(\lmb x) = \omega(x) 
\end{split}
\end{equation*} for any $\lmb > 0$ and $x \in \bbR^3$. Indeed, the 3D Euler equations has the following scale-invariance which fixes time: if $\omega(t,\cdot)$ is a solution, then  \begin{equation*}
\begin{split}
\omega^\lmb(t,x) := \omega(t,\lmb x)
\end{split}
\end{equation*} is again a solution for any $\lmb>0$. In particular, if one has initial vorticity $\omega_0$ which is scale-invariant in the sense defined above, then, upon having \textit{uniqueness}, it is guaranteed that the solution is automatically scale-invariant as long as it exists. Note that a scale-invariant vorticity cannot decay at infinity and is not even continuous at $\bf 0$ unless it is trivial. However, such a function still belongs to $\mathring{C}^\alpha(\bbR^3)$, if it is smooth in the angular directions; to be more precise, a scale-invariant function can be written as the form $\omega(x) = h(x/|x|)$ where $h$ is defined on the unit sphere $\bbS^2$. Then we have $\nrm{\omega}_{\mathring{C}^\alpha(\bbR^3)} \approx \nrm{h}_{C^\alpha(\bbS^2)}$, and the 3D Euler equations reduce to a 2D equation for $h$ defined on the unit sphere. It should be emphasized that the symmetry condition is necessary; already in 2D, without some appropriate symmetry condition, there is no \textit{uniqueness} for smooth and bounded initial vorticity which does not decay at infinity (\cite{EJ1}). It is not too difficult to see that, as demonstrated in \cite{EJ1}, the Biot-Savart integral cannot be convergent in general for non-decaying vorticities. 

The second point is that the space $\mathring{C}^\alpha$ encodes a decay condition for the vorticity which is natural with respect to the Euler equations, even if one is only concerned with $C^\alpha$ vorticities. To see that $\mathring{C}^\alpha$ implies some decay, one can just note that for $|x| \gg 1$ and $|x-x'| \lesssim 1$, we have from the definition that \begin{equation*}
\begin{split}
\frac{|f(x)-f(x')|}{|x-x'|^\alpha} \lesssim |x|^{-\alpha}. 
\end{split}
\end{equation*}  Usually, the decay on the vorticity is imposed by requiring $\omega$ to belong to some finite $L^p$-space or to have compact support. Although this can be done in the present context, we find it much more elegant to use the space $C^\alpha\cap\mathring{C}^\alpha$ to close the a priori estimates for local well-posedness. After this is done, it is easy to show by bootstrapping that if the initial vorticity is compactly supported, the unique local solution still has the same property. In relation to this, a fundamental property of the scale-invariant H\"older norm is the following product rule: \begin{equation*}
\begin{split}
\nrm{gf}_{C^\alpha} \le C \nrm{g}_{\mathring{C}^\alpha}\nrm{f}_{C^\alpha},\quad \mbox{if}\quad f(\textbf{0}) = 0. 
\end{split}
\end{equation*} This shows that, as long as one is concerned with the class of $C^\alpha$ functions vanishing at $\bf 0$, the $\mathring{C}^\alpha$-space works as a module over the class. Indeed, as we shall see below, the function space $\mathring{C}^\alpha$ as well as the above product rule appears naturally in singular integral estimates which involve only $C^\alpha$ functions. Now let us state a result which shows that the dynamics of the scale-invariant solutions described in the above is robust under smooth and vanishing perturbations. Not surprisingly the heart of the matter is the above product rule between $C^\alpha$ and $\mathring{C}^\alpha$ functions.

\begin{theorem}\label{thm:robust}
	Let $\omg_0 \in \mathring{C}^\alpha(\bbR^3)$ be a divergence-free vector field, symmetric with respect to $\calO$, and locally scale-invariant in the following sense: there exists a function $h_0 \in C^\alpha(\bbS^2)$ such that \begin{equation*}
	\begin{split}
	\tomg_0(x):= \omg_0(x) - h_0(\frac{x}{|x|}) \in C^\alpha(\bbR^3),\quad \lim_{|x|\to 0} \tomg_0(x) = 0. 
	\end{split}
	\end{equation*} Then, the unique local solution $\omg(t,x) \in C^0([0,T);\mathring{C}^\alpha(\bbR^3))$  provided by Theorem \ref{thm:lwp} satisfies \begin{equation*}
	\begin{split}
	\tomg(t,x):= \omg(t,x) - h(t,\frac{x}{|x|}) \in C^\alpha(\bbR^3),\quad \lim_{|x|\to 0} \tomg(t,x) = 0
	\end{split}
	\end{equation*} for all $0<t<T$, where $h(t,\cdot)$ is the unique local solution defined on $[0,T)$ to \begin{equation}\label{eq:Euler-SI}
	\begin{split}
	\rd_t h + v \cdot \nb  h = h \cdot \nb v 
	\end{split}
	\end{equation} on $\bbS^2$, where $v(t,\cdot)$ is defined on $\bbR^3$ by $v(t,x) = \nabla\times (-\lap)^{-1}(h(t,\frac{x}{|x|}))$. Note that $v$ is $1-$homogeneous in $|x|$ and the equation \eqref{eq:Euler-SI} reduces to a system on the unit sphere. 
\end{theorem}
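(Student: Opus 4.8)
The plan is to compare $\omg$ with the exact scale-invariant solution determined by $h_0$ and to propagate the difference by a transport estimate; the one genuinely new ingredient is a cancellation at the origin forced by the octahedral symmetry. First I would produce the reference solution. Since $h_0(x/|x|)\in\mathring C^\alpha(\bbR^3)$ with $\nrm{h_0(\cdot/|\cdot|)}_{\mathring C^\alpha}\simeq\nrm{h_0}_{C^\alpha(\bbS^2)}$, is $\calO$-symmetric, and is divergence-free — the last point follows from $\nb\cdot\omg_0=0$ and $\tomg_0\in C^\alpha$ with $\tomg_0(\mathbf 0)=0$, because the distributional divergence of such a $\tomg_0$ cannot contain a nonzero $(-1)$-homogeneous component (test against bumps concentrating at $\mathbf 0$) — Theorem \ref{thm:lwp} yields a unique $\calO$-symmetric $\omg^{\mathrm{SI}}\in C^0([0,T_1);\mathring C^\alpha)$ with datum $h_0(\cdot/|\cdot|)$. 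The scaling $\omg\mapsto\omg(t,\lmb\,\cdot)$ is a symmetry of the equation, so uniqueness forces $\omg^{\mathrm{SI}}(t,\cdot)$ to be $0$-homogeneous: $\omg^{\mathrm{SI}}(t,x)=h(t,x/|x|)$ for a unique $h(t,\cdot)\in C^\alpha(\bbS^2)$, and feeding this ansatz into \eqref{eq:Euler-vort} shows $h$ solves \eqref{eq:Euler-SI} with the stated $v$. Set $\tomg:=\omg-\omg^{\mathrm{SI}}$ and let $\tu:=u-v$, so that $\nb\times\tu=\tomg$ and $\nb\cdot\tu=0$. Subtracting the vorticity equations for $\omg$ and $\omg^{\mathrm{SI}}$ gives, on the common interval of existence,
\begin{equation*}
\rd_t\tomg+(u\cdot\nb)\tomg=(\tomg\cdot\nb)u+\big[(\omg^{\mathrm{SI}}\cdot\nb)\tu-(\tu\cdot\nb)\omg^{\mathrm{SI}}\big]=:\calR,\qquad\tomg|_{t=0}=\tomg_0\in C^\alpha,\ \tomg_0(\mathbf 0)=0.
\end{equation*}

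The crux is a cancellation at the origin. Any $\calO$-symmetric vector field vanishes at $\mathbf 0$, since the standard $3$-dimensional representation of $\calO$ has no nonzero invariant vector; in particular $\tu(t,\mathbf 0)=0$. Moreover, on any time interval where $\tomg(t)\in C^\alpha(\bbR^3)$ one has $\tu(t)\in C^{1+\alpha}_{\mathrm{loc}}$ by Calder\'on--Zygmund theory, so $\nb\tu(t,\mathbf 0)$ is defined; differentiating $\tu(t,Ox)=O\tu(t,x)$ at $x=\mathbf 0$ shows $\nb\tu(t,\mathbf 0)$ commutes with every $O\in\calO$, hence by Schur's lemma (the representation being absolutely irreducible) it is a scalar matrix, which, being trace-free since $\nb\cdot\tu=0$, must vanish: $\nb\tu(t,\mathbf 0)=0$. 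It is exactly the failure of this step for the generic odd--odd symmetry in $2$D — where $\nb\tu(\mathbf 0)$ may be a nonzero trace-free matrix — that produces the hyperbolic Bahouri--Chemin dynamics. Taylor expansion then gives $|\tu(t,x)|\lesssim\nrm{\tomg(t)}_{C^\alpha}|x|^{1+\alpha}$ near $\mathbf 0$. Because $\omg^{\mathrm{SI}}$ is $0$-homogeneous and $\nb\omg^{\mathrm{SI}}$ is $(-1)$-homogeneous, this bound turns the a priori singular product $(\tu\cdot\nb)\omg^{\mathrm{SI}}$ into a bounded quantity that vanishes like $|x|^\alpha$ at the origin, and likewise for $(\omg^{\mathrm{SI}}\cdot\nb)\tu$. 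Combined with the product rule $\nrm{(\tomg\cdot\nb)u}_{C^\alpha}\le C\nrm{\nb u}_{\mathring C^\alpha}\nrm{\tomg}_{C^\alpha}$ (valid since $\tomg(\mathbf 0)=0$) and the Biot--Savart bound $\nrm{\nb u}_{\mathring C^\alpha}\lesssim\nrm{\omg}_{\mathring C^\alpha}$ underlying Theorem \ref{thm:lwp}, this shows $\calR(t)\in C^\alpha(\bbR^3)$, $\calR(t,\mathbf 0)=0$, and $\nrm{\calR(t)}_{C^\alpha}\le C\nrm{\tomg(t)}_{C^\alpha}\big(\nrm{\omg(t)}_{\mathring C^\alpha}+\nrm{\omg^{\mathrm{SI}}(t)}_{\mathring C^\alpha}\big)$.

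With these bounds the argument closes by transport and Gr\"onwall. The flow $\Phi_t$ of $u$ is bi-Lipschitz with constants governed by $\int_0^t\nrm{\nb u}_{L^\infty}$ and fixes $\mathbf 0$; writing the $\tomg$-equation along $\Phi_t$ and inserting the estimate for $\calR$ yields $\tfrac{d}{dt}\nrm{\tomg(t)}_{C^\alpha}\le C(t)\nrm{\tomg(t)}_{C^\alpha}$ with $C(t)$ depending only on $\sup_{[0,t]}(\nrm{\omg}_{\mathring C^\alpha}+\nrm{\omg^{\mathrm{SI}}}_{\mathring C^\alpha})$, while the characteristic through the fixed point $\mathbf 0$ gives $\tomg(t,\mathbf 0)=\tomg_0(\mathbf 0)=0$. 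Since this bound does not depend on the length of the interval on which $\tomg(t)\in C^\alpha$ with $\tomg(t,\mathbf 0)=0$, a standard continuation argument (this set of times is nonempty, open, and — by the Gr\"onwall bound together with $\omg,\omg^{\mathrm{SI}}\in C^0([0,T);\mathring C^\alpha)$ — closed) shows it is all of $[0,T)$; and $\lim_{|x|\to 0}\tomg(t,x)=0$ is immediate from $\tomg(t)\in C^\alpha$ and $\tomg(t,\mathbf 0)=0$. Alternatively, one can bypass continuation by constructing a solution of the form $h(t,\cdot/|\cdot|)+\tomg(t)$ with datum $\omg_0$ via a contraction mapping in $C^0([0,T'];\{f\in C^\alpha:f(\mathbf 0)=0\})$ and then invoking uniqueness from Theorem \ref{thm:lwp}. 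I expect the main obstacle to be the singular term $(\tu\cdot\nb)\omg^{\mathrm{SI}}$: as $h(t,\cdot)$ is only $C^\alpha(\bbS^2)$, $\nb\omg^{\mathrm{SI}}$ is genuinely distributional and this product must be interpreted and estimated with care (in Lagrangian form, or after rewriting it as $\nb\cdot(\tu\otimes\omg^{\mathrm{SI}})$), the redeeming feature being precisely the octahedral cancellation $\nb\tu(t,\mathbf 0)=0$, which supplies the extra power $|x|^{1+\alpha}$ needed to absorb the $|x|^{-1}$ of $\nb\omg^{\mathrm{SI}}$; making this bookkeeping rigorous — splitting near and far from $\mathbf 0$ and using the $C^\alpha$--$\mathring C^\alpha$ product calculus — is the technical heart of the proof.
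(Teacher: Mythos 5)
Your strategy is the one the paper intends: the paper explicitly omits the proof of Theorem \ref{thm:robust} and refers to the cut-off argument of Section \ref{subsec:cutoff}, which is exactly your scheme — subtract the scale-invariant profile, derive the equation for $\tomg$, use the $\calO$-symmetry to force $\tu(t,\mathbf{0})=0$ and $\nb\tu(t,\mathbf{0})=0$ (the paper does this by an explicit Taylor expansion with the slip conditions; your Schur's-lemma argument for the standard representation of $\calO$ is an equivalent and cleaner way to get the same cancellation), and close by Gr\"onwall along the flow, which fixes $\mathbf{0}$. Your identification of $h_0(\cdot/|\cdot|)$ as divergence-free and of $\omg^{\mathrm{SI}}(t,x)=h(t,x/|x|)$ via uniqueness plus the scaling symmetry is also correct.

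The one step that is not closed — and which you yourself flag — is the term $\tu\cdot\nb\omg^{\mathrm{SI}}$, and here your proposed fix is insufficient as stated. Writing $\nb\omg^{\mathrm{SI}}\sim |x|^{-1}(\nb_{\bbS^2}h)(x/|x|)$, the bound $|\tu(t,x)|\lesssim |x|^{1+\alpha}$ absorbs only the radial factor $|x|^{-1}$; it does nothing about the angular factor $\nb_{\bbS^2}h$, which for $h(t,\cdot)\in C^\alpha(\bbS^2)$ only is a genuine distribution of negative order, not a function. Consequently the right-hand side $\calR$ is not in $C^\alpha$ in the Eulerian frame (it is at best $C^{\alpha-1}$ away from the origin), and the differential inequality $\frac{d}{dt}\nrm{\tomg}_{C^\alpha}\le C\nrm{\tomg}_{C^\alpha}$ cannot be derived as you write it. In the paper's special case this issue is invisible because $\omg^{\mathrm{SI}}$ is constant, so $\nb\omg^{\mathrm{SI}}\equiv 0$. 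The correct repair is fully Lagrangian: never differentiate $\omg^{\mathrm{SI}}$, but instead compare $\omg\circ\Phi$ with $\omg^{\mathrm{SI}}\circ\Phi^{\mathrm{SI}}$, where $\Phi$ and $\Phi^{\mathrm{SI}}$ are the flows of $u$ and $v$; the bound $|\tu|\lesssim |x|^{1+\alpha}$ then enters to show $|\Phi(t,x)-\Phi^{\mathrm{SI}}(t,x)|\lesssim_t |x|^{1+\alpha}$, whence $|h(t,\Phi/|\Phi|)-h(t,\Phi^{\mathrm{SI}}/|\Phi^{\mathrm{SI}}|)|\lesssim \nrm{h}_{C^\alpha(\bbS^2)}|x|^{\alpha^2}$. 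Note that this yields vanishing of $\tomg(t,\cdot)$ at the origin and H\"older continuity with exponent $\alpha^2$ rather than $\alpha$ from this particular term, so even the sharp exponent in the conclusion requires additional care (or an approximation of $h_0$ by smooth profiles with uniform estimates). You should either carry out this flow comparison or restructure the fixed-point construction so that $\nb\omg^{\mathrm{SI}}$ never appears undifferentiated against a merely H\"older angular profile.
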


The proof of Theorem \ref{thm:robust} will be omitted since it can be established without much difficulty following the proof of Theorem \ref{thm:blowup-corner}, which is a special case (see also \cite{EJ1}). This result shows that if the initial data is locally scale-invariant, the unique solution stays so as long as it does not blow up. (It is implied in the statement that the maximal time of local existence for $\omega_0$ is not larger than that of $h(0,\cdot)$.) This local scale-invariant profile can be found using the reduced system defined on $\bbS^2$. For the simplicity of presentation, we have defined $v$ by first extending $h$ on $\bbR^3$ and then applying the usual Biot-Savart law. Upon a concrete choice of coordinate system on $\bbS^2$ (e.g. standard spherical coordinates), one can write down a relation between $v$ and $h$ expressed in that coordinate system. It is an interesting open problem to see whether the system \eqref{eq:Euler-SI} admits solutions blowing up in finite time. 

\begin{remark} We give a few remarks regarding the statements of Theorems \ref{thm:lwp-corner} and \ref{thm:blowup-corner}. 
	\begin{itemize}
		\item We emphasize again that the use of spaces $\mathring{C}^\alpha$ is mainly to allow for more general local well-posedness theory. One can completely avoid using these norms and instead work with vorticities which are either compactly supported or belonging to $L^p$ for some finite $p>1$.
		\item In Theorem \ref{thm:lwp-corner}, the condition $\omega_0 \parallel n$ on $\partial\tilde{U}$ is now replaced with a much weaker condition that $\omega_0^1 + \omega_0^2$  vanishes on $\{ (x_1,x_2,x_3): x_1 = x_2 \ge 0, x_3 = 0 \}$. (The latter is implied by the former.) This vanishing condition is not artificial; one can see from the slip boundary condition that this condition is \textit{necessary} for the corresponding velocity to be at least $C^1$ regular in $\tilde{U}$. 	Indeed, assume that $u_0 \in C^1$, and recall that we have on the plane $\{ x_1 = x_2 \}$ the boundary condition  $u^1_0=u^2_0$. Similarly, on $\{x_3=0\}$, we have $u^3_0=0$. Then, \begin{equation*}
		\begin{split}
		\rd_3 u^1_0 = \rd_3 u^2_0, \quad \rd_1 u^3_0 = \rd_2u^3_0 = 0
		\end{split}
		\end{equation*} on $\{ (x_1,x_2,x_3): x_1 = x_2 \ge 0, x_3 = 0 \}$. It follows that \begin{equation*}
		\begin{split}
		\omega_0^1 = \rd_2 u^3_0 - \rd_3 u^2_0 = -\rd_3u_0^1 = -\omega_0^2
		\end{split}
		\end{equation*} and hence $\omega_0^1 + \omega_0^2 = 0$. 
		\item One may extend the solutions provided by Theorem \ref{thm:lwp-corner} to $\mathbb{R}^3$ and obtain $\omega \in L^\infty([0,T); L^\infty(\mathbb{R}^3))$ which can be considered as a 3D vortex patch. In 2D, a global well-posedness result for vortex patches with corner singularity has been established in \cite{SVP1} under the assumption that the vorticity is $m$-fold rotationally symmetric for some $m\ge 3$. Hence Theorem \ref{thm:lwp-corner} can be considered as an extension of this result to the 3D case, although in the current setting the patch boundary near the origin is fixed in time by reflection symmetries. Moreover, Theorem \ref{thm:blowup-corner} can be viewed as a finite-time blow up result for singular vortex patches defined in all of $\bbR^3$. 
		\item Under more restrictive vanishing assumptions on the initial vorticity, one can propagate higher regularity in space up to the blow-up time. The proof for $C^{1,\alpha}$ propagation of the vorticity is sketched in Proposition \ref{prop:higher}.
	\end{itemize} 
\end{remark}
\begin{remark}
	We note that the local well-posedness and singularity formation can be stated on the compact domain $\tilde{U}\cap\mathbb{T}^3$ where $\mathbb{T}^3 = [-1,1)^3$. 
\end{remark}

\subsection{Ideas of the proof}
Let us briefly explain the key ideas involved in the proof. 

\medskip

\noindent (1) Local well-posedness (Theorem \ref{thm:lwp-corner})

\medskip

\noindent 

% Definition of the Riesz transform: vector-valued functions!! (correct notion with respect to solving Euler) 

The main ingredient in the local well-posedness is simply the $C^\alpha$-estimate for the double Riesz transforms $\{ R_{ij} \}_{1\le i,j\le 3}$ in $\tilde{U}$. We emphasize that the transform $R_{ij}$ defined in the following sense: given a \textit{vector-valued} function $f$ in $\tilde{U}$, we first extend $f$ to the whole of $\bbR^3$ as an odd function, denoted as $\tilde{f}$, with respect to $\tilde{\calO}$ and set $R_{ij}f := (\rd_{x_j}\rd_{x_j}(-\lap)^{-1}\tilde{f})|_{\tilde{U}}$. Taking $f$ to be the vorticity defined in $\tilde{U}$, the corresponding velocity gradient is given by a linear combination of the double Riesz transforms defined in this way (see the Appendix for the explicit formulas). 

As we mentioned earlier, rather than directly showing the $C^\alpha$-bound, we prove instead a $C^\alpha\cap\mathring{C}^\alpha$-estimate, which then gives as an immediate consequence the $C^\alpha$-bound for double Riesz transforms for $C^\alpha_c$-functions.  As a general rule, given a function $f \in C^\alpha\cap\mathring{C}^\alpha(\tilde{U})$ and a singular integral transform $T$, the $L^\infty$ and $\mathring{C}^\alpha$ bounds of $Tf$ will follow from the $\mathring{C}^\alpha$ bound of $f$, while the $C^\alpha_*$ estimate follows from the $C^\alpha_*$ bound of $f$. 

We apply a number of reductions to obtain the desired $C^\alpha\cap\mathring{C}^\alpha$-estimate. First, if $f$ is a constant vector-valued function inside $\tilde{U}$, explicit computations show that double Riesz transforms are well-defined as long as $f$ satisfies the vanishing condition in Theorem \ref{thm:lwp-corner}, and that they are again given by constant vector-valued functions. This allows us to assume that, without loss of generality, $f$ vanishes at the origin. Next, since $\partial\tilde{U}$ is smooth away from three half-lines denoted by $\vec{\frka}_2, \vec{\frka}_3$, and $\vec{\frka}_4$, it suffices to consider regularity of $Tf$ close to these half-lines. Moreover, with a radially-homogeneous partition of unity, we may assume that the support of $f$ is ``adjacent'' to one of the half-lines, say $\vec{\frka}_j$. Then $Tf$ is possibly singular only near $\vec{\frka}_j$, as a simple consequence of the ``symmetry reduction'' lemma \ref{lem:symmetry-general}. To prove regularity of $Tf$ up to $\vec{\frka}_j$,  we make an orthogonal change of coordinates so that $\vec{\frka}_j$ is parallel the new $x_3$-axis. In this new coordinates system, any double Riesz transform involving $\rd_{x_3}$ satisfies the $C^\alpha$-bound; roughly speaking, this is because locally $\partial{\tilde{U}}$ is parallel to $\vec{\frka}_j$. More precisely, this is due to the intrinsic property of double Riesz transforms that the integral of the kernel over any hemisphere vanishes. It still remains to treat the Riesz transforms $R_{11}$, $R_{22}$, and $R_{12}$: for these we apply again Lemma \ref{lem:symmetry-general} and reduce the statements to 2D H\"older estimates. The necessary 2D bounds are collected and proved in \ref{subsec:estimate-2D}. The most tricky part is to obtain estimates near $\vec{\frka}_2$, which creates a corner with angle $\frac{\pi}{2}$. Applying the aforementioned 2D reduction near $\vec{\frka}_2$, the vanishing condition in Theorem \ref{thm:lwp-corner} naturally comes into play, since we are now concerned with functions defined on the positive quadrant $Q = (\bbR_+)^2$. It is well-known that (\cite{Grisvard1985}) even for $f \in C^\infty_c(Q)$, $\nb^2(-\lap_Q)^{-1}f \notin L^\infty(Q)$. We restore the $L^\infty$-bound under the vanishing condition of $f$ at the origin, which corresponds to the vanishing condition of the parallel component vorticity along the whole half-line $\vec{\frka}_2$.  

% two-dimensional computations and the crucial vanishing condition 

% half-moon compuatation: property of the double Riesz transforms 

Given the a priori estimates, the arguments for existence and uniqueness are completely straightforward, since the vorticity does not necessarily decay at infinity. For decaying vorticities, uniqueness follows immediately in our setting since the velocity has finite energy and bounded in Lipschitz norm. To deal with this issue with decay, we adapt an argument from our previous work \cite{EJB} on the 2D Boussinesq system which is based on proving stability of the quantity $\nrm{|x|^{-1}u(t,x)}_{L^\infty_x}$. The existence statement is proved by a careful iteration scheme which ensures the crucial vanishing condition for the sequence of vorticities.

\medskip

\noindent (2) Finite-time singularity formation (Theorem \ref{thm:blowup-corner})

\medskip

\noindent The starting point of the blow-up proof is to find explicit solutions which blow up in finite time. They are given by constant vector functions satisfy a system of ODEs obtained from the 3D Euler equations. The existence of such solutions in the case of the 2D Boussinesq system was obtained in \cite{EJB} in domains with angles less than or equal to $\frac{\pi}{4}$. Then using a cut-off argument introduced in \cite{EJB} we can localize the blow-up solution. 

%ODE reduction

%Robustness 

\subsubsection*{Other blow-up results for the 3D Euler equations}

Let us briefly comment on other blow-up results for the 3D incompressible Euler equations. We restrict ourselves to blow-up results concerning Lipschitz and finite-energy velocities. In our previous work \cite{EJEuler}, we have proved finite-time singularity formation for the 3D axisymmetric Euler equations in corner domains which can be arbitrarily close to the flat cylinder. The philosophy in \cite{EJEuler} is similar to the present work, although here we can deal with data which is smooth in a corner domain and extends to $\bbR^3$ as a globally Lipschitz function.

In the recent works \cite{E3}, \cite{ETM}, and \cite{CH}, finite-time singularity formation was achieved with $C^{1,\alpha}$ velocities in axisymmetric setting. The domain is simply $\bbR^3$ for \cite{E3}, \cite{ETM} and $\bbR^3_+$ for \cite{CH}. The approach taken in these works is ``orthogonal'' to the present work; the authors take advantage of small $\alpha>0$ and the ``unbounded'' (in the limit $\alpha\to 0$) term in the double Riesz transforms  to achieve singularity formation. Such unbounded terms do not appear at all in our context, which is precisely the role of rotational symmetries we impose (see Section \ref{sec:expansion} for more details on this).

\subsubsection*{Organization of the paper}

In Section \ref{sec:expansion}, we obtain an expansion of the velocity in terms of the vorticity. In particular, we show sharp estimates on the velocity for vorticity uniformly bounded and symmetric with respect to $\calO$. All the results automatically apply to bounded vorticities defined in $\tilde{U}$. In Section \ref{sec:estimate}, we prove $C^\alpha$-estimates for the double Riesz transforms in domains with corners. We first consider the estimates for the 2D case, since then the 3D estimates can be viewed as a natural extension. Finally, collecting the estimates, we establish Theorems \ref{thm:lwp-corner} and \ref{thm:blowup-corner} in Sections \ref{sec:lwp} and \ref{sec:blowup}, respectively.

\subsubsection*{Notations} 
For convenience of the reader, we shall collect the notations and definitions that will be used throughout the paper. 
\begin{itemize}
	\item Reflection across the plane $\{ x_i = 0 \}$: $R_i$.
	\item Counterclockwise rotation by angle $\frac{\pi}{2}$ fixing the $x_i$-axis: $P_i$.
	\item Symmetry groups and fundamental domains: Recall that the pair $(\calO,U)$ is generated by $\{P_i\}_{1\le i \le 3}$ and $(\tilde{\calO},\tilde{U})$ by $\{P_i,R_i\}_{1\le i \le 3}$.
	\item The origin of $\bbR^n$ will be denoted by $\mathbf{0}$. Moreover, given an open set $A\subset \bbR^n$, the indicator function on $A$ is defined by $\mathbf{1}_A$. 
\end{itemize}

We shall now define the extension rules for a function defined in a fundamental domain of a symmetry group. 
\begin{definition}[Extension rules in the vector-valued case]
	Assume that we are given a group $\mathcal{G}$ of isometries of $\bbR^n$ fixing $\mathbf{0}$, which divides $\bbR^n$ into finitely many fundamental domains. Let $U=U_{\calG}$ be one of the fundamental domains, and $f:U\rightarrow\bbR^n$ be a given vector-valued function. We define $\tilde{f}^{\calG}:\bbR^n\rightarrow\bbR^n$ as follows: for any $y\in\bbR^n$, there is a unique element $g\in\calG$ satisfying $y=gx$ for some $x\in U$. Then we set $\tilde{f}^{\calG}(y)= g(f(x))$. Strictly speaking, $\tilde{f}^\calG$ is not well-defined on $\partial U$ and its images by $g\in\calG$, but this will not cause any trouble in this work. We define the extension of the identity $\mathrm{Id}_{U}$ as follows: $\widetilde{\mathrm{Id}}^\calG(y)$ is the matrix-valued function which equals $g\in\calG$ on the fundamental domain $g(U)$. 
\end{definition}

\begin{definition}[Extension rules in the scalar case]
	In the same setting as above, if $f:U\rightarrow\bbR$ is a scalar-valued function, then we simply define $\tilde{f}^\calG(y) =(-1)^{\mathrm{sgn}(g)}  f(g^{-1}y)$, where we take the $-$ sign if and only if the group element $g$ is orientation reversing (i.e. a reflection). In particular, the extension of the indicator function $\mathbf{1}_U$ will be simply $\tilde{\mathbf{1}}^\calG = \sum_{g\in\calG} (-1)^{\mathrm{sgn}(g)}\mathbf{1}_{g(U)}$.  
\end{definition}

We shall be mainly concerned with $(\calG,U) = (\tilde{\calO},\tilde{U})$ in the 3D case and $(\tilde{\calO}_{2D},\tilde{U}_{2D})$ in the 2D case. We shall omit the superscript $\calG$ and subscript $U$ when the pair $(\calG,U)$ is understood from the context. 

\subsubsection*{Acknowledgments}
Research of TE was partially supported by NSF-DMS 1817134. IJ was supported by the Science Fellowship of POSCO TJ Park Foundation and the National Research Foundation of Korea (NRF) grant No. 2019R1F1A1058486. We remark that Figure \ref{fig:octahedral} belongs to the public domain\footnote{https://commons.wikimedia.org/wiki/File:Octahedral\_reflection\_domains.png}.

\section{An expansion for the velocity}\label{sec:expansion}

The goal of this section is to present an expansion for the velocity in terms of the vorticity in 3D which can be viewed as a generalization of ``Key Lemma'' appeared in \cite{KS,SVP1}. Using the expansion, we deduce that for a bounded vorticity which is symmetric with respect to $\mathcal{O}$, the Biot-Savart law is well-defined pointwise without any decay assumptions on the vorticity.  

\subsection{Two-dimensional case}\label{subsec:expansion-2D}

Recall that in two dimensions, we have the following decomposition of the velocity vector: 
\begin{lemma}\label{lem:vel_expansion}
	Assume that $\omega \in L^\infty_c(\mathbb{R}^2)$. Then, the corresponding velocity $u = (u_1,u_2) := \nabla^\perp\Delta^{-1}\omega$ satisfies the estimate \begin{equation*}
	\begin{split}
	\left| u_1(x_1,x_2) - \mrI^s -  {x_1} \mrII^s(|x|) +  {x_2}\mrII^c(|x|)  \right| \le C|x|  \nrm{\omega}_{L^\infty},
	\end{split}
	\end{equation*}\begin{equation*}
	\begin{split}
	\left| u_2(x_1,x_2) + \mrI^c +  {x_2}\mrII^s(|x|) +  {x_1}\mrII^c(|x|) \right| \le C|x| \nrm{\omega}_{L^\infty},
	\end{split}
	\end{equation*} with some absolute constant $C > 0$ independent on the size of the support of $\omega$. Here, writing $\omega$ in polar coordinates, 
	\begin{equation*} 
	\begin{split}
	& \mrI^s = \frac{1}{2\pi} \int_0^\infty \int_0^{2\pi} \sin(\tht) \omg(r,\tht)  d\tht dr , \quad \mrI^c =  \frac{1}{2\pi} \int_0^\infty \int_0^{2\pi} \cos(\tht) \omg(r,\tht) d\tht dr ,
	\end{split}
	\end{equation*} and \begin{equation*} 
	\begin{split}
	& \mrII^s(r) =  \frac{1}{2\pi} \int_r^\infty \int_0^{2\pi} \sin(2\tht) \omg(s,\tht) ,  d\tht ds \quad  \mrII^c(r) =  \frac{1}{2\pi} \int_r^\infty \int_0^{2\pi} \cos(2\tht) \omg(s,\tht)   d\tht ds .
	\end{split}
	\end{equation*} 
\end{lemma}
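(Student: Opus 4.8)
The plan is to work directly from the explicit two-dimensional Biot--Savart formula
\[
u(x) \;=\; \frac{1}{2\pi}\int_{\bbR^2}\frac{(x-y)^\perp}{|x-y|^2}\,\omega(y)\,dy, \qquad (a,b)^\perp := (-b,a),
\]
which converges absolutely for $\omega\in L^\infty_c(\bbR^2)$ since $|x-y|^{-1}$ is locally integrable in the plane. The first remark is that $\mrI^s$ and $\mrI^c$ are nothing but the components of the velocity at the origin: evaluating the kernel at $x=\mathbf{0}$ and passing to polar coordinates gives $u_1(\mathbf{0}) = \frac{1}{2\pi}\int \frac{y_2}{|y|^2}\omega\,dy = \mrI^s$ and $u_2(\mathbf{0}) = -\frac{1}{2\pi}\int\frac{y_1}{|y|^2}\omega\,dy = -\mrI^c$. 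So the lemma is equivalent to showing that $u(x)-u(\mathbf{0})$ equals the advertised linear-in-$x$ expression up to an error $O(|x|\,\nrm{\omega}_{L^\infty})$ with an \emph{absolute} constant.

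To obtain this I would split the spatial integral at the single scale $|x|$, writing $\bbR^2 = \{|y|\le 2|x|\}\cup\{|y|>2|x|\}$, and treat the two regions separately (I describe $u_1$; the computation for $u_2$ is entirely parallel, with the roles of the two first-order Cartesian weights interchanged). On the near region the essential point is that the kernel $\frac{y_2-x_2}{|x-y|^2}$ has a \emph{linear} numerator and is therefore bounded by $|x-y|^{-1}$; one must use this rather than the crude bound $|x-y|^{-2}$, which is not integrable near $y=x$. Since $\{|y|\le 2|x|\}\subset\{|x-y|\le 3|x|\}$, both $\int_{|y|\le 2|x|}|x-y|^{-1}\,dy$ and $\int_{|y|\le 2|x|}|y|^{-1}\,dy$ are bounded by an absolute multiple of $|x|$, so the near contribution to $u_1(x)-u_1(\mathbf{0})$ is $O(|x|\,\nrm{\omega}_{L^\infty})$, independently of $\nrm{\omega}_{L^1}$ and of $\supp\omega$. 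On the far region one has $|x-y|\ge |y|/2$, so $x\mapsto \frac{y_2-x_2}{|x-y|^2}$ is smooth along the segment $[\mathbf{0},x]$ with second $x$-derivatives $O(|y|^{-3})$; Taylor expanding to first order about $x=\mathbf{0}$, the zeroth-order term is exactly the far part of $u_1(\mathbf{0})$ and cancels, the first-order term --- after rewriting the resulting Cartesian weights $\frac{2y_1y_2}{|y|^4}$, $\frac{y_1^2-y_2^2}{|y|^4}$ in polar coordinates --- is precisely $x_1\mrII^s(2|x|) - x_2\mrII^c(2|x|)$ (the area element contributes the $r^{-1}\,dr\,d\theta$ weight that makes $\mrII^{s,c}(|x|)$ grow logarithmically as $|x|\to0$ and thereby absorb the non-Lipschitz part of $u$ at the origin), and the Taylor remainder is controlled by $\int_{|y|>2|x|}\frac{|x|^2}{|y|^3}\,dy$, again $O(|x|)$ with an absolute constant. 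It then remains only to note that replacing the cutoff $2|x|$ by $|x|$ in the $\mrII$-terms changes them by $|x|\int_{|x|<|y|<2|x|}\frac{dy}{|y|^2} = O(|x|)$, which is absorbed into the error. As a purely organizational device the whole computation can be repackaged in complex notation via $u_1+iu_2 = \frac{i}{2\pi}\int_{\bbC}\frac{\omega(w)}{\bar z-\bar w}\,dA(w)$; after subtracting the value at $z=0$ and one term of the expansion of $(\bar z-\bar w)^{-1}$, everything reduces to the two scale-invariant estimates $\int_{|w|\le|z|}\frac{dA}{|w|\,|w-z|}=O(1)$ and $\int_{|w|>|z|}\frac{dA}{|w|^2|w-z|}=O(|z|^{-1})$.

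There is no genuinely hard step; the only things requiring care are (i) never estimating the Biot--Savart kernel by its naive modulus $|x-y|^{-2}$ near the diagonal, but always keeping the linear numerator so the singular region contributes an integrable $|x-y|^{-1}$, and (ii) tracking all constants so that they are truly absolute --- independent of $\nrm{\omega}_{L^1}$ and of the diameter of $\supp\omega$ --- which is exactly why the decomposition is made at the single scale $|x|$ and why the error is measured against $\nrm{\omega}_{L^\infty}$ alone, with all the (possibly large, support-dependent) non-local mass carried by the explicit quantities $\mrI^{s,c}$ and $\mrII^{s,c}(|x|)$. Everything else is routine Taylor expansion and polar-coordinate bookkeeping.
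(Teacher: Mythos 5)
Your proof is correct and follows essentially the same route as the paper's (sketched) argument: split at the single scale $|x|$, use the linear numerator of the Biot--Savart kernel to get an $O(|x|\,\nrm{\omega}_{L^\infty})$ contribution from the near region, and on the far region subtract the zeroth- and first-order terms of the kernel at $x=\mathbf{0}$ — your Taylor expansion is exactly the paper's ``subtract expressions $p(y)/|y|^n$ until the kernel decays like $|y|^{-3}$,'' and your cutoff-shifting step from $2|x|$ to $|x|$ is the only bookkeeping the sketch leaves implicit. One small caveat: your Cartesian first-order weights $2y_1y_2/|y|^4$ and $(y_1^2-y_2^2)/|y|^4$ are the correct Taylor coefficients, but in polar coordinates they correspond to the measure $\tfrac{1}{s}\,d\theta\,ds$ (equivalently $dy/|y|^2$), not the measure $d\theta\,ds$ appearing in the stated definitions of $\mrII^{s}$ and $\mrII^{c}$ — the statement evidently carries a typographical omission of the factor $1/s$ (consistent with the Kiselev--\v{S}ver\'ak key lemma it generalizes), and your identification is right under that intended normalization.
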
 This appeared in \cite{SVP1} but earlier results can be traced back to \cite{E1,KS,IMY1,Z}. The idea of the proof is very simple: from the explicit representation formula \begin{equation*} 
\begin{split}
& u(x) = \frac{1}{2\pi} \int_{ \bbR^2} \frac{(x-y)^\perp}{|x-y|^2} \omega(y) dy, 
\end{split}
\end{equation*} one sees that the kernel is integrable in the region $|y| \lesssim |x|$ with integral estimate of the form $C|x|  \nrm{\omega}_{L^\infty}$. Therefore it suffices to consider the region $|y| \gtrsim |x|$, where $|x-y| \approx |y|$. Then one may subtract expressions of the form $p(y)/|y|^n$ where $p$ is some homogeneous polynomial from the kernel until the new kernel decays like $|y|^{-3}$ for fixed $x$. Then integrating this fast decaying kernel gives a bound of $C|x|  \nrm{\omega}_{L^\infty}$, while the subtracted expressions evaluate to quantities like $\mrI^{s,c}, \mrII^{s,c}$ defined in the above. We shall utilize the exact same strategy in the proof of the 3D version below. 

Before we proceed to the three-dimensional case, we consider the simplifications of the above expansion obtained by assuming symmetry conditions on $\omg$, which is preserved by the 2D Euler equations. 

\begin{corollary}
	Under the same assumptions as in Lemma \ref{lem:vel_expansion}, suppose in addition that \begin{enumerate}
		\item the vorticity is odd with respect to both axes; or
		\item the vorticity is 4-fold rotationally symmetric; that is, $\omg(x) = \omg(x^\perp)$ for all $x \in \bbR^2$.
	\end{enumerate} Then, the bounds simplify into \begin{equation}\label{eq:oddodd-2D}
\begin{split}
\left| u(x) - \begin{pmatrix}
-x_1 \\ x_2
\end{pmatrix} \mrII^s(|x|) \right| \le C|x|  \nrm{\omega}_{L^\infty}
\end{split}
\end{equation} and \begin{equation}\label{eq:rot-2D}
\begin{split}
\left| u(x) \right|\le C|x|  \nrm{\omega}_{L^\infty},
\end{split}
\end{equation} respectively. 
\end{corollary}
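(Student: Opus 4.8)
The plan is to work in polar coordinates, translate each symmetry hypothesis into an invariance of $\omega(r,\theta)$ under a simple transformation of the angular variable, and then read off which of the scalar quantities $\mrI^s,\mrI^c,\mrII^s,\mrII^c$ appearing in Lemma~\ref{lem:vel_expansion} are forced to vanish; the two claimed estimates then follow by substituting the surviving terms back into that lemma.

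For case (1), oddness across $\{x_1=0\}$ corresponds to the substitution $\theta\mapsto\pi-\theta$ together with $\omega(r,\pi-\theta)=-\omega(r,\theta)$, while oddness across $\{x_2=0\}$ corresponds to $\theta\mapsto-\theta$ with $\omega(r,-\theta)=-\omega(r,\theta)$. First I would apply $\theta\mapsto\pi-\theta$ inside the angular integral defining $\mrI^s$: since $\sin\theta$ is invariant under this map and $\omega$ changes sign, the integrand is antisymmetric, so $\mrI^s=0$. Applying instead $\theta\mapsto-\theta$ to the integrands of $\mrI^c$ and $\mrII^c$, where $\cos\theta$ and $\cos 2\theta$ are even while $\omega$ flips sign, gives $\mrI^c=\mrII^c=0$; the quantity $\mrII^s$ is left unconstrained (under both maps the integrand $\sin 2\theta\,\omega$ is symmetric). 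Inserting $\mrI^s=\mrI^c=\mrII^c=0$ into the two scalar estimates of Lemma~\ref{lem:vel_expansion} and recombining them into a single vector inequality produces \eqref{eq:oddodd-2D}.

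For case (2), the condition $\omega(x)=\omega(x^\perp)$ means $\omega(r,\theta)=\omega(r,\theta+\tfrac{\pi}{2})$. For any $g\in\{\sin\theta,\cos\theta,\sin 2\theta,\cos 2\theta\}$, I would use that the identity $\int_0^{2\pi} g(\theta)\,\omega(r,\theta)\,d\theta = \int_0^{2\pi} g(\theta-k\tfrac{\pi}{2})\,\omega(r,\theta)\,d\theta$ holds for $k=0,1,2,3$ by the invariance and periodicity of $\omega$, and average it over $k$ to obtain $\tfrac14\int_0^{2\pi}\bigl(\sum_{k=0}^{3} g(\theta-k\tfrac{\pi}{2})\bigr)\,\omega(r,\theta)\,d\theta$. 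Since $\sum_{k=0}^{3} g(\theta-k\tfrac{\pi}{2})=0$ in each of the four cases (sum of $\sin$ or $\cos$ at four equally spaced angles), all of $\mrI^s,\mrI^c,\mrII^s,\mrII^c$ vanish, and feeding this into Lemma~\ref{lem:vel_expansion} leaves only the error term, i.e. \eqref{eq:rot-2D}.

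There is no genuine obstacle here: the argument is a short symmetrization of explicit angular integrals. The only points requiring care are the bookkeeping of which angular reflection or rotation corresponds to which substitution ($\theta\mapsto\pi-\theta$, $\theta\mapsto-\theta$, $\theta\mapsto\theta+\tfrac{\pi}{2}$), together with the sign $\omega$ acquires in the odd case, and correctly tracking signs when the two scalar bounds of Lemma~\ref{lem:vel_expansion} are assembled into the vector form \eqref{eq:oddodd-2D}.
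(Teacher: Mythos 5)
Your argument is correct and is the intended one: the paper states this corollary without proof, and the symmetrization of the angular integrals (using $\theta\mapsto\pi-\theta$, $\theta\mapsto-\theta$ for the odd-odd case and averaging over $\theta\mapsto\theta+k\tfrac{\pi}{2}$ for the $4$-fold case) is exactly what is needed; your identification of which of $\mrI^s,\mrI^c,\mrII^s,\mrII^c$ vanish in each case is right. One caveat on the final ``recombination'' step: substituting $\mrI^s=\mrI^c=\mrII^c=0$ into Lemma \ref{lem:vel_expansion} as literally stated gives $u(x)\approx(x_1,-x_2)^T\,\mrII^s(|x|)$, i.e.\ the \emph{opposite} sign from \eqref{eq:oddodd-2D}; this is a sign inconsistency internal to the paper (traceable to the orientation convention for $\perp$ in the Biot--Savart kernel versus the displayed corollary), so you should state explicitly that the two scalar bounds yield the vector estimate up to this overall sign rather than asserting that \eqref{eq:oddodd-2D} drops out verbatim.
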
 The estimate \eqref{eq:oddodd-2D} clearly shows that for vorticity which is bounded and odd-odd, the velocity can be log-Lipschitz near the origin, which is responsible for both double exponential growth of the vorticity gradient \cite{KS} and ill-posedness of the Euler equations at critical regularity \cite{BL1,BL2,EJ,EM1}. On the other hand, such a logarithmic divergence is removed at the origin once one imposes an appropriate symmetry assumption on the vorticity, as in \eqref{eq:rot-2D}.

\subsection{Three-dimensional case}

We now state and prove a corresponding result for the 3D velocity, which is given by the following Biot-Savart law: \begin{equation}\label{eq:3D_Biot-Savart}
\begin{split}
u(x) = \frac{1}{4\pi}\int_{ \mathbb{R}^3} \nabla_x \times \left(|x-y|^{-1} \omega(y)\right) dy. 
\end{split}
\end{equation} We first define auxiliary integral operators \begin{equation}\label{eq:int_first}
\begin{split}
\mrI_{j}[f] := \frac{1}{4\pi} \int_{\mathbb{R}^3} \frac{y_j}{|y|^3}f(y) dy,
\end{split}
\end{equation} \begin{equation}\label{eq:int_second_1}
\begin{split}
\mrII^{(1)}_{j}[f](r) := \frac{1}{4\pi} \int_{| y|\ge 2r} \frac{y_{j+1}^2-y_{j-1}^2}{|y|^5} f(y) dy,
\end{split}
\end{equation} and \begin{equation}\label{eq:int_second_2}
\begin{split}
\mrII^{(2)}_{j}[f](r) := \frac{1}{4\pi}\int_{| y|\ge 2r} \frac{y_{j+1}y_{j-1}}{|y|^5} f(y) dy.
\end{split}
\end{equation} Here $j \in \{ 1, 2, 3\}$ and the indices are defined modulo 3; that is, $y_4 := y_1$ and $y_0 := y_3$. 

\begin{lemma}\label{lem:3D_vel_expansion}
	Assume that $\omega = (\omega_1,\omega_2,\omega_3)$ with $\omega_i  \in L^\infty_c(\mathbb{R}^3)$ for $i = 1, 2, 3$. Then the velocity given by \eqref{eq:3D_Biot-Savart} satisfies the estimate \begin{equation}\label{eq:3D_vel_estimate}
	\begin{split}
	u_i(x) &= B_i(x)  + \mrI_{i+1}[\omg_{i-1}] - \mrI_{i-1}[\omg_{i+1}]  \\
	&\quad - x_{i+1} \mrII^{(1)}_{i-1}[\omg_{i-1}](|x|) + x_{i+1}\mrII^{(1)}_{i}[\omg_{i-1}](|x|) + x_{i-1} \mrII^{(1)}_{i}[\omg_{i+1}](|x|) - x_{i-1} \mrII^{(1)}_{i+1}[\omg_{i+1}](|x|) \\
	&\quad + 3x_i \mrII^{(2)}_{i-1}[\omg_{i-1}](|x|) + 3x_{i-1} \mrII^{(2)}_{i}[\omg_{i-1}](|x|) - 3x_{i+1} \mrII^{(2)}_{i}[\omg_{i+1}](|x|) - 3x_{i} \mrII^{(2)}_{i+1}[\omg_{i+1}](|x|) 
	\end{split}
	\end{equation} where \begin{equation}\label{eq:3D_vel_remainder_est}
	\begin{split}
	|B_i(x)| \le C|x|\left( \nrm{\omega_{i+1}}_{L^\infty}+ \nrm{\omega_{i-1}}_{L^\infty}\right)
	\end{split}
	\end{equation} with some absolute constant $C>0$ independent on the size of the support of $\omega$. 
	
	\begin{comment}
	 \begin{equation}\label{eq:3D_vel_estimate}
	\begin{split}
	u_1(x) &= I_{2,3} - I_{3,2} - x_2II^{(1)}_{3,3}(x) + x_2 II^{(1)}_{1,3}(x) + x_3 II^{(1)}_{1,2}(x) - x_3 II^{(1)}_{2,2}(x) \\
	&\qquad + 3x_1II^{(2)}_{3,3}(x) +3x_3 II^{(2)}_{1,3}(x) - 3x_2 II^{(2)}_{1,2}(x) - 3x_1 II^{(2)}_{2,2}(x) + B(x) 
	\end{split}
	\end{equation} with \begin{equation}\label{eq:3D_vel_remainder_est}
	\begin{split}
	|B(x)| \le C|x|\left( \nrm{\omega_2}_{L^\infty}+ \nrm{\omega_3}_{L^\infty}\right)
	\end{split}
	\end{equation} 
	\end{comment} 
\end{lemma}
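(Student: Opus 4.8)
The plan is to follow verbatim the strategy indicated in the text for the two-dimensional Lemma~\ref{lem:vel_expansion}: work from the explicit Biot--Savart kernel, split the integral at $|y| = 2|x|$, bound the near field crudely, and Taylor-expand the kernel in the far field, retaining the constant and linear-in-$x$ terms while absorbing the quadratic remainder into $B_i$. Computing the curl in \eqref{eq:3D_Biot-Savart} gives $u(x) = \frac{1}{4\pi}\int_{\bbR^3} \frac{(y-x)\times\omega(y)}{|x-y|^3}\,\ud y$, so with the cyclic index convention of the statement,
\[
u_i(x) = \frac{1}{4\pi}\int_{\bbR^3} \frac{(y-x)_{i+1}\,\omega_{i-1}(y) - (y-x)_{i-1}\,\omega_{i+1}(y)}{|x-y|^3}\,\ud y .
\]
It therefore suffices to treat, for a scalar $f\in L^\infty_c(\bbR^3)$ and a fixed index $k$, the quantity $J_k[f](x) := \frac{1}{4\pi}\int_{\bbR^3}\frac{(y-x)_k}{|x-y|^3}f(y)\,\ud y$, and at the end set $(k,f)=(i+1,\omega_{i-1})$ and $(k,f)=(i-1,\omega_{i+1})$.

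For the near field $|y|\le 2|x|$ one has $|(y-x)_k|\,|x-y|^{-3}\le |x-y|^{-2}$ and $\int_{|y|\le 2|x|}|x-y|^{-2}\,\ud y\le\int_{|z|\le 3|x|}|z|^{-2}\,\ud z\lesssim |x|$, so this part of $J_k[f]$ is $\le C|x|\nrm{f}_{L^\infty}$ and is placed into $B_i$. In the far field $|y|>2|x|$ one has $|x-y|\ge|y|/2$, and Taylor's theorem in the $x$-variable about $x=\mathbf 0$ yields
\[
\frac{(y-x)_k}{|x-y|^3} = \frac{y_k}{|y|^3} + \sum_{\ell}x_\ell\Bigl(-\frac{\delta_{k\ell}}{|y|^3}+\frac{3y_ky_\ell}{|y|^5}\Bigr) + O\!\Bigl(\frac{|x|^2}{|y|^4}\Bigr),
\]
and since $\int_{|y|>2|x|}|y|^{-4}\,\ud y\lesssim|x|^{-1}$ the remainder integrates to $\le C|x|\nrm{f}_{L^\infty}$ and again goes into $B_i$. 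The zeroth-order term equals $\mrI_k[f]$ once the missing piece $\int_{|y|\le2|x|}\frac{y_k}{|y|^3}f\,\ud y$ (of size $\lesssim|x|\nrm{f}_{L^\infty}$, into $B_i$) is restored. For the first-order term I would use $|y|^2 = y_{i-1}^2+y_i^2+y_{i+1}^2$ to rewrite the diagonal entry as $-|y|^{-3}+3y_k^2|y|^{-5} = (2y_k^2 - y_{k-1}^2-y_{k+1}^2)|y|^{-5}$, which is a signed sum of two kernels of $\mrII^{(1)}$-type, and observe that each off-diagonal entry $3y_ky_\ell|y|^{-5}$ with $\ell\ne k$ is $3$ times a kernel of $\mrII^{(2)}$-type; since the cutoff $|y|\ge 2|x|$ is exactly the one in the definitions \eqref{eq:int_second_1}--\eqref{eq:int_second_2}, these integrals are precisely $\mrII^{(1)}_j[f](|x|)$ and $\mrII^{(2)}_j[f](|x|)$ for the appropriate subscripts. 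Tracking the subscripts through the cyclic convention $y_4=y_1$, $y_0=y_3$ and substituting $(k,f)=(i+1,\omega_{i-1})$ and $(k,f)=(i-1,\omega_{i+1})$ then reproduces \eqref{eq:3D_vel_estimate}, with $B_i$ the sum of the three error contributions; each of these is bounded by $C|x|(\nrm{\omega_{i+1}}_{L^\infty}+\nrm{\omega_{i-1}}_{L^\infty})$, which is \eqref{eq:3D_vel_remainder_est}.

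There is essentially no analytic obstacle beyond what Lemma~\ref{lem:vel_expansion} already contains; the one genuinely fiddly point is the algebraic and combinatorial bookkeeping — rewriting $-\delta_{k\ell}|y|^{-3}+3y_ky_\ell|y|^{-5}$ as the correct \emph{signed} combination of $\mrII^{(1)}$- and $\mrII^{(2)}$-kernels and then chasing indices modulo $3$ so that the labels land on exactly those in \eqref{eq:3D_vel_estimate}. One should also note the (harmless) asymmetry that $\mrI_k[f]$ is absolutely convergent on all of $\bbR^3$ since its kernel is $O(|y|^{-2})$ near the origin, whereas the $\mrII$-integrals would be only conditionally meaningful without the cutoff — which is precisely why the cutoff at $|y|=2|x|$ must be retained in the final identity.
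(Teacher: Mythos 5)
Your proposal is correct and follows essentially the same route as the paper: a near/far split at scale $|x|$, a crude $O(|x|\nrm{f}_{L^\infty})$ bound on the near field, and extraction of the constant and linear-in-$x$ parts of the kernel in the far field with an $O(|x|^2|y|^{-4})$ remainder. The only difference is cosmetic — you package the far-field expansion as Taylor's theorem about $x=\mathbf{0}$ and split at $|y|=2|x|$ directly, whereas the paper performs the equivalent algebraic subtractions by hand on the region $|x-y|\ge 2|x|$ and swaps to $|y|\ge 2|x|$ at the end — and your index bookkeeping reproduces \eqref{eq:3D_vel_estimate} exactly.
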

\begin{proof}
	It suffices to prove the estimates for $i = 1$, since then other cases can be obtained by shifting the indices. We begin with the following explicit formula for the first component of velocity: \begin{equation}\label{eq:3D_vel_firstcomp}
	\begin{split}
	u_1(x) = -\frac{1}{4\pi} \int_{ \mathbb{R}^3} \frac{(x_2-y_2)\omega_3 - (x_3-y_3)\omega_2}{|x-y|^3} dy,
	\end{split}
	\end{equation} and it suffices to consider the term \begin{equation}\label{eq:3D_vel_oneterm}
	\begin{split}
	\int_{ \mathbb{R}^3} \frac{x_2-y_2}{|x-y|^3} \omega_3(y) dy
	\end{split}
	\end{equation} since the corresponding estimate for the other term can be obtained by simply relabeling the indices. Next, note that \begin{equation*}
	\begin{split}
	\left|\int_{|x-y|\le 2|x|} \frac{x_2-y_2}{|x-y|^3} \omega_3(y) dy\right| \le \nrm{\omega_3}_{L^\infty}\int_{|x-y|\le 2|x|} \frac{1}{|x-y|^2}  dy \le C\nrm{\omega_3}_{L^\infty}|x|
	\end{split}
	\end{equation*} with an absolute constant $C > 0$ so that for the purpose of establishing \eqref{eq:3D_vel_estimate}, one only needs to deal with the region where $|x-y| \gtrsim |x|$. (Hence, $|x-y| \approx |y|$ holds.) We first write \eqref{eq:3D_vel_oneterm} as \begin{equation*}
	\begin{split}
	\int_{ \mathbb{R}^3} \left[\frac{x_2-y_2}{|x-y|^3} + \frac{y_2}{|y|^3} \right] \omega_3(y) dy - \int_{ \mathbb{R}^3} \frac{y_2}{|y|^3} \omega_3(y) dy
	\end{split}
	\end{equation*} and combining the terms in the large brackets, we obtain \begin{equation}\label{eq:inside_bracket}
	\begin{split}
	&\frac{1}{|x-y|^3|y|^3}\left[ (x_2-y_2)(y_1^2+y_2^2+y_3^2)|y| + y_2((x_1-y_1)^2+(x_2-y_2)^2+(x_3-y_3)^2)|x-y| \right] \\
	&\qquad = \frac{1}{|x-y|^3|y|^3}\left[ -y_2|y|^2(|y| - |x-y|) + x_2|y|^3 + y_2|x|^2|x-y| - 2y_2 x\cdot y|x-y| \right] \\
	&\qquad = \frac{1}{|x-y|^3|y|^3}\left[ -y_2|y|^2 \frac{2x\cdot y-|x|^2}{|y|+|x-y|} + x_2|y|^3 + y_2|x|^2|x-y| - 2y_2 x\cdot y|x-y| \right] \\
	&\qquad = \frac{1}{|x-y|^3|y|^3}\left[ \frac{-2y_2|y|^2}{|y|+|x-y|} x\cdot y + x_2|y|^3 - 2y_2|x-y| x\cdot y +|x|^2\left( \frac{y_2|y|^2}{|y|+|x-y|} + y_2|x-y|\right) \right] 
	\end{split}
	\end{equation} To the above expression, we add and subtract the following quantity \begin{equation}\label{eq:subtract}
	\begin{split}
	\frac{1}{|y|^3|x-y|^3} \left[ -2y_2(x\cdot y) \frac{3|y|}{2} + |y|x_2 |y|^2 \right]. 
	\end{split}
	\end{equation} Subtracting \eqref{eq:subtract} from \eqref{eq:inside_bracket} gives \begin{equation}\label{eq:3}
	\begin{split}
	\frac{1}{|x-y|^3|y|^3}\left[ -2y_2(x\cdot y)\left( \frac{|y|^2}{|y|+|x-y|} - \frac{|y|}{2} + |x-y| - |y|\right) \right] + \frac{|x|^2}{|x-y|^3|y|^3}\left[ \frac{y_2|y|^2}{|y|+|x-y|} + y_2|x-y|\right].
	\end{split}
	\end{equation} Denoting \eqref{eq:3} as $A$, note that \begin{equation*}
	\begin{split}
	\left| \int_{|x-y|\ge 2|x|} A(x,y)\omega_3(y) dy  \right| \le C|x|^2 \nrm{\omega_3}_{L^\infty} \int_{|x-y|\ge 2|x|} \frac{1}{|y|^4} dy \le  C|x| \nrm{\omega_3}_{L^\infty} 
	\end{split}
	\end{equation*} since we have a pointwise estimate \begin{equation*}
	\begin{split}
	|A(x,y)| \le C|x|^2|y|^{-4} 
	\end{split}
	\end{equation*} in the region $\{ |x-y|\ge 2|x|\}$ for some absolute constant $C > 0$. Similarly, one may replace the integral \begin{equation*}
	\begin{split}
	\int_{|x-y|\ge 2|x|} \frac{1}{|y|^3|x-y|^3} \left[ -2y_2(x\cdot y) \frac{3|y|}{2} + |y|x_2 |y|^2 \right] \omega_3(y) dy 
	\end{split}
	\end{equation*} with \begin{equation*}
	\begin{split}
	\int_{|x-y|\ge 2|x|} \frac{1}{|y|^6} \left[ -2y_2(x\cdot y) \frac{3|y|}{2} + |y|x_2 |y|^2 \right] \omega_3(y) dy 
	\end{split}
	\end{equation*} at the cost of introducing an error of size $C|x| \nrm{\omega_3}_{L^\infty} $ with some absolute constant $ C > 0$. Hence we have shown that \begin{equation*}
	\begin{split}
	\left| \int_{ \mathbb{R}^3} \frac{x_2-y_2}{|x-y|^3} \omega_3(y) dy + \int_{ \mathbb{R}^3} \frac{ y_2}{| y|^3} \omega_3(y) dy
	- \int_{|x-y|\ge 2|x|} \frac{1}{|y|^5}\left( -3y_2(x\cdot y) + |y|^2 x_2 \right)\omega_3(y) dy    \right| \le C|x| \nrm{\omega_3}_{L^\infty}. 
	\end{split}
	\end{equation*} Finally, \begin{equation*}
	\begin{split}
	&\left| \int_{| y|\ge 2|x|} \frac{1}{|y|^5}\left(x_2(y_1^2-y_2^2) + x_2(y_3^2-y_2^2) -3x_1y_1y_2 - 3x_3 y_2y_3 \right) \omega_3(y) dy \right. \\
	&\left. \qquad - \int_{|x-y|\ge 2|x|} \frac{1}{|y|^5}\left( -3y_2(x\cdot y) + |y|^2 x_2 \right)\omega_3(y) dy    \right| \le C|x| \nrm{\omega_3}_{L^\infty}. 
	\end{split}
	\end{equation*} This finishes the proof. 
\end{proof}

\begin{corollary}
	Assume that $\omega \in (L^\infty_c(\mathbb{R}^3))^3$ is symmetric with respect to $\mathcal{O}$. Then, the corresponding velocity satisfies the estimate \begin{equation*}
	\begin{split}
	|u(x)| \le C|x|\nrm{\omega}_{L^\infty}. 
	\end{split}
	\end{equation*} Similarly, for $\omega \in (L^\infty(\mathbb{R}^3))^3 $ and symmetric with respect to $\mathcal{O}$ but not necessarily compactly supported, the following principal value integral \begin{equation*}
	\begin{split}
	u(x) := \lim_{R \rightarrow +\infty}\frac{1}{4\pi}\int_{ |y| \le R} \nabla_x \times \left(|x-y|^{-1} \omega(y)\right) dy
	\end{split}
	\end{equation*} is pointwise well-defined and satisfies the estimate \begin{equation*}
	\begin{split}
	|u(x)| \le C|x|\nrm{\omega}_{L^\infty}.
	\end{split}
	\end{equation*}
\end{corollary}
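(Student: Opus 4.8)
The goal is to deduce from Lemma~\ref{lem:3D_vel_expansion} that for $\omega \in (L^\infty_c(\bbR^3))^3$ symmetric with respect to $\calO$ one has $|u(x)| \le C|x|\nrm{\omega}_{L^\infty}$, and then to remove the compact support hypothesis. The strategy is purely to exploit cancellation in the explicit integral operators $\mrI_j$, $\mrII^{(1)}_j$, $\mrII^{(2)}_j$ forced by the octahedral symmetry of $\omega$. The plan is as follows.

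First I would record the precise action of $\calO$ on the vorticity. Since $\omega$ is symmetric with respect to $\calO$ in the sense $\omega(Ox) = O\omega(x)$, the rotation $P_3 : (x_1,x_2,x_3) \mapsto (-x_2,x_1,x_3)$ gives $\omega_1(-x_2,x_1,x_3) = -\omega_2(x_1,x_2,x_3)$, $\omega_2(-x_2,x_1,x_3) = \omega_1(x_1,x_2,x_3)$, $\omega_3(-x_2,x_1,x_3)=\omega_3(x_1,x_2,x_3)$, and similarly for $P_1, P_2$; these relate the three components cyclically and impose parity-type cancellations. The key point is that each of the integrands appearing in Lemma~\ref{lem:3D_vel_expansion} is a product of a fixed homogeneous rational function of $y$ of degree $0$ (odd or even under coordinate sign changes and axis swaps) with one component $\omega_{i\pm1}(y)$, integrated over a region ($\bbR^3$ or $\{|y|\ge 2r\}$) that is invariant under all of $\calO$. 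So for each such integral I would change variables by the relevant rotation in $\calO$, use the symmetry relation for $\omega$, and check that the integrand is sent to its own negative, forcing the integral to vanish. Concretely: $\mrI_{i+1}[\omega_{i-1}] - \mrI_{i-1}[\omega_{i+1}]$ vanishes because applying a $\pi/2$-rotation maps $\mrI_{i+1}[\omega_{i-1}]$ onto $\pm\mrI_{i-1}[\omega_{i+1}]$ with the sign making the difference zero (this is the 3D analogue of why $\mrI^{s,c}$ dropped out in the 4-fold symmetric 2D corollary); and each of the remaining eight terms, being of the form $x_k \cdot (\text{even or odd degree-}0\text{ rational in }y)\cdot \omega_{i\pm1}(y)$ integrated over an $\calO$-invariant set, is killed by the same device. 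After all the displayed terms in \eqref{eq:3D_vel_estimate} vanish, only $B_i(x)$ survives, and \eqref{eq:3D_vel_remainder_est} gives exactly $|u_i(x)| \le C|x|(\nrm{\omega_{i+1}}_{L^\infty}+\nrm{\omega_{i-1}}_{L^\infty}) \le C|x|\nrm{\omega}_{L^\infty}$.

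For the non-compactly-supported case, I would argue by truncation. Given $\omega \in (L^\infty(\bbR^3))^3$ symmetric with respect to $\calO$, note that the truncated vorticity $\omega\mathbf{1}_{\{|y|\le R\}}$ is still $\calO$-symmetric (the ball is $\calO$-invariant) and compactly supported, so the first part applies and gives, for the associated truncated velocity $u_R(x) := \frac{1}{4\pi}\int_{|y|\le R} \nabla_x\times(|x-y|^{-1}\omega(y))\,dy$, the uniform bound $|u_R(x)| \le C|x|\nrm{\omega}_{L^\infty}$ with $C$ independent of $R$. To get convergence as $R\to\infty$ I would show $\{u_R(x)\}$ is Cauchy: for $R' > R$, $u_{R'}(x) - u_R(x)$ is the velocity generated by $\omega\mathbf{1}_{\{R<|y|\le R'\}}$, which is again $\calO$-symmetric and compactly supported, so by the same argument its value at $x$ is bounded by $C|x|\nrm{\omega\mathbf{1}_{\{R<|y|\le R'\}}}_{L^\infty}$ — but this is not obviously small, so instead I would use the explicit expansion directly: after the symmetric cancellation only the remainder $B_i$ contributes, and re-examining the proof of Lemma~\ref{lem:3D_vel_expansion}, $B_i(x)$ for a vorticity supported in $\{R<|y|\le R'\}$ with $R \ge 2|x|$ is controlled by $C|x|^2\nrm{\omega}_{L^\infty}\int_{|y|\ge R}|y|^{-4}\,dy \le C|x|^2 R^{-1}\nrm{\omega}_{L^\infty} \to 0$. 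Hence $u_R(x)$ converges, the limit satisfies the same pointwise bound, and the principal value integral is well-defined.

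The main obstacle I anticipate is bookkeeping rather than any genuine difficulty: one must verify, term by term among the nine groups of terms in \eqref{eq:3D_vel_estimate}, that the correct element of $\calO$ produces the sign flip, keeping track of the cyclic index shifts $i \mapsto i\pm 1$ together with the coordinate permutations and sign changes in $P_1,P_2,P_3$. A mild subtlety is that for the vanishing of the $\mrII$-type terms one should group them appropriately (e.g. pair the $\omega_{i-1}$ terms with the $\omega_{i+1}$ terms, as in the structure of \eqref{eq:3D_vel_estimate}) so that a single rotation maps one group to (minus) the other; checking that no ``diagonal'' term is left unmatched is where care is needed. I do not expect to need any estimate beyond those already in Lemma~\ref{lem:3D_vel_expansion}.
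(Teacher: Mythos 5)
Your proposal is correct and follows essentially the same route as the paper: the paper likewise shows that every $\mrI$- and $\mrII$-type term in the expansion of Lemma \ref{lem:3D_vel_expansion} vanishes by exploiting the $\calO$-symmetry (it phrases this as the vanishing of the angular integrals $\int_{\rd B_0(R)} p(y)|y|^{-k}\omg_j(y)\,d\sigma$ for the relevant monomials $p$, using $P_1^2$, $P_2$, $P_2^2$, etc.), leaving only the remainder $B_i$. One small correction to your bookkeeping: the two $\mrI$-terms do not cancel as a pair under a single $\pi/2$-rotation (for $i=1$ that rotation gives $\mrI_2[\omg_3]=-\mrI_3[\omg_2]$, so the difference is $-2\mrI_3[\omg_2]$, not $0$); instead each vanishes individually by the same device you describe for the other terms, e.g. $\omg_2(y)=\omg_2(-y_1,y_2,-y_3)$ from $P_2^2$ kills $\mrI_3[\omg_2]$.
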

\begin{proof}
	We only consider the component $u_1$. To prove the estimate, it suffices to show that the auxiliary integrals involved in \eqref{eq:3D_vel_estimate} vanishes altogether. For this we show that for each fixed radius, the corresponding ``angular integration'' vanishes. Below $d\sigma$ will denote the Lebesgue measure on the sphere $\rd B_0(R)$. 
	
	We begin with noting that from $P_2^2\omega(x) = \omega(P_2^2x)$, we obtain that $\omega_2(x) = \omega_2(-x_1,x_2,-x_3)$. This implies that \begin{equation*}
	\begin{split}
	\int_{\partial B_0(R)} \frac{y_3}{|y|^3}\omega_2(y) d\sigma(y) = 0
	\end{split}
	\end{equation*} for any $R > 0$. Moreover, \begin{equation*}
	\begin{split}
	\int_{\partial B_0(R)} \frac{y_1y_2}{|y|^5}\omega_2(y) d\sigma(y) = \int_{\partial B_0(R)} \frac{y_3y_2}{|y|^5}\omega_2(y) d\sigma(y) =  0.
	\end{split}
	\end{equation*} Then, $P_2\omega(x) = \omega(P_2x)$ gives $\omega_2(x) = \omega_2(-x_3,x_2,x_1)$. This establishes \begin{equation*}
	\begin{split}
	\int_{\partial B_0(R)} \frac{y_1y_3}{|y|^5}\omega_2(y) d\sigma(y) = 0.
	\end{split}
	\end{equation*}  Finally, from $P_1^2\omega(x) = \omega(P_1^2 x) $, we have $\omega_2(x) = -\omega_2(x_1,-x_2,-x_3)$. This allows us to show that \begin{equation*}
	\begin{split}
	\int_{\partial B_0(R)} \frac{y_1^2}{|y|^5}\omega_2(y) d\sigma(y) = \int_{\partial B_0(R)} \frac{y_2^2}{|y|^5}\omega_2(y) d\sigma(y) = \int_{\partial B_0(R)} \frac{y_3^2}{|y|^5}\omega_2(y) d\sigma(y) = 0.
	\end{split}
	\end{equation*} Similarly, it can be shown that \begin{equation*}
	\begin{split}
	\int_{\partial B_0(R)} \frac{y_2}{|y|^3}\omega_3(y) d\sigma(y) = 0
	\end{split}
	\end{equation*} and \begin{equation*}
	\begin{split}
	\int_{\partial B_0(R)} \frac{p(y)}{|y|^5}\omega_3(y) d\sigma(y) = 0
	\end{split}
	\end{equation*} where $p(\cdot)$ is any second order homogeneous polynomial. 
\end{proof}

The following result shows that $u$ defined by the above principal value is the ``correct'' velocity field associated with $\omega$. 
\begin{proposition}\label{prop:velocity}
	Assume that $\omega \in (L^\infty(\mathbb{R}^3))^3$ is symmetric with respect to $\mathcal{O}$ and divergence-free (in the weak sense). Then, for any $1\le p<\infty$, there exists a unique $u \in (W^{1,p}_{loc}(\bbR^3))^3$ satisfying \begin{itemize}
		\item $u$ is symmetric with respect to $\calO$, 
		\item $\nabla\times u = \omg$, 
		\item $\nabla\cdot u = 0$,
		\item $|u(x)| \le C \nrm{\omg}_{L^\infty}|x|$ for all $x \in \bbR^3$ with some constant $C>0$. 
	\end{itemize}
\end{proposition}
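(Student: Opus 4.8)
\textbf{Plan for the proof of Proposition \ref{prop:velocity}.}

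\medskip

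The plan is to construct the desired $u$ by a limiting/decomposition argument and then argue uniqueness by analyzing the difference of two solutions. For \emph{existence}, the natural candidate is precisely the principal value velocity $u(x) = \lim_{R\to\infty} \frac{1}{4\pi}\int_{|y|\le R} \nabla_x\times(|x-y|^{-1}\omega(y))\,dy$ from the preceding corollary; so the first task is to verify that this $u$ has all four listed properties. The pointwise bound $|u(x)|\le C\|\omega\|_{L^\infty}|x|$ is already established in the corollary. For the $\calO$-symmetry, one checks directly from the relation $O^{-1}K(Oz)O = K(z)$ (as in the proof of the Proposition in Section 1) that the principal-value integral commutes with the $\calO$-action, since the truncation regions $\{|y|\le R\}$ are $\calO$-invariant; this is a routine change of variables. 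The identities $\nabla\times u = \omega$ and $\nabla\cdot u = 0$ should be proved locally: fix a ball $B = B_0(\rho)$, split $\omega = \omega\mathbf{1}_{2B} + \omega\mathbf{1}_{(2B)^c} =: \omega^{in} + \omega^{out}$, and observe that on $B$, the contribution of $\omega^{out}$ to the principal value is smooth (the kernel and its derivatives are bounded and the tail integrals converge by the cancellation computed in the corollary, exactly as in Lemma \ref{lem:3D_vel_expansion}), hence curl-free and divergence-free there since $\nabla_x\times\nabla_x\times(|x-y|^{-1}) = 0$ away from $x = y$; while $\omega^{in}\in L^\infty_c$ falls under the classical Biot--Savart theory, giving $\nabla\times(K*\omega^{in}) = \omega^{in}$ and $\nabla\cdot(K*\omega^{in}) = 0$ on $B$ (using $\nabla\cdot\omega = 0$ weakly, which is inherited by $\omega^{in}$ up to a boundary term on $\partial(2B)$ that does not affect the identities on the interior $B$). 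This also yields $u\in W^{1,p}_{loc}$ for every $p<\infty$, since $K*\omega^{in}$ has $\nabla(K*\omega^{in})$ given by the Calder\'on--Zygmund operators $R_{ij}$ applied to $\omega^{in}\in L^p$ for all $p$.

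\medskip

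For \emph{uniqueness}, suppose $u, u'$ are two vector fields with all four properties and set $w = u - u'$. Then $w\in W^{1,p}_{loc}$ is $\calO$-symmetric, $\nabla\times w = 0$, $\nabla\cdot w = 0$, and $|w(x)|\le C|x|$. Curl-free and divergence-free on $\bbR^3$ forces each component of $w$ to be harmonic (by $\Delta w = \nabla(\nabla\cdot w) - \nabla\times(\nabla\times w) = 0$, interpreted weakly, hence $w$ is smooth and harmonic by Weyl's lemma). A harmonic vector field on $\bbR^3$ growing at most linearly is an affine function; being also curl- and divergence-free, $w(x) = Mx + b$ with $M$ a traceless symmetric $3\times 3$ matrix and $b$ constant. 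The $\calO$-symmetry constraint $O^{-1}w(Ox) = w(x)$ for all $O\in\calO$ then forces $b = 0$ (apply $O = P_1^2$, etc., which reverses enough signs) and forces $M$ to commute appropriately with the representation of $\calO$; since $\calO$ acts irreducibly on $\bbR^3$ (it is the rotation group of the octahedron), Schur's lemma gives $M = \lambda I$, and tracelessness gives $\lambda = 0$. Hence $w\equiv 0$.

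\medskip

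The main obstacle I expect is the rigorous justification that the principal-value tail $\int_{|y|\le R, |y|\ge 2\rho}$ converges and defines a smooth function on $B_0(\rho)$ with the right curl and divergence: this is where the cancellation structure exploited in Lemma \ref{lem:3D_vel_expansion} and its corollary is essential, because without the $\calO$-symmetry the tail integral would only be conditionally convergent in a delicate way. One must check that differentiating under the (principal-value) integral sign is legitimate — most cleanly done by writing $u = K*\omega^{in} + (\text{tail})$ for each fixed $\rho$ and noting the decomposition is consistent as $\rho$ varies, so the globally-defined $u$ indeed has the claimed local regularity and satisfies the PDE identities on all of $\bbR^3$. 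The uniqueness part is comparatively soft, the only mild subtlety being the weak-to-strong (Weyl's lemma) step and the representation-theoretic identification of $M$, both of which are standard.
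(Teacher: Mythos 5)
Your overall plan is sound and supplies far more detail than the paper itself, whose entire proof is the two-line deferral ``Existence is provided by the above lemma. For uniqueness one can repeat the proof given in \cite{EJ1} for the 2D Euler equations.'' Your uniqueness half (the difference is curl- and divergence-free, hence harmonic by Weyl's lemma, hence affine by the linear growth bound, and then annihilated by combining the $\calO$-symmetry with symmetry and tracelessness of the matrix) is exactly the Liouville-type argument the authors intend, and it is correct: $b$ must be a fixed vector of $\calO$, hence zero, and $M$ must lie in the commutant of the irreducible standard representation, hence be a multiple of the identity, hence zero.

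One step in the existence half would fail as written. You assert that the far-field contribution is curl-free on $B$ ``since $\nabla_x\times\nabla_x\times(|x-y|^{-1})=0$ away from $x=y$.'' For a constant vector $a$ one has, away from $x=y$,
\[
\nabla_x\times\nabla_x\times\bigl(|x-y|^{-1}a\bigr)=\nabla_x\bigl(a\cdot\nabla_x|x-y|^{-1}\bigr)-a\,\Delta_x|x-y|^{-1}=\nabla_x\bigl(a\cdot\nabla_x|x-y|^{-1}\bigr),
\]
which is a nonzero gradient field, not zero. Consequently $\nabla\times u^{out}=-\nabla\Delta^{-1}(\nabla\cdot\omega^{out})$ on $B$, and likewise $\nabla\times(K*\omega^{in})=\omega^{in}-\nabla\Delta^{-1}(\nabla\cdot\omega^{in})$: neither truncated piece is individually divergence-free, since each carries a surface divergence supported on $\partial(2B)$, and each therefore contributes a nontrivial harmonic gradient to the curl. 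The identity $\nabla\times u=\omega$ on $B$ holds only because $\nabla\cdot\omega^{in}=-\nabla\cdot\omega^{out}$ (from $\nabla\cdot\omega=0$), so that the two defects cancel exactly. Once this cancellation is made explicit the existence argument goes through; the identity $\nabla\cdot u=0$ is unaffected, since the divergence of a curl vanishes for each truncated piece separately.
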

\begin{proof}
	Existence is provided by the above lemma. For uniqueness one can repeat the proof given in \cite{EJ1} for the 2D Euler equations. We omit the details. 
\end{proof}

In light of the above proposition, given any $\omega \in (L^\infty(\tilde{U}))^3$, we may first extend $\omega$ to $\tilde{\omega} \in (L^\infty(\mathbb{R}^3))^3$ and the corresponding velocity $u \in (W^{1,p}_{loc}(\tilde{U}))^3$ is well-defined as the (principal value of the) Biot-Savart integral against $\tilde{\omega}$. The difficult part is to prove that, under suitable additional assumptions on $\omega$, we have $u \in W^{1,\infty}_{loc}$ (among other things), and this is the content of the next section.

\section{H\"older estimates under octahedral symmetry}\label{sec:estimate}

\subsection{Two-dimensional case}\label{subsec:estimate-2D}

\subsubsection{Main Lemmas}\label{subsubsec:lemmas}

We shall state a few sharp H\"older estimates for the double Riesz transformations for functions defined on $\bbR^2$. The proofs are carried out later in \ref{subsubsec:proofs}, after performing some illuminating computations in \ref{subsubsec:explicit}. We shall define the following sector domains in 2D using polar coordinates: \begin{equation*}
\begin{split}
\Omega_m := \{ (r,\theta): 0 < \theta < \frac{\pi}{m} \}.
\end{split}
\end{equation*} We are particularly interested in the cases $m = 2, 3$, and $4$. Given $f \in L^\infty(\Omega_m)$, we shall define $R_{ij}f$ to be $\rd_{x_i}\rd_{x_j} (-\lap_D)^{-1}f$ where $\lap_D$ is the Dirichlet Laplacian. Let us present three simple lemmas which establish $C^\alpha$ estimates for $R_{ij}$ on $\Omega_m$. The first result is well known and a standard reference is the book of Grisvard \cite{Grisvard1985} (see also \cite{EJB,EJEuler}). 
\begin{lemma}\label{lem:2D-1}
	Let $f \in C^\alpha(\Omega_m)$ with $m = 3, 4$. Then we have for any $1\le i,j\le2$, \begin{equation*}
	\begin{split}
	\Vert R_{ij}f\Vert_{L^\infty(\Omega_m)} \le C\nrm{f}_{L^\infty(\bbR^2)} \log(10 + \frac{\Vert f\Vert_{\mathring{C}^\alpha(\Omega_m)}}{\nrm{f}_{L^\infty(\bbR^2)}} ),
	\end{split}
	\end{equation*} and \begin{equation*}
	\begin{split}
	\Vert R_{ij}f\Vert_{\mathring{C}_*^\alpha(\Omega_m)} \le C\Vert f\Vert_{\mathring{C}_*^\alpha(\Omega_m)},\quad \Vert R_{ij}f\Vert_{C_*^\alpha(\Omega_m)} \le C \Vert f\Vert_{C_*^\alpha(\Omega_m)}.
	\end{split}
	\end{equation*}  In particular, we have \begin{equation*}
	\begin{split}
	\nrm{R_{ij}f}_{ C^\alpha\cap\mathring{C}^\alpha(\Omega_m) } \le C  \nrm{f}_{C^\alpha\cap\mathring{C}^\alpha(\Omega_m)}
	\end{split}
	\end{equation*} and if in addition $f$ is supported in $\{ |x|<R \}$, \begin{equation*}
	\begin{split}
	\nrm{R_{ij}f}_{C^\alpha(\Omega_m)} \le C(R)  \nrm{f}_{C^\alpha_c(\Omega_m)}.
	\end{split}
	\end{equation*}
\end{lemma}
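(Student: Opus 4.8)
The plan is to exhibit $R_{ij}$ on $\Omega_m$ as an explicit singular integral operator built from the Dirichlet Green's function and to verify that its kernel satisfies Calder\'on--Zygmund bounds \emph{uniformly up to the corner}, which turns out to hold precisely when $m\ge 3$ (and $0<\alpha<1$). Since $\Omega_m$ is the plane sector of opening $\pi/m$, the conformal map $\Phi_m(z)=z^m$ sends it onto the upper half-plane $\bbH$, and by conformal invariance of the two-dimensional Green's function together with the method of images one has
\[
G_{\Omega_m}(z,w)=G_{\bbH}(z^m,w^m)=\frac{1}{2\pi}\sum_{g\in\calG_m}\sgn(g)\,\log\frac{1}{|z-gw|},
\]
where $\calG_m$ is the dihedral group of order $2m$ generated by the reflections across the two bounding rays of $\Omega_m$ and $\sgn(g)=\det g=\pm1$; the $m$ rotated copies $gw$ carry $+$ signs and the $m$ reflected copies carry $-$ signs. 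Then $(-\lap_D)^{-1}f(z)=\int_{\Omega_m}G_{\Omega_m}(z,w)f(w)\,dw$ and $R_{ij}f=\mathrm{p.v.}\int_{\Omega_m}K_{ij}(z,w)f(w)\,dw$ with $K_{ij}=\rd_{z_i}\rd_{z_j}G_{\Omega_m}$ (with the customary local term for $i=j$). Because the Green's function of a planar cone is invariant under simultaneous dilations of $z$ and $w$, the kernel $K_{ij}$ is homogeneous of degree $-2$, i.e.\ it has the Calder\'on--Zygmund scaling.

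\textbf{Splitting the kernel.} I would decompose $K_{ij}$ according to the group element. The identity term is the classical planar double Riesz kernel $\rd_{z_i}\rd_{z_j}\tfrac{1}{2\pi}\log\tfrac{1}{|z-w|}$, handled by standard theory. The two terms coming from the reflections across the boundary rays are singular only along those rays, and near the relative interior of each ray the geometry is exactly that of a half-plane; these contribute the familiar image CZ kernels, with the familiar $L^\infty\to\mathrm{BMO}$ (hence logarithmic) behaviour and clean $C^\alpha_*\to C^\alpha_*$ bounds. Every remaining image point $gw$ lies outside the closed sector and stays a definite distance from $z$ unless both $z$ and $w$ approach the corner $\mathbf 0$; away from $\mathbf 0$ these terms are smooth and satisfy $|K_{ij}|\aleq|z-w|^{-2}$ and $|\nb_z K_{ij}|\aleq|z-w|^{-3}$. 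A partition of unity subordinate to (interior $\cup$ neighbourhood of each ray $\cup$ neighbourhood of the corner) then reduces everything to these three situations.

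\textbf{The corner --- the crux.} What remains is the behaviour of $K_{ij}$ as $z\to\mathbf 0$, and this is where the hypotheses $m\in\{3,4\}$ and $\alpha<1$ enter; this is the main obstacle of the proof. By the degree $-2$ homogeneity it suffices to control $\rd^2 u$ near the corner for $u=(-\lap_D)^{-1}f$. Near $\mathbf 0$ the solution admits an asymptotic expansion governed by the indicial exponents $km$, $k\ge1$, of the sector (the Dirichlet eigenvalues $(km)^2$ of the angular Laplacian on the arc of opening $\pi/m$, giving the homogeneous harmonic modes $r^{km}\sin(km\theta)$) together with the polynomial-type particular solutions generated by the Taylor expansion of $f$ at the corner; a logarithmic factor is produced exactly when a forcing order $k$ hits an indicial exponent, i.e.\ when $m\mid k$. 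For $m\in\{3,4\}$ the lowest such order is $k=m$, so the only anomalous term is $\sim r^m\log r$, whose second derivatives are $\aleq r^{m-2}|\log r|\aleq r^\alpha$ (using $m-2\ge1>\alpha$), hence lie in $C^\alpha_*$ up to the corner with no loss, while $\rd^2 u$ stays bounded there. The decisive point is that the forcing order $k=2$ --- which for $m=2$ would be resonant and would force the fatal $r^2\log r$ term, so that $\rd^2 u\notin L^\infty$ --- is \emph{non-resonant} for $m\ge3$; equivalently, this is Grisvard's threshold $2+\alpha<\pi/(\pi/m)=m$ for a sector. The failure at $m=2$ is precisely why Theorem~\ref{thm:lwp-corner} carries its vanishing hypothesis, which suppresses the $k=2$ forcing at the corner.

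\textbf{Conclusion.} Collecting the three pieces, $K_{ij}$ is a Calder\'on--Zygmund kernel on $\Omega_m$ uniformly up to the boundary, whence $R_{ij}:C^\alpha_*(\Omega_m)\to C^\alpha_*(\Omega_m)$ by Schauder-type estimates and $R_{ij}:L^\infty\to\mathrm{BMO}$. The scale-invariant bounds follow by rescaling: applying the $C^\alpha_*$ estimate at unit scale and using the degree $-2$ homogeneity of $K_{ij}$ yields $\nrm{R_{ij}f}_{\mathring C^\alpha_*(\Omega_m)}\aleq\nrm{f}_{\mathring C^\alpha_*(\Omega_m)}$. The $L^\infty$ estimate with the logarithmic correction is the standard splitting $f=f\,\mathbf 1_{B_\rho(x_0)}+f\,\mathbf 1_{B_\rho(x_0)^c}$, bounding the near piece by $\rho^\alpha$ times the (scale-invariant) H\"older seminorm and the far piece by $\nrm{f}_{L^\infty}$ times $\log(1/\rho)$ (from integrating $|z-w|^{-2}$ over an annulus), followed by optimization in $\rho$; see \cite{EJ1,EJB} for this routine argument. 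Finally, if $\supp f\subset\{|x|<R\}$ one checks directly that $\nrm{f}_{\mathring C^\alpha(\Omega_m)}\le C(R)\nrm{f}_{C^\alpha(\Omega_m)}$, so the compactly supported bound is an immediate consequence of the $C^\alpha\cap\mathring C^\alpha$ estimate. The entire statement may alternatively be quoted from Grisvard's treatment of elliptic boundary value problems on corner domains \cite{Grisvard1985}.
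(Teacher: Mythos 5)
The paper gives no proof of this lemma at all: it is quoted from \cite{Grisvard1985,EJB}, with the remark that one may alternatively adapt the harder proof of Lemma \ref{lem:2D-3}. Your starting point coincides with the paper's framework: the image sum for $G_{\Omega_m}$ is exactly the statement that $(-\lap_D)^{-1}f=(-\lap_{\bbR^2})^{-1}\tilde f$ with $\tilde f$ the odd extension under the dihedral group, and your split into the identity term, the two adjacent reflections, and the remaining images is the paper's decomposition $\tilde f=\tilde f^{m}+\tilde f^{r}$ combined with the symmetry reduction Lemma \ref{lem:symmetry-general}. The route is therefore the same; the difficulty is that the two places where the actual work happens are asserted rather than proved.

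First, the $L^\infty$ bound. The lemma permits $f$ with no decay (only $\nrm{f}_{L^\infty(\bbR^2)}<\infty$), so $R_{ij}f$ is a principal value at spatial infinity, and your ``far piece $\aleq \nrm{f}_{L^\infty}\log(1/\rho)$ from integrating $|z-w|^{-2}$ over an annulus'' is not correct as stated: the integral of $|z-w|^{-2}$ over the complement of a ball diverges. Convergence of the far field requires the cancellation $\int_{\rd B_R}p(y)|y|^{-4}\tilde f(y)\,d\sgm=0$ for the second-order harmonics $p=y_1y_2$ and $y_1^2-y_2^2$, which holds because the odd extension across a $\pi/m$ sector carries only $\sin(jm\theta)$ angular modes and $jm\ne 2$ when $m\ge 3$; this is the mechanism of Section \ref{sec:expansion} and of hypothesis \eqref{eq:cancellation} in Lemma \ref{lem:C^circlealpha_quad}, and it is exactly what fails for $m=2$. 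You never invoke it. Second, the corner. A $C^\alpha$ function has no Taylor expansion beyond its value at $\mathbf{0}$, so the indicial analysis only ever concerns the constant term $f(\mathbf{0})$; the fatal $m=2$ resonance producing $r^2\log r$ occurs at forcing order $k=0$, not $k=2$ (resonance is at $k=jm-2$, not $m\mid k$), which is precisely why the correct vanishing condition in Lemma \ref{lem:2D-2} and Theorem \ref{thm:lwp-corner} is $f(\mathbf{0})=0$. More importantly, the assertion that the assembled kernel is ``Calder\'on--Zygmund uniformly up to the corner'' is the entire content of the $C^\alpha_*$ estimate: the image points accumulate at $\mathbf{0}$, and what replaces translation-invariant CZ theory there is the explicit fact that $R_{ij}$ applied to the symmetrized indicator of $\Omega_m$ is piecewise constant (Example \ref{example:Riesz}) together with the ``half-moon'' bounds from Lemma \ref{lem:C^circlealpha_quad}; those computations, not general CZ or Schauder theory, constitute the proof. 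The final reduction of the compactly supported case to the $C^\alpha\cap\mathring C^\alpha$ estimate, and the appeal to \cite{Grisvard1985,EJB}, are fine.
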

On the other hand, in the quadrant case $\Omega_2$, it has been well known that one cannot even get $C^0$ estimate: \begin{equation*}
\begin{split}
\Vert R_{ij} f\Vert_{C^0(\Omega_2)} \nleq C\Vert  f\Vert_{C^\alpha_c(\Omega_2)}.
\end{split}
\end{equation*} Indeed, using the definition of the Dirichlet Laplacian and \textit{assuming} that $R_{ij}f \in C^0$, \begin{equation*}
\begin{split}
&\partial_{x_2}(-\Delta_D)^{-1} f(0,x_2) = 0 \implies \partial_{x_2}\partial_{x_2}(-\Delta_D)^{-1} f(0,x_2) = 0, \\
&\partial_{x_1}(-\Delta_D)^{-1} f(x_1,0) = 0 \implies \partial_{x_1}\partial_{x_1}(-\Delta_D)^{-1} f(x_1,0) = 0.
\end{split}
\end{equation*} Hence \begin{equation*}
\begin{split}
R_{11}f, R_{22}f \in C^0 \implies f(0,0) = 0. 
\end{split}
\end{equation*} Our next lemma shows that the vanishing condition at the origin is the only obstruction for the sharp $C^\alpha$ estimate. \begin{lemma}\label{lem:2D-2}
	Let $f \in C^\alpha_c(\Omega_2)$ with $f(0,0)=0$ and $f = 0$ in $\{ r>R \}$ for some $R>0$. Then we have for any $1\le i,j\le2$, \begin{equation*}
	\begin{split}
	\nrm{R_{ij}f}_{C^\alpha(\Omega_2)} \le C(R) \nrm{f}_{C^\alpha(\Omega_2)}
	\end{split}
	\end{equation*} where $C(R)>0$ is a constant depending on the radius of the support of $f$.
\end{lemma}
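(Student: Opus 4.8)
\textbf{Proof proposal for Lemma \ref{lem:2D-2}.}

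The plan is to reduce the estimate on the quadrant $\Omega_2$ to the half-plane case by an explicit reflection, and then to isolate the single ``bad'' multiplier that survives in the kernel and kill it using the vanishing condition $f(\mathbf 0)=0$. Concretely, recall that $(-\lap_{\Omega_2})^{-1}f$ is obtained from $(-\lap)^{-1}$ on $\bbR^2$ applied to the extension $\tilde f$ of $f$ which is odd across both coordinate axes (the extension rule of the Introduction with $\calG=\calR_{2D}$). Thus $R_{ij}f = (R_{ij}\tilde f)|_{\Omega_2}$ where on the right $R_{ij}$ is the genuine (convolution) double Riesz transform on $\bbR^2$, which is bounded on $C^\alpha_c$ by the Calder\'on--Zygmund/Schauder theory. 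The only failure of continuity of $R_{ij}f$ up to $\partial\Omega_2$ can come from the corner $\mathbf 0$, since away from $\mathbf 0$ the boundary $\partial\Omega_2$ is a union of two half-lines meeting the earlier cases (or can be handled by the flat-boundary argument used for $m=3,4$). So it suffices to get a bound for $R_{ij}f$ in a neighborhood of $\mathbf 0$, say for $|x|\le 2R$.

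The key step is a careful expansion of $R_{ij}\tilde f(x)$ for $x\in\Omega_2$, $|x|$ small. Write $\tilde f = \sum_{\epsilon\in\{\pm1\}^2}(\epsilon_1\epsilon_2)\, f(\epsilon_1 y_1,\epsilon_2 y_2)$ and correspondingly split $R_{ij}\tilde f(x)$ into four integrals against kernels of the form $K_{ij}(x-\sigma y)$ with $\sigma=\mathrm{diag}(\epsilon_1,\epsilon_2)$, where $K_{ij}(z)=\rd_i\rd_j(-\tfrac1{2\pi}\log|z|)$ is the standard second-order Riesz kernel, homogeneous of degree $-2$ with mean zero on circles. For the term $\epsilon=(1,1)$ this is just the $\bbR^2$ transform of $f$ itself, which is fine. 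For the three ``reflected'' terms, the argument of the kernel is $x-\sigma y$ with $\sigma\ne\mathrm{Id}$, so $x$ and $\sigma y$ range over different quadrants and $|x-\sigma y|\gtrsim |x|+|y|$ on the support under consideration — no singularity, but the kernel only decays like $|y|^{-2}$, which is the source of the logarithmic/non-$L^\infty$ divergence. Here is where $f(\mathbf 0)=0$ enters: writing $f(y) = (f(y)-f(\mathbf 0)) = f(y)$ is not enough by itself, but combined with $\|f-f(\mathbf 0)\|_{L^\infty(B_\rho)}\le \rho^\alpha\|f\|_{C^\alpha_*}$ one gains an extra factor $|y|^\alpha$ near the origin, making $\int |K_{ij}(x-\sigma y)|\,|f(y)|\,dy$ convergent uniformly in $x$, with the $\log$ replaced by a constant depending on $R$. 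For the $C^\alpha_*$ seminorm one differences in $x$: the difference of kernels $K_{ij}(x-\sigma y)-K_{ij}(x'-\sigma y)$ is controlled by $|x-x'|\,( |x|+|y|)^{-3}$ for $|y|$ away from $|x-x'|$, and near $|y|\lesssim|x-x'|$ one uses the pointwise bound together with the $|y|^\alpha$ gain from the vanishing of $f$ at $\mathbf 0$; in both regimes the integral is $\lesssim |x-x'|^\alpha\|f\|_{C^\alpha}$. One must also handle the diagonal term $\epsilon=(1,1)$ near the boundary, but there the classical half-plane even/odd reflection estimates for $R_{ij}$ (again from Grisvard, as in Lemma \ref{lem:2D-1}) apply directly since reflecting across a single coordinate axis introduces no corner.

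Assembling: $\|R_{ij}f\|_{L^\infty(\Omega_2)}$ is bounded by the diagonal contribution (controlled by $\|f\|_{C^\alpha}$ via the half-plane result and the support bound, with a constant $C(R)$) plus the three reflected contributions, each $\lesssim C(R)\|f\|_{C^\alpha}$ using $f(\mathbf 0)=0$; similarly for $\|R_{ij}f\|_{C^\alpha_*(\Omega_2)}$. The main obstacle I expect is the bookkeeping in the reflected terms when $x$ and $x'$ are both close to $\mathbf 0$ and to $\partial\Omega_2$: one needs the cancellation of $K_{ij}$ on circles to avoid a spurious $\log$ in the $L^\infty$ bound even after using $f(\mathbf 0)=0$ — i.e. the gain must be exactly $|y|^\alpha$ and not merely $|y|^{0^+}$ — and to get the clean $C^\alpha_*$ bound one should split the $y$-integral at scale $|x-x'|$ and at scale comparable to $|x|$, tracking that all four resulting pieces close. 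It is precisely for this step that the hypothesis $f(\mathbf 0)=0$ is both used and, as the displayed computation before the lemma shows, necessary.
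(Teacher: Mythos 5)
There is a genuine gap in the treatment of the three ``reflected'' terms, and it propagates into the diagonal term as well. Your claim that for $\sigma\ne\mathrm{Id}$ one has $|x-\sigma y|\gtrsim |x|+|y|$ is false for the two single-axis reflections: with $\sigma=\mathrm{diag}(1,-1)$, i.e. $\sigma y=\hat y=(y_1,-y_2)$, the quantity $|x-\hat y|$ tends to zero as $x$ and $y$ approach the same point of the positive $x_1$-axis. Consequently the absolute-value estimate you propose does not close: for $x=(1,\delta)$ one computes
\begin{equation*}
\int_{\Omega_2}|K_{ij}(x-\hat y)|\,|y|^\alpha\,dy \;\gtrsim\; \int_{\delta<\rho<1/4}\frac{d\rho}{\rho}\;=\;\ln\frac{1}{4\delta}\;\longrightarrow\;\infty \quad (\delta\to 0),
\end{equation*}
because near the flat part of the boundary one has $|y|\approx|x|\approx 1$, so the factor $|y|^\alpha$ gained from $f(\mathbf{0})=0$ gives no help there; the vanishing at the origin only controls the corner. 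For the same reason the ``diagonal'' term is not covered by Calder\'on--Zygmund/Schauder theory: it is the Riesz transform of $f\,\mathbf{1}_{\Omega_2}$, which is discontinuous across $\partial\Omega_2$, and the classical half-plane estimate you invoke is a statement about the \emph{sum} of the diagonal term and one reflected term, not about either separately. In short, the four pieces of your decomposition are individually unbounded near the flat boundary, and only their combination — i.e. the kernel cancellation (mean zero on circles, the ``half-moon'' computation) applied to the full symmetrized kernel together with the subtraction $f(y)-f(x)$ — is controlled.

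What the paper does instead is to keep the four kernels together as $\tilde{K}(x,y)=K(x,y)-K(x,\hat y)+K(x,-y)-K(x,-\hat y)$, write $A(x)=I(x)+J(x)$ with $I$ carrying $f(y)-f(x)$ (estimated by $\int|x-y|^{\alpha-2}dy$) and $J(x)=f(x)\int\tilde K(x,y)\chi(y)\,dy$, and then invoke the explicit Bahouri--Chemin computation (Corollary \ref{cor:BC}): the integral equals $c\ln|x|+F(x)$ with $F$ bounded and in $\mathring{C}^\alpha$, so that $|f(x)|\le\nrm{f}_{C^\alpha_*}|x|^\alpha$ annihilates the logarithm. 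This explicit identification of the residual singularity — not just the knowledge that ``some cancellation on circles is needed'' — is also what drives the $C^\alpha_*$ estimate, via the bounds $|f(x)(F(x')-F(x))|\lesssim\nrm{f}_{C^\alpha}|x-x'|^\alpha$ and $|f(x)\ln(|x'|/|x|)|\lesssim\nrm{f}_{C^\alpha}|x-x'|^\alpha$, together with a uniform half-moon bound on $\int_{D}\tilde K(x',y)\chi(y)\,dy$ over the near-diagonal region $D$. Your proposal correctly locates where $f(\mathbf{0})=0$ must be used, but without isolating the $c\ln|x|+F$ structure of the symmetrized kernel acting on constants, the estimates as written fail at the flat boundary and do not yield the $C^\alpha_*$ seminorm bound.
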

The final lemma concerns functions which are odd in $\bbR^2$ and $C^\alpha$ when restricted to the sectors. For convenience we shall define \begin{equation*}
\begin{split}
\Omega_m^i = \{ (r,\theta) : (i-1)\frac{\pi}{m} < \theta < i\frac{\pi}{m} \}, \quad m \ge 3, 1 \le i \le 2m. 
\end{split}
\end{equation*} We have in particular that $\Omega_m=\Omega_m^1$. \begin{lemma}\label{lem:2D-3}
	Let $f \in L^\infty(\bbR^2)$ satisfy $f(x)=-f(-x)$ for all $x \in \bbR^2$. Furthermore, assume that $f \in C^\alpha\cap\mathring{C}^\alpha(\Omega_m^i)$ for some $m \ge 3$ and all $1 \le i \le 2m$. Then for all $1\le i,j\le2$ and $R_{ij} = \rd_{x_i}\rd_{x_j}(-\lap_{\bbR^2})^{-1}$, we have \begin{equation*}
	\begin{split}
	\Vert R_{ij}f\Vert_{L^\infty(\Omega_m)} \le C\nrm{f}_{L^\infty(\bbR^2)} \log(10 + \frac{\Vert f\Vert_{\mathring{C}^\alpha(\Omega_m)}}{\nrm{f}_{L^\infty(\bbR^2)}} ),
	\end{split}
	\end{equation*} \begin{equation*}
	\begin{split}
	\Vert R_{ij}f\Vert_{\mathring{C}_*^\alpha(\Omega_m)} \le C\sum_{1 \le i \le m} \Vert f\Vert_{\mathring{C}_*^\alpha(\Omega_m^i)},\quad \Vert R_{ij}f\Vert_{C_*^\alpha(\Omega_m)} \le C\sum_{1 \le i \le m} \Vert f\Vert_{C_*^\alpha(\Omega_m^i)}.
	\end{split}
	\end{equation*}  In particular, we have \begin{equation*}
\begin{split}
\nrm{R_{ij}f}_{ C^\alpha\cap\mathring{C}^\alpha(\Omega_m) } \le C \sum_{1 \le i \le m}\nrm{f}_{C^\alpha\cap\mathring{C}^\alpha(\Omega_m^i)}.
\end{split}
\end{equation*} 
\end{lemma}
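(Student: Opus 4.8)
\textbf{Proof strategy for Lemma \ref{lem:2D-3}.}

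The plan is to reduce the estimate for the odd function $f$ on $\bbR^2$ to the two preceding lemmas by splitting $f$ according to the sectors $\Omega_m^i$. First I would write $f = \sum_{1\le i\le 2m} f_i$ where $f_i = f\cdot\mathbf 1_{\Omega_m^i}$ (ignoring the measure-zero set of sector boundaries). Because $f$ is odd, the antipodal sector $\Omega_m^{i+m}$ satisfies $f_{i+m}(x) = -f_i(-x)$, so it suffices to understand the contribution of each pair $\{f_i, f_{i+m}\}$ to $R_{ij}f$ on the target sector $\Omega_m = \Omega_m^1$. The key point is that the double Riesz kernel $K_{ij}(x-y) = \rd_{x_i}\rd_{x_j}(-\lap_{\bbR^2})^{-1}$ is even in its argument, so the contribution of $f_i + f_{i+m}$ to $R_{ij}f(x)$ is $\int (K_{ij}(x-y) - K_{ij}(x+y)) f_i(y)\, dy$, i.e.\ an \emph{odd} extension of the kernel restricted to $\Omega_m^i$.

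Next I would distinguish two cases depending on how far the source sector $\Omega_m^i$ is from the target sector $\Omega_m^1$. If $\Omega_m^i$ is \emph{not adjacent} to $\Omega_m^1$ (and not the antipode of an adjacent sector), then for $x\in\Omega_m^1$ and $y\in\Omega_m^i\cup\Omega_m^{i+m}$ we have $|x-y|\gtrsim \max(|x|,|y|)\sin(\pi/m)$, so the kernel is smooth on the relevant region and the bound is immediate (this is where $m\ge 3$ is used — one needs the sectors to have opening angle $<\pi$ so that a sector and its neighbor together do not fill a half-plane, and so that there genuinely exist non-adjacent sectors; for such terms only the $L^\infty$ and $\mathring C^\alpha$ norms of $f$ on the far sector enter, with a constant that is in fact summable). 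The interesting contributions come from $i=1$ itself and from the sectors adjacent to $\Omega_m^1$ (and their antipodes). For $i=1$: by a rotation sending $\Omega_m^1$ to a standard sector, the operator $f_1\mapsto (R_{ij}f)|_{\Omega_m^1}$ with the odd-reflected kernel is, up to a change of variables, exactly $\rd\rd(-\lap_D)^{-1}$ on the sector $\Omega_m$ of angle $\pi/m\le\pi/3$, so Lemma \ref{lem:2D-1} applies directly and gives the claimed bounds in terms of $\nrm{f}_{C^\alpha\cap\mathring C^\alpha(\Omega_m^1)}$. For a sector adjacent to $\Omega_m^1$, the union of $\Omega_m^1$ with that neighbor is again a sector of angle $2\pi/m\le 2\pi/3<\pi$, and the odd reflection in the shared edge turns the contribution into a Dirichlet problem on that larger sector — so again Lemma \ref{lem:2D-1} (applied with $m$ replaced by $m/2\ge 3/2$, i.e.\ angle $2\pi/m$, which is still $\le 2\pi/3$, hence covered by the same Grisvard-type estimate for angles $<\pi$) gives the bound in terms of the norms of $f$ on $\Omega_m^1$ and on the neighbor. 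The antipodal sectors contribute through $K_{ij}(x+y)$, which for $x,y$ in a fixed sector has $|x+y|\gtrsim\max(|x|,|y|)$, hence is a smooth-kernel term handled as in the non-adjacent case.

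The main obstacle I anticipate is the case $m=3$, where the sectors have the \emph{largest} opening angle $\pi/3$ among those considered and, more to the point, a sector together with a neighbor spans $2\pi/3$, which is comfortably below $\pi$ but is the worst case for the constants; one must make sure the Grisvard/explicit estimates quoted in Lemma \ref{lem:2D-1} really do apply to all angles $\le 2\pi/3$ and not merely to $\pi/3,\pi/4$ — the point being that $2\pi/3 = \pi/(3/2)$ is not of the form $\pi/(\text{integer})$, so one cannot literally cite $\Omega_m$ with integer $m$, but the underlying estimate (no resonance unless the angle is $\ge\pi$, with the sole obstruction being the corner value for angle exactly dividing into $\pi$ evenly — which for $2\pi/3$ does not even have the vanishing issue since $2\pi/3\ne\pi/k$) still goes through. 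Collecting the finitely many sector contributions — one "self" term, two "adjacent" terms, and the remaining "far" terms with summable constants — and summing yields the stated inequality with the sum over $1\le i\le m$ on the right-hand side (the antipodal sectors $m+1,\dots,2m$ contribute the same norms as $1,\dots,m$ by oddness, so the sum can be taken over just $m$ sectors). The $L^\infty$ bound with the logarithm is inherited verbatim from the $i=1$ and adjacent terms via Lemma \ref{lem:2D-1}, since the far terms contribute only $O(\nrm{f}_{L^\infty})$.
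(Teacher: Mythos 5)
Your overall skeleton --- splitting $f$ into antipodal pairs $f\mathbf{1}_{\Omega_m^i}+f\mathbf{1}_{\Omega_m^{i+m}}$, using the evenness of the kernel so that each pair contributes $\int (K(x-y)-K(x+y))f(y)\,dy$ with a difference kernel decaying like $|y|^{-3}$ at infinity, and disposing of non-adjacent pairs by support separation --- is sound and matches the paper's reduction to $f$ supported on $\Omega_4^1\cup\Omega_4^5$. The gap is in the ``self'' and ``adjacent'' cases. The contribution of a single antipodal pair is \emph{not} $\rd_{x_i}\rd_{x_j}(-\lap_D)^{-1}$ on the sector: the Dirichlet Green's function of a sector of angle $\pi/m$ is built by the method of images from all $2m$ reflections/rotations of $y$ under the dihedral group generated by the two edges, of which the antipodal image $-y$ is at most one (for $m$ odd, $-\mathrm{Id}$ is not even a group element). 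The missing images include the reflections of $y$ across the two edges of $\Omega_m^1$; their contributions are genuinely singular up to those edges and cannot be absorbed into a smooth-kernel remainder, so Lemma \ref{lem:2D-1} cannot be invoked for the antipodal pair as you claim. The adjacent-sector step has the same problem in a starker form: the hypothesis gives $f$ no reflection symmetry across sector edges whatsoever (the motivating example is $\mathbf{1}_{\Omega_4^1}-\mathbf{1}_{\Omega_4^5}$, which jumps across every edge of $\Omega_4^1$), so ``the odd reflection in the shared edge turns the contribution into a Dirichlet problem on the sector of angle $2\pi/m$'' does not correspond to any identity --- $f|_{\Omega_m^2}$ is simply unrelated to $f|_{\Omega_m^1}$.

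What closes the argument in the paper is a direct singular-integral estimate on each antipodal pair: one subtracts $f(x)\,\tilde{\mathbf{1}}_{\Omega_m}(y)$ with $\tilde{\mathbf{1}}_{\Omega_m}=\mathbf{1}_{\Omega_m^1}-\mathbf{1}_{\Omega_m^{1+m}}$, uses the pointwise bound $|f(y)-f(x)\tilde{\mathbf{1}}_{\Omega_m}(y)|\le \nrm{f}_{C^\alpha_*}|x-y|^\alpha$ (valid for $y$ in either sector of the pair, by the antipodal oddness), and then invokes the explicit computation of Example \ref{example:Riesz} that $R_{ij}\tilde{\mathbf{1}}_{\Omega_m}$ is piecewise \emph{constant}. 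That computation is the cancellation which plays the role you are asking Lemma \ref{lem:2D-1} to play, and it is exactly what fails for $\mathbf{1}_{\Omega_m^1}$ without its antipode (logarithmic singularity at the origin, cf.\ the Remark after the lemma). Your approach could in principle be repaired by writing the antipodal-pair operator as the Dirichlet operator minus the remaining $2m-2$ image contributions and estimating the two edge-adjacent image terms separately, but as written the reduction to Lemma \ref{lem:2D-1} is not valid.
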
 \begin{remark}
Without the odd symmetry, the $L^\infty$ bound fails. A simple example is provided by the indicator function $\textbf{1}_{\Omega_4}$ with a radial cut-off. Explicit computations (cf. \cite{Bertozzi1991,SVP1,CS}) show that the double Riesz transforms have logarithmic singularity at the origin. The above lemma shows that the situation becomes completely different when one considers the odd symmetrization, which is simply $\textbf{1}_{\Omega_4^1} - \textbf{1}_{\Omega_4^5}$. 
\end{remark}

\subsubsection{A few explicit computations}\label{subsubsec:explicit}

The following explicit computation will clarify when we can expect H\"older estimates for the double Riesz transforms for functions defined in $\Omega_2$. Given a function $f$ on $\Omega_2$, we denote $\tilde{f}$ to be its odd-odd extension onto all of $\mathbb{R}^2$: that is, $\tilde{f} \equiv f $ on $\Omega_2$ and $\tilde{f}(x_1,-x_2) = -\tilde{f}(x_1,x_2) = \tilde{f}(-x_1,x_2)$ for all $x_1, x_2 \in \mathbb{R}$.\footnote{Strictly speaking $\tilde{f}$ is not well-defined on the axes but this will not cause any issues.}
 
\begin{example}
	We take the following function on $\Omega_2$ in polar coordinates: \begin{equation*}
	\begin{split}
	f(x) = r^\alpha \chi(r) \sin(m\theta)
	\end{split}
	\end{equation*} for some $0 \le \alpha \le 1$ and $m\ge 2$ even. Here $\chi$ is a smooth cutoff function satisfying $\chi(r)=1$ for $r\le1$ and $0$ for $r\ge2$. Once we make the ansatz \begin{equation*}
	\begin{split}
	\Delta^{-1}_{\Omega_2}f = \psi(r)\sin(m\theta),
	\end{split}
	\end{equation*} (note that this satisfies the Dirichlet boundary condition on $\partial \Omega_2$) then from \begin{equation*}
	\begin{split}
	\Delta = \frac{1}{r^2}\rd_{\theta\theta} + \rd_{rr} + \frac{1}{r}\rd_r,
	\end{split}
	\end{equation*}  we obtain an ordinary differential equation for $\psi$: \begin{equation*}
	\begin{split}
	\psi'' + \frac{1}{r}\psi' - \frac{m^2}{r^2}\psi = r^\alpha\chi(r). 
	\end{split}
	\end{equation*} Together with boundary conditions $\psi(0) = \psi'(0) = 0$, the solution is unique. We consider the region $r \le 1$ and it is easy to see that \begin{equation}\label{eq:stream_answer}
	\begin{split}
	\psi(r) = \frac{1}{(2+\alpha)^2 - m^2} r^{2+\alpha}
	\end{split}
	\end{equation} is the solution whenever the denominator is nonzero. In the special case when $m = 2$ and $\alpha = 0$, the solution is instead given by \begin{equation}\label{eq:stream_answer_2}
	\begin{split}
	\psi(r) = \frac{1}{4}r^2\ln r. 
	\end{split}
	\end{equation} Note that this expression can be obtained by fixing some $0 < r \le 1$ in \eqref{eq:stream_answer} with $m = 2$ and taking the limit $\alpha \rightarrow 0^+$. Hence, we see that while the function $\sin(2\theta)\chi(r)$ belongs to $\mathring{C}^\alpha(\Omega_2)$ with any $0 < \alpha \le 1$, $$G(x):= \rd_{12} \Delta^{-1}_{\Omega_2}(\sin(2\theta)\chi(r)) \notin L^\infty_{loc}(\Omega_2).$$ On the other hand, one can see that since \begin{equation}\label{eq:G}
	\begin{split}
	G = \frac{1}{4}\ln|x| - \frac{1}{2} \frac{x_1^2x_2^2}{|x|^4} + \tilde{F}
	\end{split}
	\end{equation} with $\tilde{F} \in C^1(\Omega_2)$, $G$ has a scale-invariant H\"older regularity in the sense that  \begin{equation}\label{eq:G2}
	\begin{split}
	\sup_{ x \ne 0, |x|\le |x'|} |x|^\alpha \frac{|G(x)-G(x')|}{|x-x'|^\alpha}<+\infty 
	\end{split}
	\end{equation} for all $0 < \alpha \le 1$. In the case $\alpha=1$ this is trivial from $|r \nb(\ln r)| \lesssim 1$. Similarly, we see that up to a bounded and smooth term denoted by $\tilde{F}_{11}$, \begin{equation*}
	\begin{split}
	\rd_{11} \Delta^{-1}_{\Omega_2}(\sin(2\theta)\chi(r)) = \frac{x_1x_2}{|x|^2}\left(2+  \frac{x_2^2-x_1^2}{|x|^2}\right) + \tilde{F}_{11}. 
	\end{split}
	\end{equation*} A similar computation can be done for $\rd_{11}$ replaced with $\rd_{22}$ as well. In particular, $\rd_{11} \Delta^{-1}_{\Omega_2}(\sin(2\theta)\chi(r))$ and $\rd_{22} \Delta^{-1}_{\Omega_2}(\sin(2\theta)\chi(r))$ has the same scale-invariant H\"older regularity as in \eqref{eq:G2}. This property will be fundamental in what follows. 
	
	%Next, for any $0 < \alpha < 1$, one can see that all possible second derivatives of $\Delta^{-1}(r^\alpha\chi(r)\sin(2\theta))$ is bounded and actually $C^\alpha(\Omega_2)$-functions.  In the following lemmas we shall make these observations precise. 
\end{example}

We are now ready to perform explicit computations for the Bahouri-Chemin function, which is directly relevant for the estimate in Lemma \ref{lem:2D-2}. 
\begin{lemma}[Estimates for the Bahouri-Chemin]
	Consider the function \begin{equation*}
	\begin{split}
	g(x_1,x_2) = \chi(|x|)\mathrm{sgn}(x_1x_2),
	\end{split}
	\end{equation*} where $\chi(|x|)$ is a smooth cutoff function satisfying $\chi = 1$ on $|x|\le1$ and $\chi = 0$ on $|x| \ge 2$. Then, the function $h$ defined by \begin{equation*}
	\begin{split}
	h(x) := g(x) - 2\chi(|x|)\frac{x_1x_2}{|x|^2}
	\end{split}
	\end{equation*} satisfies \begin{equation*}
	\begin{split}
	\Vert R_{ij}h\Vert_{\mathring{C}^1(\Omega_2)} \lesssim 1.
	\end{split}
	\end{equation*} Moreover, for any $0 < \alpha < 1$, \begin{equation*}
	\begin{split}
	\Vert R_{ij}(|x|^\alpha g) \Vert_{C^\alpha_{loc}(\Omega_2)} \lesssim_\alpha 1. 
	\end{split}
	\end{equation*}
\end{lemma}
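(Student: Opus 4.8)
The plan is to reduce both estimates to explicit computations of the stream function on the quarter-plane $\Omega_2$ with the Dirichlet boundary condition, exactly as in the preceding Example, and then to extract the worst part of $R_{ij}$ explicitly, leaving a manifestly smooth (respectively $C^\alpha$) remainder. The key observation is that $g$ and $|x|^\alpha g$ are, up to the radial cutoff $\chi$, scale-invariant (or homogeneous of degree $\alpha$) functions of the angle only: $\mathrm{sgn}(x_1 x_2)$ restricted to $\Omega_2$ is the constant $1$, so in angular Fourier series on $(0,\pi/2)$ it is a fixed combination of $\sin(2k\theta)$, $k\ge 1$. The odd-odd extension $\tilde g$ on $\bbR^2$ is the Bahouri-Chemin datum. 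Writing $(-\lap_{\Omega_2})^{-1}g = \psi(r)\sin(2\theta) + (\text{higher harmonics})$ and solving the ODE $\psi'' + \tfrac1r\psi' - \tfrac{4}{r^2}\psi = \chi(r)$ as in \eqref{eq:stream_answer_2}, the $k=1$ harmonic contributes $\tfrac14 r^2\ln r\,\sin(2\theta)$ near the origin, while all harmonics $k\ge 2$ contribute $\tfrac{r^2}{4-(2k)^2}\sin(2k\theta)$, which is $C^\infty$ up to the corner (the denominators never vanish for $k\ge2$). Hence, up to a $C^1$ error, $(-\lap_{\Omega_2})^{-1}g$ near $\mathbf{0}$ equals $c\,x_1 x_2\ln|x|$ plus a smooth homogeneous-degree-two function, and differentiating twice shows that $R_{ij}g - (\text{smooth})$ is exactly the logarithmically singular pieces displayed in \eqref{eq:G} and the line below it: $\tfrac14\ln|x|$ for $R_{12}$, and $\tfrac{x_1x_2}{|x|^2}(2 + \tfrac{x_2^2-x_1^2}{|x|^2})$ (and its analogue) for $R_{11},R_{22}$.

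For the first claim, one then computes $R_{ij}h = R_{ij}g - 2R_{ij}(\chi\,x_1x_2/|x|^2)$. The point is that the homogeneous-degree-zero function $2x_1x_2/|x|^2 = \sin(2\theta)$ has been chosen precisely so that its double Riesz transform cancels the leading singular terms of $R_{ij}g$: indeed $2\chi(|x|)x_1x_2/|x|^2$ is, near the origin, the unique degree-zero profile whose $(-\lap_{\Omega_2})^{-1}$ produces the resonant $r^2\ln r\,\sin(2\theta)$ term with the matching coefficient, as witnessed by the $\alpha\to 0^+$ limit of \eqref{eq:stream_answer}. After the cancellation, what remains in $R_{ij}h$ is built from the smooth higher-harmonic part of $\psi$, from the commutator between $\chi$ and the singular integral (which is smooth since $\chi$ is smooth and the kernels are smooth away from the diagonal, and the far field $|x|\ge 1$ contributes a smooth tail), and from the difference between the Dirichlet Green's function on $\Omega_2$ and the whole-space kernel applied to the odd-odd extension — all of which are $C^\infty$ on $\overline{\Omega_2}$, giving $\Vert R_{ij}h\Vert_{\mathring C^1(\Omega_2)}\lesssim 1$ (in fact $C^1$, hence also $\mathring C^1$).

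For the second claim one repeats the computation with $g$ replaced by $|x|^\alpha g = r^\alpha\chi(r)\,\mathrm{sgn}(x_1x_2)$, whose $k$-th angular harmonic solves $\psi'' + \tfrac1r\psi' - \tfrac{(2k)^2}{r^2}\psi = r^\alpha\chi(r)$; by \eqref{eq:stream_answer} the solution is $\tfrac{1}{(2+\alpha)^2-(2k)^2}r^{2+\alpha}\sin(2k\theta)$ near the origin, and now for \emph{every} $k\ge 1$ the denominator $(2+\alpha)^2-(2k)^2$ is nonzero (since $0<\alpha<1$ forces $2+\alpha\in(2,3)$, which is strictly between the resonances $2k=2$ and $2k=4$), so there is no logarithmic term at all. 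Thus $(-\lap_{\Omega_2})^{-1}(|x|^\alpha g)$ is, near $\mathbf 0$, a sum (convergent, since the coefficients decay like $k^{-2}$ and we can gain angular smoothness of the datum at the cost of the same polynomial factor) of terms $c_k r^{2+\alpha}\sin(2k\theta)$, each of which is $C^{1,\alpha}$ up to the corner; differentiating twice in $x$ lands in $C^\alpha_{loc}(\Omega_2)$ with a norm bounded by $\sup_k |c_k|\cdot$(geometric series) $\lesssim_\alpha 1$, and the cutoff commutator and far-field pieces are again smooth. (Alternatively, and more in the spirit of Lemma \ref{lem:2D-2} which this lemma is meant to feed, one notes $|x|^\alpha g \in C^\alpha_c(\Omega_2)$ with $(|x|^\alpha g)(\mathbf 0)=0$, so the bound $\Vert R_{ij}(|x|^\alpha g)\Vert_{C^\alpha_{loc}}\lesssim_\alpha 1$ is the prototype case of that lemma; but the explicit ODE gives the cleaner self-contained argument here.)

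The main obstacle is handling the angular Fourier series carefully: one must verify that the $\ell^\infty$-in-$k$ bound on the rescaled coefficients $c_k$ together with summability (using a little angular smoothing of $\mathrm{sgn}(x_1x_2)\chi$, which is $C^\infty$ away from the origin, to make the series of second derivatives converge in $C^\alpha$) genuinely controls the $C^\alpha_*$-seminorm of the second derivatives up to the corner, and that the resonant $k=1$ term is \emph{precisely} cancelled by the subtracted profile $2\chi x_1 x_2/|x|^2$ — including checking that no residual logarithm survives in $R_{11}h$ and $R_{22}h$, where the relevant singular profile is $\tfrac{x_1x_2}{|x|^2}(2+\tfrac{x_2^2-x_1^2}{|x|^2})$ rather than $\ln|x|$. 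This is exactly the content of the Example preceding the lemma, so the heavy lifting there can be quoted.
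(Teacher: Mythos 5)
Your strategy coincides with the paper's: expand in angular harmonics, solve the radial ODE for each mode via \eqref{eq:stream_answer}, observe that the subtracted profile $2\chi(|x|)x_1x_2/|x|^2=\chi(|x|)\sin(2\theta)$ removes exactly the resonant mode, and sum the non-resonant modes. But the way you close both estimates has a genuine gap, and it is located precisely at the step the paper spends its effort on. For the first claim, the remainder after removing the resonant harmonic is \emph{not} $C^\infty$ (nor $C^1$) on $\overline{\Omega_2}$: for each non-resonant mode $m\ge 6$ the stream function near the origin is $\frac{r^2}{m^2-4}\sin(m\theta)$, whose second derivatives are homogeneous of degree zero and hence discontinuous at the corner (only the $m=2$ mode has a polynomial stream function $2x_1x_2$). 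What must actually be shown is that the \emph{angular profile} of the sum of these degree-zero pieces is $C^1$ in $\theta$ on $[0,\pi/2]$ — that is exactly the $\mathring{C}^1$ statement, and it is not free: the square-wave coefficients decay only like $1/m$ while two derivatives cost a factor $m^2/(m^2-4)=O(1)$, so the series of second derivatives is not absolutely convergent. The paper sums it by exploiting the explicit structure $\frac{cm^2}{m^2-4}\bigl(\cos((m+2)\theta)-\cos(m\theta)\bigr)+O(1/m)$ of each term. Your ``all of which are $C^\infty$, in fact $C^1$'' step asserts the conclusion without this argument.

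The same summability issue defeats your second argument as written: after term-by-term double differentiation the coefficients are of size $\frac{1}{2k+1}\cdot\frac{(2k+1)^2}{|(2+\alpha)^2-4(2k+1)^2|}\sim k^{-1}$, which is neither geometric nor summable, and one cannot ``gain angular smoothness of the datum'' since $\mathrm{sgn}(x_1x_2)$ is a fixed jump function of $\theta$. The paper's resolution is different and worth noting: it computes $\frac{\rd_{\theta\theta}}{r^2}\Delta^{-1}(|x|^\alpha g)$, which by the polar form of $\Delta$ equals $|x|^\alpha g$ minus a series whose coefficients decay like $k^{-3}$ and is therefore manifestly $r^\alpha$ times a smooth angular function; the remaining second derivatives are then recovered from this and the equation. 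Finally, your fallback of invoking Lemma \ref{lem:2D-2} is circular: that lemma's proof relies on Corollary \ref{cor:BC}, which is deduced from the present lemma.
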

\begin{proof}
	We note that \begin{equation*}
	\begin{split}
	\mathrm{sgn}(x_1x_2) = \sum_{k \ge 0} \frac{1}{(2k+1)} \sin(2(2k+1)\theta),
	\end{split}
	\end{equation*} and therefore \begin{equation*}
	\begin{split}
	h(x) = \chi(|x|) \cdot  \sum_{k \ge 1} \frac{1}{(2k+1)} \sin(2(2k+1)\theta) =: \chi(|x|) A(\theta).
	\end{split}
	\end{equation*} For $m \ge 3$, we explicitly have that \begin{equation*}
	\begin{split}
	(-\Delta)^{-1}_{\bbR^2}(\sin(m\theta)) = \frac{r^2}{m^2-4}\sin(m\theta).
	\end{split}
	\end{equation*} We compute that \begin{equation*}
	\begin{split}
	\rd_{x_1}\rd_{x_2}(-\Delta)^{-1}(\sin(m\theta)) = \frac{cm^2}{m^2-4}\left( \cos((m+2)\theta) - \cos(m\theta) \right) + O(\frac{1}{m}).
	\end{split}
	\end{equation*} From the above it is not difficult to see that, recalling the definition of $A$, \begin{equation*}
	\begin{split}
	\left(\rd_{x_1}\rd_{x_2}(-\Delta)^{-1} A\right)(\theta) \in C^1([0,\pi/2]),
	\end{split}
	\end{equation*} and therefore we deduce that $h \in \mathring{C}^1(\Omega_2). $
	
	To prove the second statement, we may only consider the region $r \le 1/2$, and note that \begin{equation*}
	\begin{split}
	\Delta^{-1}(|x|^\alpha g) = \sum_{k \ge 0} \frac{1}{2k+1} \frac{\sin( 2(2k+1)\theta )}{(2+\alpha)^2 - 4(2k+1)^2}  r^{2+\alpha}.
	\end{split}
	\end{equation*} Then, \begin{equation*}
	\begin{split}
	\frac{\rd_{\theta\theta}}{r^2} \Delta^{-1}(|x|^\alpha g) &= - \sum_{k \ge 0} \frac{1}{2k+1} \frac{4(2k+1)^2}{(2+\alpha)^2 - 4(2k+1)^2}  \sin( 2(2k+1)\theta )r^{ \alpha} \\
	&= |x|^\alpha g - \sum_{k \ge 0} \frac{1}{2k+1} \frac{(2+\alpha)^2}{(2+\alpha)^2 - 4(2k+1)^2} \sin( 2(2k+1)\theta )r^{ \alpha}.
	\end{split}
	\end{equation*} From the last expression it is clear that \begin{equation*}
	\begin{split}
	\left\Vert \frac{\rd_{\theta\theta}}{r^2} \Delta^{-1}(|x|^\alpha g)  \right\Vert_{C^\alpha(\Omega_2 \cap \{ |x| \le 1/2 \})} \lesssim_\alpha 1. 
	\end{split}
	\end{equation*} Other second order derivatives can be treated in a similar way. 
\end{proof}
Combining the previous lemma with the example above, we conclude the following: \begin{corollary}\label{cor:BC}
	We have \begin{equation*}
	\begin{split}
	\nrm{R_{ij}(\chi(|x|)\mathrm{sgn}(x_1x_2)) - c \ln |x| }_{\mathring{C}^1(\Omega_2)} \lesssim 1,
	\end{split}
	\end{equation*} where $c = \frac{1}{4}$ for $R_{12}$ and $c=0$ for $R_{11}$ and $R_{22}$. 
\end{corollary}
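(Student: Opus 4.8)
The plan is to deduce this as a bookkeeping consequence of the two preceding results — the lemma on the Bahouri–Chemin function and the explicit $m=2,\ \alpha=0$ computation carried out in the Example above — by splitting off the lowest angular mode of $g:=\chi(|x|)\mathrm{sgn}(x_1x_2)$. Since $\frac{x_1x_2}{|x|^2}=\cos\theta\sin\theta=\tfrac12\sin(2\theta)$, we have $2\chi(|x|)\frac{x_1x_2}{|x|^2}=\chi(|x|)\sin(2\theta)$, so we may write $g=h+\chi(|x|)\sin(2\theta)$ where $h=g-2\chi(|x|)\frac{x_1x_2}{|x|^2}$ is exactly the function handled in the previous lemma. (Throughout, $R_{ij}$ acts on the odd–odd extension, which $g$ and $\chi(|x|)\sin(2\theta)$ already are on $\bbR^2$; equivalently one uses $(-\lap_D)^{-1}$ on $\Omega_2$, the two agreeing by reflection, and $\mathrm{sgn}(x_1x_2)$ restricted to $\Omega_2$ is the constant $1$.) By linearity of $R_{ij}$, $R_{ij}g=R_{ij}h+R_{ij}(\chi(|x|)\sin(2\theta))$, and the previous lemma gives $\nrm{R_{ij}h}_{\mathring{C}^1(\Omega_2)}\lesssim1$ for all $1\le i,j\le2$.

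It remains to control $R_{ij}(\chi(|x|)\sin(2\theta))$, for which I would quote the Example: for $\rd_{12}$ one obtains $c\ln|x|$ plus a term that is $0$-homogeneous and smooth on $\Omega_2\setminus\{\mathbf 0\}$ plus a $C^1$ remainder, with $c=\tfrac14$; for $\rd_{11}$ and $\rd_{22}$ the explicit formulas there contain no logarithm, only a $0$-homogeneous smooth-away-from-origin term and a $C^1$ remainder, so $c=0$. The one elementary point to record is that any $0$-homogeneous function $\Phi$ smooth on $\Omega_2\setminus\{\mathbf 0\}$, and any $\tilde F\in C^1$ with bounded support, belong to $\mathring{C}^1(\Omega_2)$ with norm $\lesssim1$: for the former, $|x|\Phi(x)$ is $1$-homogeneous with bounded gradient hence globally Lipschitz, so $\nrm{\Phi}_{\mathring{C}^1_*}<\infty$; for the latter, $\nb(|x|\tilde F)=\hat x\,\tilde F+|x|\nb\tilde F$ is bounded on $\supp\tilde F$. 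Subtracting $c\ln|x|$ therefore leaves a function in $\mathring{C}^1(\Omega_2)$ with norm $\lesssim1$, and combining with the bound for $R_{ij}h$ gives $\nrm{R_{ij}g-c\ln|x|}_{\mathring{C}^1(\Omega_2)}\lesssim1$, which is the assertion.

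Since both ingredients are already in hand, there is essentially no real obstacle here; the only care needed is the constant bookkeeping — correctly tracking the sign and the value of $c$ through the relation between $\lap^{-1}_{\Omega_2}$ as used in the Example and $(-\lap_D)^{-1}$ in the definition of $R_{ij}$ — together with the routine observation that $0$-homogeneous and $C^1$ remainders lie in the scale-invariant space $\mathring{C}^1$.
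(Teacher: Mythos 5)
Your proposal is correct and is exactly the argument the paper intends: the corollary is stated immediately after the sentence ``Combining the previous lemma with the example above, we conclude the following,'' and your decomposition $g=h+\chi(|x|)\sin(2\theta)$ together with the observation that $0$-homogeneous smooth functions and $C^1$ remainders lie in $\mathring{C}^1$ is precisely that combination, spelled out. The one sign you flag (whether the relation $R_{12}=-\rd_{12}\Delta_{\Omega_2}^{-1}$ makes $c=\tfrac14$ or $c=-\tfrac14$) is indeed the only bookkeeping point, and it is immaterial for the later use of the corollary in the proof of Lemma \ref{lem:2D-2}, where only the form $c\ln|x|+F$ with $F\in\mathring{C}^\alpha$ is needed.
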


We now consider the simplest functions which satisfy the assumptions of Lemma \ref{lem:2D-3}, namely piecewise constant functions. We explicitly show that their double Riesz transforms are given again by piecewise constants, which must be if Lemma \ref{lem:2D-3} were valid. 
\begin{example}\label{example:Riesz}
	We now consider the following functions on $\bbR^2$: \begin{equation*}
	\begin{split}
	\tilde{{\bf 1}}_{R_4} := \sum_{i=1}^{8} (-1)^{i-1} \mathbf{1}_{\Omega_4^i},
	\end{split}
	\end{equation*} \begin{equation*}
	\begin{split}
	\tilde{{\bf 1}}_{\Omega_4} :=  \mathbf{1}_{\Omega_4^1}-\mathbf{1}_{\Omega_4^5}.
	\end{split}
	\end{equation*} Using the definition of the principal value integral, one can explicitly compute that for $1\le i,j\le2$, $R_{ij}\tilde{{\bf 1}}_{R_4}$ is given by piecewise constant functions depicted in Figure \ref{fig:Riesz1}. The same holds for $R_{ij}\tilde{{\bf 1}}_{\Omega_4}$, which is depicted in Figure \ref{fig:Riesz2}. In both cases, one can verify that $R_{11}+R_{22}=-\mathrm{Id}$. To illustrate how one can obtain these facts, we consider the case $R_{12}$ applied to $\tilde{{\bf 1}}_{\Omega_4}$, evaluated at $x = (x_1,x_2)$ with $0<x_2<x_1$. By definition, we have \begin{equation*}
	\begin{split}
	R_{12}\tilde{{\bf 1}}_{\Omega_4}(x) = \frac{1}{\pi} \lim_{R\rightarrow\infty}\left[\int_{\Omega_4^1 \cap[0,R]^2} \frac{(x_1-y_1)(x_2-y_2)}{|x-y|^4} dy - \int_{\Omega_4^5\cap[-R,0]^2} \frac{(x_1-y_1)(x_2-y_2)}{|x-y|^4} dy\right].
	\end{split}
	\end{equation*} We evaluate the first integral as follows: \begin{equation*}
	\begin{split}
	&\int_{\Omega_4^1 \cap[0,R]^2} \frac{(x_1-y_1)(x_2-y_2)}{|x-y|^4} dy  = \frac{1}{2}\int_{ 0\le y_1\le R} \frac{x_1-y_1}{(x_1-y_1)^2+(x_2-y_2)^2} - \frac{x_1-y_1}{(x_1-y_1)^2+x_2^2}  dy_1 \\
	&\quad = \frac{1}{8}\left( 2\arctan(\frac{x_1+x_2}{x_1-x_2}) - 2\arctan(\frac{x_1+x_2-2R}{x_1-x_2})  - \ln(x_1^2+x_2^2)\right. \\
	&\quad\quad\quad\quad  \left.  +2\ln(x_2^2+(x_1-R)^2)-\ln(x_1^2+x_2^2-2x_1R-2x_2R+2R^2) \right). 
	\end{split}
	\end{equation*} Similarly evaluating the second integral and subtracting, we obtain that \begin{equation*}
	\begin{split}
	&R_{12}\tilde{{\bf 1}}_{\Omega_4}(x) = \frac{1}{8\pi} \lim_{R\rightarrow\infty} \left[ - 2\arctan(\frac{x_1+x_2-2R}{x_1-x_2})+ 2\arctan(\frac{x_1+x_2+2R}{x_1-x_2})+2\ln(x_2^2+(x_1-R)^2) \right. \\ 
	&\qquad\left. -2\ln(x_2^2+(x_1+R)^2) -\ln(x_1^2+x_2^2-2x_1R-2x_2R+2R^2) +\ln(x_1^2+x_2^2+2x_1R+2x_2R+2R^2)  \right]. 
	\end{split}
	\end{equation*} Then it is clear that the logarithmic terms cancel each other, and the limit is exactly $\frac{1}{4}$, independent of $x_1,x_2$ as long as $0<x_2<x_1$. 
	
	One may perform similar computations for functions corresponding to any $m\ge3$. We omit the details. 
\end{example}

 \begin{figure}
 	\includegraphics[scale=0.5]{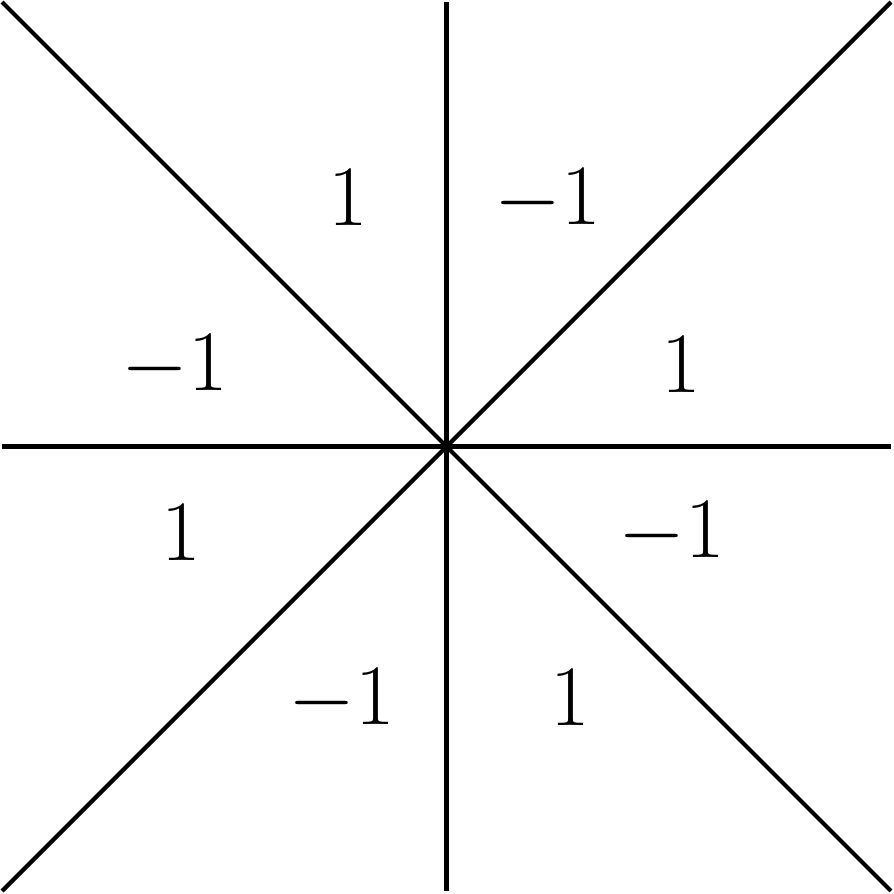}\qquad  \includegraphics[scale=0.5]{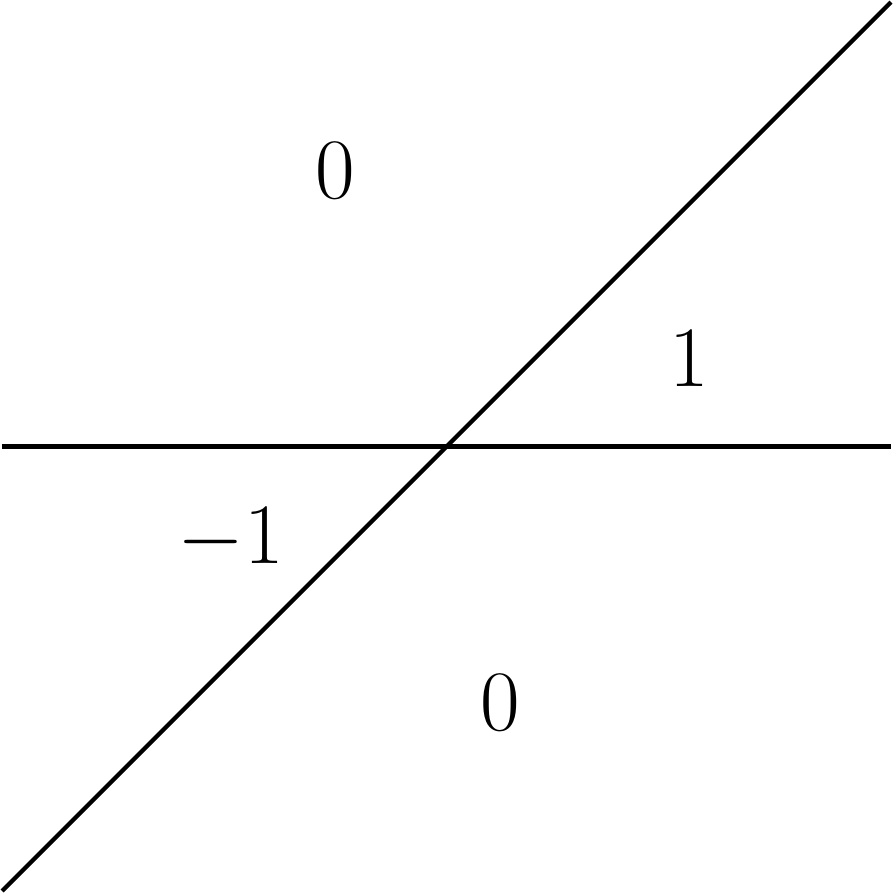} 
 	\centering
 	\caption{The functions $\tilde{{\bf 1}}_{R_4}$ and $\tilde{{\bf 1}}_{\Omega_4}$ (left, right).}
 	\label{fig:functions}
 \end{figure}

\begin{figure}
	\includegraphics[scale=0.5]{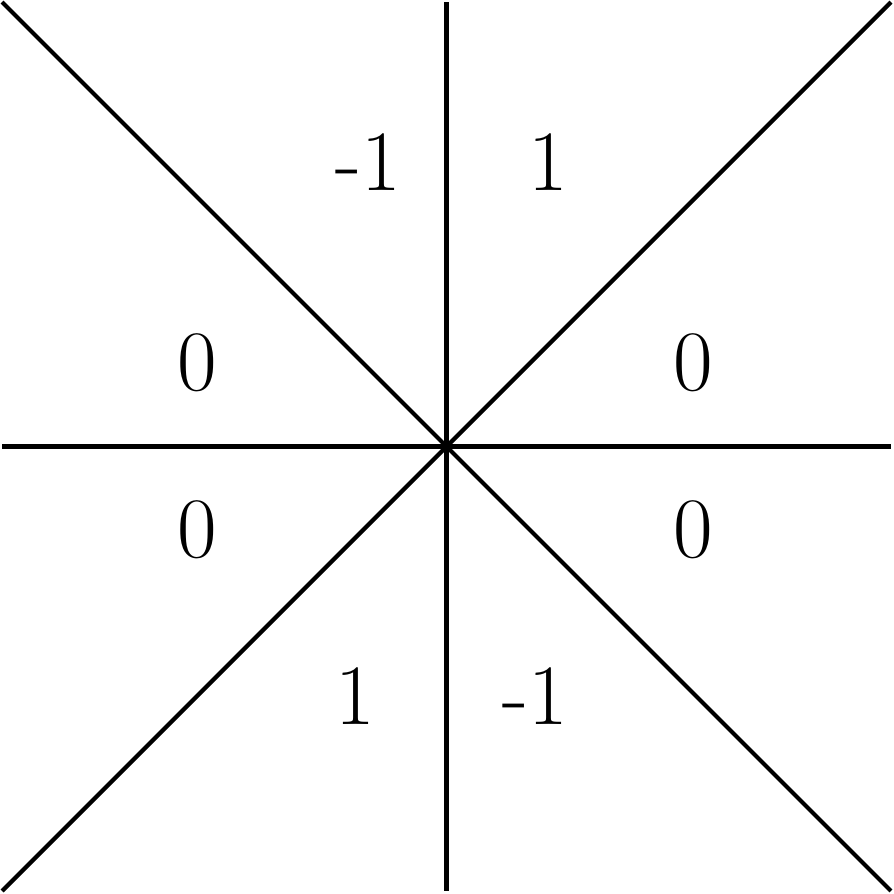}\qquad  \includegraphics[scale=0.5]{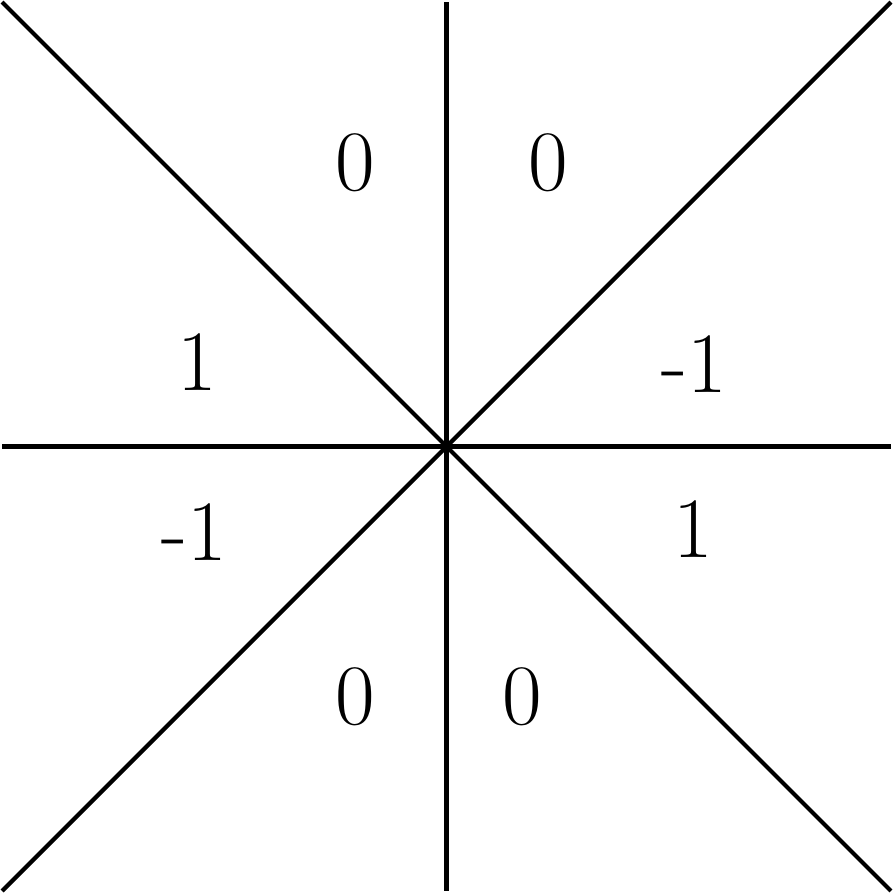}\qquad  \includegraphics[scale=0.5]{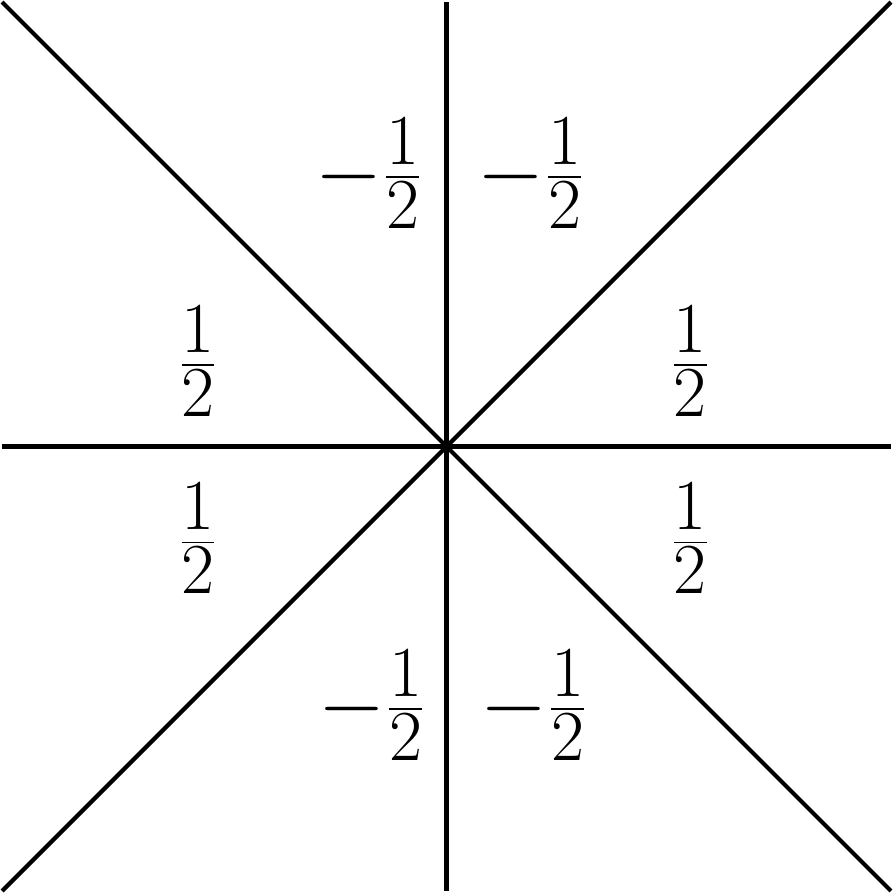} 
	\centering
	\caption{The operators $R_{11}, R_{22}, R_{12}$ acting on $\tilde{{\bf 1}}_{R_4}$ (from left to right).}
		\label{fig:Riesz1}
	\end{figure}

\begin{figure}
	\includegraphics[scale=0.5]{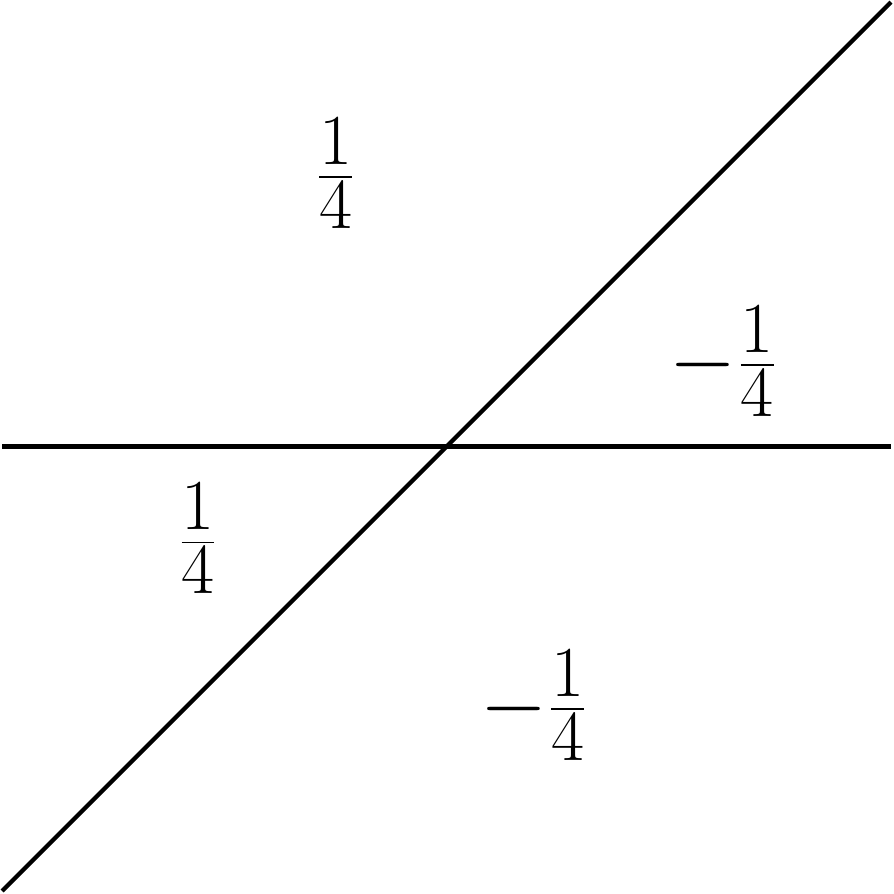}\qquad  \includegraphics[scale=0.5]{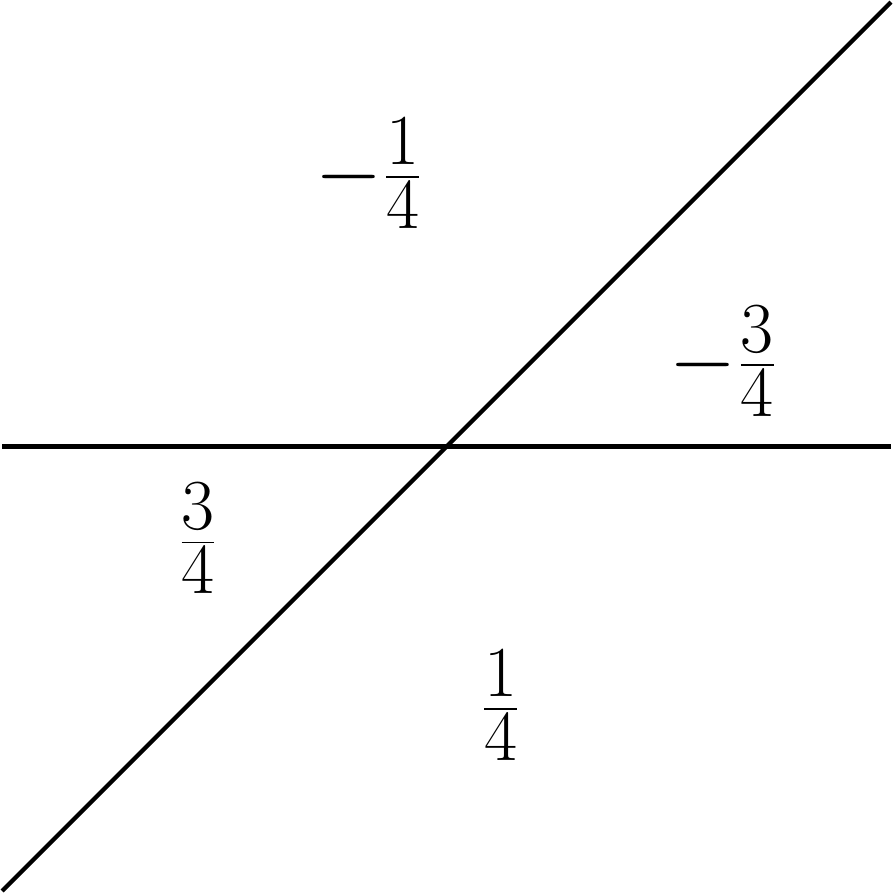}\qquad  \includegraphics[scale=0.5]{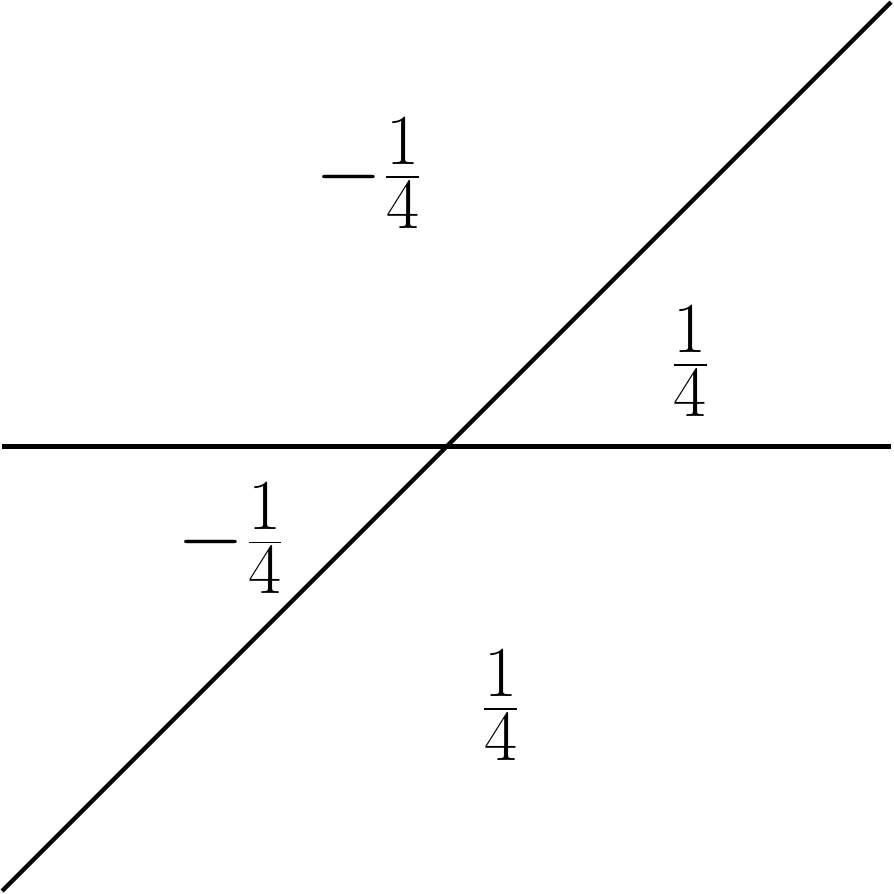} 
	\centering
	\caption{The operators $R_{11}, R_{22}, R_{12}$ acting on $\tilde{{\bf 1}}_{\Omega_4}$ (from left to right).}
	\label{fig:Riesz2}
\end{figure}

\begin{lemma}[$L^\infty$-estimate for $\mathring{C}^\alpha$ functions]\label{lem:C^circlealpha_quad}
	Let $f \in \mathring{C}^\alpha(\Omega_2)$ and assume further that $f$ satisfies the following cancellation properties: \begin{equation}\label{eq:cancellation}
	\begin{split}
	\int_{ \{|y| \ge \epsilon\} \cap \Omega_2} \frac{y_1y_2}{|y|^4} f(y) dy = \int_{ \{|y| \ge \epsilon\} \cap \Omega_2} \frac{y_1^2-y_2^2}{|y|^4} f(y) dy  = 0
	\end{split}
	\end{equation} for all $\epsilon > 0$. Then, we have the logarithmic bound \begin{equation}\label{eq:logbound_quadrant}
	\begin{split}
	\nrm{R_{ij} f}_{L^\infty} \le C \nrm{f}_{L^\infty}\left( 1 + \log\left(1 + \frac{\nrm{f}_{\mathring{C}^\alpha}}{\nrm{f}_{L^\infty}} \right) \right).
	\end{split}
	\end{equation}
\end{lemma}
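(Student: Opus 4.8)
The plan is to reformulate $R_{ij}f$ as a genuine full-plane singular integral and then split that integral into a far field, a near-origin part, an intermediate part, and a truly local part, arranging the estimate so that the logarithm is produced only by the local part.

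First I would pass to $\bbR^2$ by the method of images: since the Dirichlet Green's function of $\Omega_2$ is obtained from that of $\bbR^2$ by reflecting the source across the two coordinate axes, one has $R_{ij}f = \bigl(\rd_{x_i}\rd_{x_j}(-\lap_{\bbR^2})^{-1}\tilde f\bigr)\big|_{\Omega_2}$, where $\tilde f$ is the odd--odd extension of $f$; in particular $\nrm{\tilde f}_{L^\infty(\bbR^2)}=\nrm{f}_{L^\infty(\Omega_2)}$ and $\nrm{\tilde f}_{\mathring C^\alpha(Q)}=\nrm{f}_{\mathring C^\alpha(\Omega_2)}$ on every open quadrant $Q$, and on each circle $\tilde f$ is a superposition of even sine modes. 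Writing $\rd_i\rd_j(-\lap_{\bbR^2})^{-1}=c_{ij}\,\mathrm{Id}+\mathrm{p.v.}\,K_{ij}*$ with $K_{ij}$ homogeneous of degree $-2$, smooth off the origin and of vanishing mean on circles, the term $c_{ij}\tilde f(x)$ is trivially bounded by $\nrm{f}_{L^\infty}$. For the singular integral I fix $x\in\Omega_2$, set $\rho=|x|$, and decompose $\bbR^2=\{|y|<\rho/2\}\cup\{\rho/2\le|y|\le 2\rho\}\cup\{|y|>2\rho\}$.

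In the far field $\{|y|>2\rho\}$ one has $|x-y|\approx|y|$; because $\tilde f$ has only even sine modes on each circle, the cancellation hypothesis \eqref{eq:cancellation} is exactly what forces $\int_{|y|=r}K_{ij}(y)\tilde f(y)\,d\sigma(y)=0$ for every $r$ (for $R_{12}$ this uses the $y_1y_2$-integral in \eqref{eq:cancellation}; for $R_{11},R_{22}$ it is automatic), hence $\int_{|y|>2\rho}K_{ij}(-y)\tilde f(y)\,dy=0$, and replacing $K_{ij}(x-y)$ by $K_{ij}(-y)$ costs $|K_{ij}(x-y)-K_{ij}(-y)|\aleq |x|\,|y|^{-3}$, which integrates to $\aleq\nrm{f}_{L^\infty}$. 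In the near-origin region $\{|y|<\rho/2\}$ one has $|x-y|\approx\rho$; using that $\tilde f$ is odd under $y\mapsto-y$, so $\int_{|y|<\rho/2}\tilde f(y)\,dy=0$, I subtract $K_{ij}(x)$ and use $|K_{ij}(x-y)-K_{ij}(x)|\aleq|y|\rho^{-3}$, again getting $\aleq\nrm{f}_{L^\infty}$. The part of the intermediate annulus with $|y-x|>\rho/10$ is harmless since there $|K_{ij}|\aleq\rho^{-2}$ over area $\aleq\rho^{2}$. The crux is the local part $\int_{|y-x|<\rho/10}K_{ij}(x-y)\tilde f(y)\,dy$. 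When $\mathrm{dist}(x,\rd\Omega_2)\gtrsim\rho$ the ball lies in one quadrant, so, using the vanishing mean of $K_{ij}$ on circles, I replace $\tilde f(y)$ by $\tilde f(y)-\tilde f(x)$ and invoke the two-sided bound $|\tilde f(y)-\tilde f(x)|\aleq\min\bigl(\nrm{f}_{L^\infty},\,\rho^{-\alpha}\nrm{f}_{\mathring C^\alpha}|y-x|^\alpha\bigr)$; splitting the radial integral at the crossover scale $s_0=\rho\bigl(\nrm{f}_{L^\infty}/\nrm{f}_{\mathring C^\alpha}\bigr)^{1/\alpha}$ gives $\aleq\nrm{f}_{L^\infty}$ from $\{|y-x|<s_0\}$ and $\aleq\nrm{f}_{L^\infty}\log(\rho/s_0)\aleq_\alpha\nrm{f}_{L^\infty}\bigl(1+\log(1+\nrm{f}_{\mathring C^\alpha}/\nrm{f}_{L^\infty})\bigr)$ from $\{s_0<|y-x|<\rho/10\}$ — this is precisely the logarithm in \eqref{eq:logbound_quadrant}. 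When $x$ is within distance $\ll\rho$ of a coordinate axis, subtracting $\tilde f(x)$ fails, since $\tilde f$ jumps by $O(\nrm{f}_{L^\infty})$ arbitrarily near $x$; there I instead exploit that locally $\tilde f$ is the \emph{odd} reflection of $f$ across that axis, and peel off the model $f(x^\flat)\,\mathrm{sgn}(\text{perpendicular coordinate})$ ($x^\flat$ = foot of $x$ on the axis), whose double Riesz transform is bounded by an explicit computation of the same kind performed above for $\mathrm{sgn}(x_1x_2)$; the remainder vanishes on the axis, so its odd reflection is genuinely $C^\alpha$ across it and the same $\min$-splitting applies, reproducing the same logarithm. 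Since every estimate scales with $\rho=|x|$, the case where $x$ is near both an axis and the origin needs no separate treatment, and summing the four contributions yields \eqref{eq:logbound_quadrant}.

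I expect the main obstacle to be exactly this last point: the local estimate for $x$ approaching an edge of $\Omega_2$, where the odd extension is discontinuous at scales comparable to $\mathrm{dist}(x,\rd\Omega_2)$, so one must first split off the correct bounded ``constant $\times$ $\mathrm{sgn}$'' profile before any Hölder estimate becomes available. This is exactly the place where allowing $f$ not to vanish on $\rd\Omega_2$ is felt, and where the explicit Bahouri--Chemin-type computations preceding the lemma are put to use; everything else reduces to the standard interplay between the vanishing mean of the kernel, the decay furnished by \eqref{eq:cancellation}, and the modulus of continuity encoded in the $\mathring C^\alpha$ norm.
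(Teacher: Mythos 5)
Your proposal is correct and follows essentially the same route as the paper: pass to the odd--odd extension, use the cancellation hypothesis (automatic for $R_{11},R_{22}$, the $y_1y_2$ part of \eqref{eq:cancellation} for $R_{12}$) to kill the far field, bound the intermediate region crudely, and extract the logarithm from the local region by interpolating between $\nrm{f}_{L^\infty}$ and $\rho^{-\alpha}\nrm{f}_{\mathring C^\alpha}|y-x|^\alpha$; your radial split at the crossover scale $s_0$ is the same computation as the paper's optimization over the cutoff $l$. One caveat on your near-boundary local step: after peeling off $f(x^\flat)\,\mathrm{sgn}(y_2)$ the remainder is \emph{not} genuinely $C^\alpha$ across the axis --- it still jumps by $2|f(y_1,0)-f(x^\flat)| = O(|y_1-x_1|^\alpha)$ at $(y_1,0)$ --- and the resulting bound $|y-x^\flat|^\alpha\le(|y-x|+x_2)^\alpha$ carries an extra $x_2^\alpha$ that must be absorbed by also treating the sub-scale $|y-x|<x_2$ separately. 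The paper sidesteps this case distinction entirely by comparing $\tilde f(y)$ with $f(x)\tilde{\mathbf 1}(y)$ for the odd--odd extension $\tilde{\mathbf 1}$ of the indicator, which gives the clean bound $|\tilde f(y)-f(x)\tilde{\mathbf 1}(y)|\le C\nrm{f}_{\mathring C^\alpha}|x|^{-\alpha}|y-x|^\alpha$ uniformly up to the boundary (since the reflected point satisfies $|\hat y-x|\le|y-x|$), at the price of the single ``half-moon'' computation for $\int K(x-y)\tilde{\mathbf 1}(y)\,dy$; your argument is repairable but this device is worth adopting.
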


\begin{remark}
	The cancellation property in \eqref{eq:cancellation} is essential for the $L^\infty$-bound, as one can see from the explicit computation for the Bahouri-Chemin function. (This is not necessary for the $\mathring{C}^\alpha_*$-estimate.) On the other hand, it is not necessary for the function to have compact support. 
\end{remark}

\begin{proof}
	We need to consider the integrals (defined by the principal value) \begin{equation*}
	\begin{split}
	\int_{\mathbb{R}^2} \frac{(x_1-y_1)(x_2-y_2)}{|x-y|^4} \tilde{f}(y)dy,\qquad \int_{\mathbb{R}^2} \frac{(x_1-y_1)^2-(x_2-y_2)^2}{|x-y|^4} \tilde{f}(y)dy,
	\end{split}
	\end{equation*} where $x \in \Omega_2$. Let us treat the first integral. Consider regions (i) $\{ |x-y| \le l|x| \}$, (ii) $\{ l|x|< |x-y|\} \cap \{ 2|x| <|y| \} $, and (iii) $\{ 2|x| < |y| \}$. Here $l \le 1/2$ is a number to be specified below. We first treat the regions (ii) and (iii): regarding (ii), we note that the domain is contained in the set $\{l|x|< |x-y| \le 3|x|\}$, so that  \begin{equation*}
	\begin{split}
	&\left| \int_{} \frac{(x_1-y_1)(x_2-y_2)}{|x-y|^4} \tilde{f}(y)dy \right| \le	  \int_{l|x|< |x-y| \le 3|x|}	\left| \frac{(x_1-y_1)(x_2-y_2)}{|x-y|^4} \tilde{f}(y) \right|dy \\
	&\qquad \le C\nrm{f}_{L^\infty} \int_{l|x|< |x-y| \le 3|x|} \frac{1}{|x-y|^2} dy \le C\nrm{f}_{L^\infty}\ln \frac{3}{l}.
	\end{split}
	\end{equation*} Then, for (iii), we may re-write the integral as \begin{equation*}
	\begin{split}
	\int_{|y| > 2|x| } \left[  \frac{(x_1-y_1)(x_2-y_2)}{|x-y|^4}  - \frac{y_1y_2}{|y|^4}   \right] \tilde{f}(y) dy,
	\end{split}
	\end{equation*} and then the kernel in large brackets decays as $|y|^{-3}$ as $|y| \rightarrow +\infty$. Hence, we may bound the above by $C\nrm{f}_{L^\infty}$.
	Lastly, in region (i), we rewrite the integral as \begin{equation*}
	\begin{split}
	\int_{|x-y| \le  l|x|} \frac{(x_1-y_1)(x_2-y_2)}{|x-y|^4} (f(y)-f(x)\tilde{\mathbf{1}}(y))\sim dy + f(x) \int_{|x-y| \le  l|x|} \frac{(x_1-y_1)(x_2-y_2)}{|x-y|^4} \tilde{\mathbf{1}}(y) dy.
	\end{split}
	\end{equation*} From \begin{equation*}
	\begin{split}
	| (f(y)-f(x)\tilde{\mathbf{1}}(y))| \le \frac{C|x-y|^\alpha}{|x|^\alpha} \nrm{f}_{\mathring{C}^\alpha},
	\end{split}
	\end{equation*} we bound \begin{equation*}
	\begin{split}
	\left| \int_{|x-y| \le  l|x|} \frac{(x_1-y_1)(x_2-y_2)}{|x-y|^4} (f(y)-f(x)\tilde{\mathbf{1}}(y)) dy  \right| \le C\nrm{f}_{\mathring{C}^\alpha}\frac{1}{|x|^\alpha} \int_{|x-y| \le  l|x|} |x-y|^{\alpha-2} dy \le  Cl^\alpha \nrm{f}_{\mathring{C}^\alpha}.
	\end{split}
	\end{equation*} On the other hand, explicit computations show that the integral $$\int_{|x-y| \le  l|x|} \frac{(x_1-y_1)(x_2-y_2)}{|x-y|^4} \tilde{\mathbf{1}}(y) dy$$ is uniformly bounded in $x$. To see this, after a rescaling of variables $y = |x|z$, the integral equals \begin{equation*}
	\begin{split}
	\int_{|v-z| \le l} \frac{(v_1-z_1)(v_2-z_2)}{|v-z|^4} \tilde{\mathbf{1}}(z) dz,
	\end{split}
	\end{equation*} where $v :=x/|x|$. The integral vanishes when $v_1 > l$, since then $\tilde{\mathbf{1}} \equiv\mathbf{1}$ in the domain of integration. Assuming that $v_1 < l$, using polar coordinates centered at $v$, we further rewrite it as \begin{equation*}
	\begin{split}
	&\int_{v_1}^{l} \int_{-\pi}^{\pi} \frac{\sin(2\theta)}{r} \left( \mathbf{1}_{[-(\pi-\theta_r),\pi-\theta_r]} - \mathbf{1}_{[-\pi,-(\pi-\theta_r)]} - \mathbf{1}_{[\pi-\theta_r,\pi]}  \right)  d\theta dr
	\end{split}
	\end{equation*} where $\cos(\theta_r) = v_1/r$. This integral actually vanishes. At this point, we also note that \begin{equation*}
	\begin{split}
	&\int_{|x-y| \le  l|x|} \frac{(x_1-y_1)^2-(x_2-y_2)^2}{|x-y|^4} \tilde{\mathbf{1}}(y) dy \\
	&\qquad = \int_{v_1}^{l} \int_{-\pi}^{\pi} \frac{\cos(2\theta)}{r} \left( \mathbf{1}_{[-(\pi-\theta_r),\pi-\theta_r]} - \mathbf{1}_{[-\pi,-(\pi-\theta_r)]} - \mathbf{1}_{[\pi-\theta_r,\pi]}  \right)  d\theta dr \\
	&\qquad = c \int_{v_1}^{l} \frac{1}{r} \sin(\theta_r)\cos(\theta_r) dr,
	\end{split}
	\end{equation*} and since $\cos(\theta_r) = v_1/r$, the last expression is bounded by \begin{equation*}
	\begin{split}
	\le C\int_{v_1}^l \frac{v_1}{r^2}dr \le C v_1 \left( \frac{1}{v_1} - \frac{1}{l} \right) \le C. 
	\end{split}
	\end{equation*} This simple estimate will be referred to as a ``half-moon'' computation in what follows. (This is just a simple case of the so-called ``Geometric Lemma'' in \cite{BeCo}.) Collecting the bounds, we have \begin{equation*}
	\begin{split}
	\left|\int_{\mathbb{R}^2} \frac{(x_1-y_1)(x_2-y_2)}{|x-y|^4} \tilde{f}(y)dy\right| \le Cl^\alpha \nrm{f}_{\mathring{C}^\alpha} + C\nrm{f}_{L^\infty}\ln\frac{3}{l}. 
	\end{split}
	\end{equation*} Optimizing in $l$ gives the desired logarithmic bound \eqref{eq:logbound_quadrant}. The same arguments carry over to deal with the other expression \begin{equation*}
	\begin{split}
	\int_{ \mathbb{R}^2} \frac{(x_1-y_1)^2-(x_2-y_2)^2}{|x-y|^4}\tilde{f}(y)dy.
	\end{split}
	\end{equation*} The proof is complete.
\end{proof}

\subsubsection{Proof of the lemmas}\label{subsubsec:proofs}

In this section, we complete the proof of the lemmas stated in \ref{subsubsec:lemmas}. We omit the proof of Lemma \ref{lem:2D-1}, which is covered in \cite{Grisvard1985,EJB}. Alternatively, one can follow the arguments given for the proof of Lemma \ref{lem:2D-3} below, which is somewhat more involved. 

%\begin{lemma}[$C^\alpha$-estimate for functions vanishing at the origin]
%	Let $f \in C^\alpha_c(\Omega_2)$ with $f(0) = 0$. Then, we have the following $C^\alpha$-estimate:\begin{equation}\label{eq:L^infty_C^alpha_vanishing}
%	\begin{split}
%	\Vert \nabla^2(-\Delta)^{-1}f\Vert_{L^\infty(\Omega_2)} \le  C R^\alpha \nrm{f}_{C_*^\alpha(\Omega_2)} + C \nrm{f}_{L^\infty(\Omega_2)}
%	\end{split}
%	\end{equation}\begin{equation}\label{eq:C^alpha_C^alpha_vanishing}
%	\begin{split}
%	\Vert \nabla^2(-\Delta)^{-1}f\Vert_{C_*^\alpha(\Omega_2)} \le C  \nrm{f}_{C_*^\alpha(\Omega_2)}  
%	\end{split}
%	\end{equation} with $C = C(\alpha) > 0$. 
%\end{lemma}
\begin{proof}[Proof of Lemma \ref{lem:2D-2}]
	We consider the kernel \begin{equation*}
	\begin{split}
	K(x,y) = \frac{(x_1-y_1)(x_2-y_2)}{|x-y|^4}
	\end{split}
	\end{equation*} for $R_{12}$ as well as its symmetrized version \begin{equation*}
	\begin{split}
	\tilde{K}(x,y) := K(x,y) - K(x,\hat{y}) +K(x,-y) - K(x,-\hat{y}),
	\end{split}
	\end{equation*} where $\hat{y} := (y_1,-y_2)$. (The proof for $R_{11}$ and $R_{22}$ will be completely analogous.) Then, we first need to estimate \begin{equation*}
	\begin{split}
	A(x):=\int_{\Omega_2} \tilde{K}(x,y)f(y)dy
	\end{split}
	\end{equation*} in $L^\infty$. By rescaling, we may assume without loss of generality that $R = 1$; i.e. $f(y)$ vanishes for $|y| \ge 1$. Moreover, in the above, we may insert a smooth cut-off $\chi(y)\ge0$ which satisfy $\chi(y)=1$ for $|y|=1$. Then, we write \begin{equation*}
	\begin{split}
	A(x) = \int_{\{|y| \le 1\} \cap \Omega_2} \tilde{K}(x,y)(f(y)-f(x)) \chi(y) dy + f(x) \int_{\{|y| \le 1\} \cap \Omega_2} \tilde{K}(x,y)\chi(y)dy =: I(x) + J(x) , 
	\end{split}
	\end{equation*}  and we bound \begin{equation*}
	\begin{split}
	|I(x)| = \left| \int_{\{|y| \le 1\} \cap \Omega_2} \tilde{K}(x,y)(f(y)-f(x))\chi(y) dy  \right| &\le \nrm{f}_{C_*^\alpha} \int_{\{|y| \le 1\} \cap \Omega_2} |\tilde{K}(x,y)| |x-y|^\alpha dy \\
	&\le C \nrm{f}_{C_*^\alpha} \int_{\{|y| \le 1\} \cap \Omega_2} \frac{dy}{|x-y|^{2-\alpha}} \\
	&\le C\nrm{f}_{C_*^\alpha} \int_{\{|y| \le 1\} \cap \Omega_2} \frac{dy}{|y|^{2-\alpha}} \le C \nrm{f}_{C_*^\alpha}
	\end{split}
	\end{equation*} as well as \begin{equation*}
	\begin{split}
	|J(x)| = \left| f(x)  \int_{\{|y| \le 1\} \cap \Omega_2} \tilde{K}(x,y) \chi(y)dy \right| \le C|f(x)| (1 + | \ln|x||) \le C(\nrm{f}_{C_*^\alpha} + \nrm{f}_{L^\infty}),
	\end{split}
	\end{equation*} where we have used that $|f(x)| \le \min\{ \nrm{f}_{C_*^\alpha} |x|^\alpha \mathbf{1}_{|x| \le 1}, \nrm{f}_{L^\infty} \}$.
	
	We now consider the difference $A(x) - A(x')$, for some $x \ne x' \in \Omega_2$. We may assume that $|x| \le |x'|$. We use the fact that (recall Corollary \ref{cor:BC}) \begin{equation}\label{eq:corBC}
	\begin{split}
	\int_{\{|y| \le 1\} \cap \Omega_2} \tilde{K}(x,y) \chi(y)dy = c \ln|x| + F(x)
	\end{split}
	\end{equation} where $F$ is bounded and belongs to $\mathring{C}^\alpha(\Omega_2)$. Take the region \begin{equation*}
	\begin{split}
	D := \{ |y| \le 1 : |y-x'| \le 2|x-x'| \}.
	\end{split}
	\end{equation*} Then, write \begin{equation*}
	\begin{split}
	I(x) - I(x') &= \left[ \int_{D\cap \Omega_2} + \int_{D^c \cap \Omega_2} \right] \tilde{K}(x,y)(f(y) - f(x)) dy - \left[ \int_{D\cap \Omega_2} + \int_{D^c \cap \Omega_2} \right] \tilde{K}(x',y)(f(y)-f(x'))dy \\
	&= \left[ \int_{D\cap \Omega_2}  \tilde{K}(x,y)(f(y) - f(x)) dy  \right] - \left[ \int_{D\cap \Omega_2}  \tilde{K}(x',y)(f(y) - f(x')) dy  \right] \\
	&\qquad +\left[ \int_{D^c \cap \Omega_2} \tilde{K}(x',y)(f(x')-f(x)) dy \right] +\left[ \int_{D^c \cap \Omega_2} (\tilde{K}(x,y)-\tilde{K}(x',y))(f(y)-f(x))dy \right] \\
	& =: (1) + (2) + (3) + (4).
	\end{split}
	\end{equation*} We first bound the terms $(1), (2)$, and $(4)$. Note that \begin{equation*}
	\begin{split}
	|(2)| \le C \nrm{f}_{C^\alpha_*} \int_{ |y-x'| \le 2|x-x'| } |y-x'|^{\alpha -2} dy \le C\nrm{f}_{C^\alpha_*}|x-x'|^\alpha 
	\end{split}
	\end{equation*} and the term $(1)$ can be treated in a parallel manner. Then, \begin{equation*}
	\begin{split}
	|(4)| \le C\nrm{f}_{C^\alpha_*}  \int_{D^c \cap \Omega_2} |\nabla\tilde{K}(x^*,y)||x-x'||y-x|^\alpha dy  
	\end{split}
	\end{equation*} where $x^*$ is some point lying on the line segment connecting $x$ and $x'$. Note that we have \begin{equation*}
	\begin{split}
	|\nabla\tilde{K}(x^*,y)| \le \frac{C}{|x-y|^3},\qquad y \in D^c. 
	\end{split}
	\end{equation*} Then we conclude that \begin{equation*}
	\begin{split}
	|(4)| \le  C\nrm{f}_{C^\alpha_*} |x-x'| \int_{|x-y| \ge |x-x'|} |y-x|^{\alpha-3} dy \le  C\nrm{f}_{C^\alpha_*} |x-x'|^\alpha.
	\end{split}
	\end{equation*} 
	Finally, we combine the remaining terms as follows: \begin{equation*}
	\begin{split}
	(3) + J(x) - J(x') &= - \left[ f(x) \int_{\Omega_2} (\tilde{K}(x',y) - \tilde{K}(x,y))\chi(y)dy \right] + (f(x) - f(x'))\int_{D \cap \Omega_2} \tilde{K}(x',y)\chi(y)dy \\
	&= -f(x)(F(x') - F(x)) - cf(x)\ln\frac{|x'|}{|x|} + (f(x) - f(x'))\int_{D \cap \Omega_2} \tilde{K}(x',y)\chi(y)dy 
	\end{split}
	\end{equation*} where $F$ is as in \eqref{eq:corBC}. Note that \begin{equation*}
	\begin{split}
	\left| \frac{-f(x)(F(x') - F(x))  }{|x-x'|^\alpha} \right| \le C\nrm{f}_{C^\alpha}
	\end{split}
	\end{equation*} since $F $ belongs to $\mathring{C}^\alpha$ and $|f(x)| \le \nrm{f}_{C^\alpha}|x|^\alpha \le \nrm{f}_{C^\alpha} |x'|^\alpha$. Next, \begin{equation*}
	\begin{split}
	\frac{1}{|x-x'|^\alpha}\left| f(x) \ln \frac{|x'|}{|x|} \right| \le \frac{|f(x)|}{|x-x'|^\alpha}  \ln\left(1 + \frac{|x'|-|x|}{|x|}\right) \le C\frac{|f(x)|}{|x-x'|^\alpha} \frac{|x'-x|^\alpha}{|x|^\alpha}  \le C \nrm{f}_{C^\alpha}.
	\end{split}
	\end{equation*} It only remains to obtain a uniform bound for the following integral:  \begin{equation*}
	\begin{split}
	\frac{|f(x)-f(x')|}{|x-x'|^\alpha} \left| \int_{D \cap \Omega_2} \tilde{K}(x',y)\chi(y) dy \right| \le \nrm{f}_{C^\alpha} \left| \int_{D \cap \Omega_2} \tilde{K}(x',y)\chi(y) dy \right|.
	\end{split}
	\end{equation*} We expand $\tilde{K}(x',y) = K(x',y) - K(x',\hat{y}) + K(x',-y) - K(x',-\hat{y})$ and bound each term separately. The main contribution comes from $K(x',y)$: \begin{equation*}
	\begin{split}
	\left|\int_{D \cap \Omega_2}  {K}(x',y) \chi(y) dy\right| \le \int_{ |x' - y| \le l|x'| } |K(x',y)| dy 
	\end{split}
	\end{equation*} where $l = 2|x-x'|/|x'|$. We could have assumed that $|x-x'| \le |x'|/2$ so that $l \le 1$. The fact that this expression is uniformly bounded in $x'$ and $0 < l \le 1$ follows from ``half-moon'' computations contained in the proof of Lemma \ref{lem:C^circlealpha_quad}. The other terms in $\tilde{K}$ can be treated similarly. 
\end{proof}

\begin{proof}[Proof of Lemma \ref{lem:2D-3}]
	We only consider the case $m = 4$, argument for the case $m=3$ being completely parallel. Without loss of generality, we can assume that the function $f$ is supported only on $\Omega_4^1 \cup \Omega_4^5$. We define $\tilde{\textbf{1}}_{\Omega_4}:=\textbf{1}_{\Omega_4^1} - \textbf{1}_{\Omega_4^5}$. We consider only the case of $R_{12}$ with kernel $K(x,y)$. 
	
	\medskip
	
	\noindent (i) $L^\infty$ bound
	
	\medskip
	
	\noindent We fix some $\textbf{0} \ne x \in \Omega_4^1$ and consider \begin{equation*}
	\begin{split}
	\int_{\bbR^2} K(x-y) f(y)dy = \left[ \int_{\{|x-y|<\ell|x|\}  }+ \int_{B}  + \int_{\{|y|>10|x|\}}  \right] K(x-y)f(y) dy = I+II+III,
	\end{split}
	\end{equation*} where $0<\ell\le\frac{1}{2}$ will be optimized later and $B = \mathbb{R}^2 \backslash ( \{ |x-y|<\ell|x|\}   \cup \{ |y|>10|x| \} ) $. First, we write \begin{equation*}
	\begin{split}
	I = \int_{\{|x-y|<\ell|x| \} \cap \Omega_4^1 } K(x-y)(f(y)-f(x)) dy \, + f(x) \int_{\{|x-y|< \ell|x| \} \cap \Omega_4^1 } K(x-y) dy. 
	\end{split}
	\end{equation*} Then, we observe that in the domain of integration, $|f(y)-f(x)| \le \nrm{f}_{\mathring{C}^\alpha(\Omega_4^1)}|x|^{-\alpha}|x-y|^\alpha$. The fact that the second term is uniformly bounded follows from ``half-moon'' computation again. Hence,  \begin{equation*}
	\begin{split}
	|I| \le \nrm{f}_{\mathring{C}^\alpha(\Omega_4^1)}|x|^{-\alpha}\int_{\{ |x-y|< \ell|x| \}} |x-y|^{\alpha-2} dy + C\nrm{f}_{L^\infty} \le C\left( \nrm{f}_{L^\infty} + \ell^\alpha \nrm{f}_{\mathring{C}^\alpha(\Omega_4^1)}  \right).
	\end{split}
	\end{equation*} Next, it is clear that \begin{equation*}
	\begin{split}
	|II| \le C\nrm{f}_{L^\infty} \ln \frac{20}{\ell}. 
	\end{split}
	\end{equation*} Lastly, we note that from the odd symmetry of $f$, \begin{equation*}
	\begin{split}
	\int_{ \{ |y|>10|x| \} } \frac{y_1y_2}{|y|^4} f(y)dy = 0
	\end{split}
	\end{equation*} in the sense of principal value integration. Therefore we estimate \begin{equation*}
	\begin{split}
	|III| \le C\left| \int_{ \{ |y|>10|x| \} }  \left[\frac{(x_1-y_1)(x_2-y_2)}{|x-y|^4} -\frac{y_1y_2}{|y|^4}\right] f(y)dy \right| \le C\nrm{f}_{L^\infty} |x|\int_{ \{ |y|>10|x| \} }|y|^{-3}dy \le C\nrm{f}_{L^\infty}. 
	\end{split}
	\end{equation*} Optimizing in $\ell$ finishes the proof. 
	
	\medskip
	
	\noindent (ii) $C^\alpha_*$ bound 
	
	\medskip 
	
	\noindent We now take $x \ne  x' \in \Omega_4^1$, write $\tilde{\textbf{1}}(y):=\tilde{{\bf 1}}_{\Omega_4}(y)$ for simplicity, and estimate the difference \begin{equation*}
	\begin{split}
	R_{12}f(x)-R_{12}f(x')=\int_{\Omega_4^1\cup\Omega_4^5 } K(x-y)f(y)dy - \int_{\Omega_4^1\cup\Omega_4^5} K(x'-y)f(y) dy. 
	\end{split}
	\end{equation*} We shall write $\int = \int_{\Omega_4^1\cup\Omega_4^5}$ from now on and rewrite the above as \begin{equation*}
	\begin{split}
	&\int K(x-y)(f(y)-f(x)\tilde{\textbf{1}}(y)) dy - \int K(x'-y)(f(y)-f(x')\tilde{\textbf{1}}(y))  dy \\
	&\qquad  +  f(x) \int K(x-y)\tilde{\textbf{1}}(y) dy- f(x') \int K(x'-y)\tilde{\textbf{1}}(y) dy. 
	\end{split}
	\end{equation*} Recall from the explicit computations in \ref{subsubsec:explicit} that \begin{equation*}
	\begin{split}
	\int K(x-y)\tilde{\textbf{1}}(y) dy = \int K(x'-y)\tilde{\textbf{1}}(y) dy = \frac{1}{4}
	\end{split}
	\end{equation*} and hence \begin{equation*}
	\begin{split}
	\left|  f(x) \int K(x-y)\tilde{\textbf{1}}(y) dy- f(x') \int K(x'-y)\tilde{\textbf{1}}(y) dy. 
\right| \le C|f(x)-f(x')| \le C|x-x'|^\alpha\nrm{f}_{C^\alpha_*(\Omega_4)}. 
	\end{split}
	\end{equation*} We now turn to the first two terms and split the integral into $\{ |x-y| > 10|x-x'| \}$ and $\{ |x-y|\le 10|x-x'| \}$ for \textit{both} integrals. In the latter regions, we simply estimate \begin{equation*}
	\begin{split}
	&\left|\int_{\{ |x-y|\le 10|x-x'| \}} K(x-y)(f(y)-f(x)\tilde{\textbf{1}}(y)) dy \right| + \left|\int_{\{ |x-y|\le 10|x-x'| \}} K(x'-y)(f(y)-f(x')\tilde{\textbf{1}}(y)) dy \right| \\
	&\qquad \le C \nrm{f}_{C^\alpha_*(\Omega_4)}\int_{\{ |x-y|\le 10|x-x'| \}} |x-y|^{\alpha-2} dy \le C \nrm{f}_{C^\alpha_*(\Omega_4)}|x-x'|^\alpha.  
	\end{split}
	\end{equation*} Here, we have used the pointwise bound \begin{equation*}
	\begin{split}
	\left|f(y)-f(x)\tilde{\textbf{1}}(y)\right|\le |x-y|^\alpha \nrm{f}_{C^\alpha_*(\Omega_4)}
	\end{split}
	\end{equation*} (and with $x$ replaced by $x'$) which follows from the definition of $\tilde{{\bf 1}}$ and the odd symmetry of $f$. Then, we just combine the remaining terms to obtain \begin{equation*}
	\begin{split}
	\int_{\{ |x-y| > 10|x-x'| \}} (K(x-y)-K(x'-y))(f(y)-f(x') \tilde{\textbf{1}}(y) )dy - \int_{\{ |x-y| > 10|x-x'| \}} K(x-y) (f(x)-f(x'))\tilde{\textbf{1}}(y) dy. 
	\end{split}
	\end{equation*} Estimating the second term is straightforward, and for the first term, we use the mean value theorem as well as the decay of $\nabla K$ to obtain \begin{equation*}
	\begin{split}
	|K(x-y)-K(x'-y)| \le |\nabla K(x^*-y)||x-x'| \le C \frac{|x-x'|}{|x-y|^3}
	\end{split}
	\end{equation*} (where $x^*$ is some point lying on the line segment defined by $x,x'$) and then \begin{equation*}
	\begin{split}
	&\left|\int_{\{ |x-y| > 10|x-x'| \}} (K(x-y)-K(x'-y))(f(y)-f(x') \tilde{\textbf{1}}(y) )dy \right| \\
	&\qquad \le C\nrm{f}_{C^\alpha_*(\Omega_4)}|x-x'|\int_{\{ |x-y| > 10|x-x'| \}}|x-y|^{\alpha-3}dy \le C\nrm{f}_{C^\alpha_*(\Omega_4)}|x-x'|^\alpha. 
	\end{split}
	\end{equation*} Collecting the bounds, we obtain that \begin{equation*}
	\begin{split}
	\frac{|R_{12}f(x)-R_{12}f(x')|}{|x-x'|^\alpha} \le C \nrm{f}_{C^\alpha_*(\Omega_4)}. 
	\end{split}
	\end{equation*} The same proof carries over to the case when $x \ne x' \in \Omega_4^i$ for any $i>1$. 
	
	We omit the proof of the $\mathring{C}^\alpha_*$ bound, which is a straightforward adaptation of the $C^\alpha_*$ bound. 
\end{proof}

\subsection{Three-dimensional case}\label{subsec:estimate-3D}

Equipped with the two-dimensional H\"older estimates for the double Riesz transforms, we now move on to the corresponding 3D estimates which are directly responsible for the local regularity result \ref{thm:lwp-corner}. The goal of this section is to establish the following \begin{proposition}\label{prop:3D-estimate}
	Let $f = (f^1,f^2,f^3) \in (C^\alpha\cap\mathring{C}^\alpha(\tilde{U}))^3$ satisfy $f(\mathbf{0}) = 0$ with $f^1+f^2$ vanishing on $\{x_3=0,x_1=x_2\}\cap\tilde{U}$. Then, we have \begin{equation*}
	\begin{split}
	\nrm{(R_{ij}f)^k}_{C^\alpha\cap\mathring{C}^\alpha(\tilde{U})} \le C \left( \sum_{\ell=1}^3\nrm{f^\ell}_{C^\alpha\cap\mathring{C}^\alpha(\tilde{U})}  \right)
	\end{split}
	\end{equation*} for any $1 \le i,j,k\le3$. 
\end{proposition}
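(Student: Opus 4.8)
The plan is to carry out the reduction scheme described in the introduction. To begin with, $R_{ij}$ is invariant under the scaling $f\mapsto f(\lmb\,\cdot)$ and preserves $\nrm{\cdot}_{C^\alpha\cap\mathring{C}^\alpha}$; moreover, using that $\tilde f$ is $\calO$-symmetric together with the expansion of Section \ref{sec:expansion} (differentiated once more), the contribution to the principal-value integral defining $R_{ij}f$ coming from $\{|y|\ge 2|x|\}$ is bounded in $C^\alpha\cap\mathring{C}^\alpha$ by $C\nrm{f}_{\mathring{C}^\alpha}$, so I may assume $\supp f$ lies in a fixed ball. Next, fix a finite partition of unity $1=\sum_\ell\chi_\ell$ on $\bbS^2\cap\overline{\tilde U}$ and extend each $\chi_\ell$ to $\bbR^3$ as a $0$-homogeneous function. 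Since $f(\mathbf 0)=0$, the product rule $\nrm{\chi_\ell f}_{C^\alpha}\le C\nrm{\chi_\ell}_{\mathring{C}^\alpha}\nrm{f}_{C^\alpha}$ (and its $\mathring{C}^\alpha$ analogue) shows that each piece $\chi_\ell f$ is controlled by $\nrm{f}_{C^\alpha\cap\mathring{C}^\alpha(\tilde U)}$ and again meets the hypotheses, so it suffices to estimate $R_{ij}(\chi_\ell f)$ in the three cases where $\supp\chi_\ell$ meets $\bbS^2\cap\overline{\tilde U}$ (i) in the interior, (ii) near the relative interior of one of the three flat faces of $\partial\tilde U$, or (iii) near one of the three edges $\vec\frka_2,\vec\frka_3,\vec\frka_4$. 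In case (i) the extension $\widetilde{\chi_\ell f}$ coincides with $\chi_\ell f$ near its support and $R_{ij}(\chi_\ell f)$ is the full-space double Riesz transform; in case (ii) it is, near its support, $\chi_\ell f$ together with its reflection across a single plane, so $R_{ij}(\chi_\ell f)$ is a half-space double Riesz transform with a Dirichlet or Neumann condition according to the parity of the component; both are bounded on $C^\alpha\cap\mathring{C}^\alpha$ by standard Calder\'on--Zygmund theory, there being no corner. By the ``symmetry reduction'' Lemma \ref{lem:symmetry-general}, in case (iii) $R_{ij}(\chi_\ell f)$ can be singular only near the relevant edge and its $\tilde\calO$-images, so it is enough to bound it there.

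Fix an edge $\vec\frka_j$ and make an orthogonal change of coordinates $x\mapsto\xi$ so that $\vec\frka_j$ becomes the $\xi_3$-axis and the two faces of $\partial\tilde U$ meeting along it become half-planes containing this axis. Expressing $\rd_{x_i}$ and the components of the vector $\tilde f$ in the new frame, $(R_{ij}f)^k$ is a fixed linear combination of the scalars $\rd_{\xi_\mu}\rd_{\xi_\nu}(-\lap_{\bbR^3})^{-1}\tilde f^{\xi_l}$. For the terms with $\mu=3$ or $\nu=3$ the $C^\alpha\cap\mathring{C}^\alpha$ bound holds directly: locally $\partial\tilde U$ is parallel to $\vec\frka_j$, and since the double Riesz kernel has vanishing mean over every hemisphere (equivalently, its average along the $\xi_3$-line is $0$), $\rd_{\xi_3}(-\lap_{\bbR^3})^{-1}$ effectively gains a derivative and these terms are controlled by a ``half-moon'' computation with no corner contribution. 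For the remaining terms, with $\mu,\nu\in\{1,2\}$, I would apply Lemma \ref{lem:symmetry-general} once more to reduce to the two-dimensional double Riesz transforms $R_{\mu\nu}$ acting on the cross-section of $\tilde f^{\xi_l}$, a function on a sector whose opening equals the dihedral angle along $\vec\frka_j$ and which has a definite parity across each reflection plane of $\tilde\calO$ through $\vec\frka_j$, dictated by the extension rule $\tilde f(Rx)=-R\tilde f(x)$.

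At $\vec\frka_3$ and $\vec\frka_4$ the dihedral angles are $\pi/3$ and $\pi/4$ and, respectively, three and four reflection planes of $\tilde\calO$ pass through the edge; the resulting two-dimensional problems are of the type covered by Lemmas \ref{lem:2D-1} and \ref{lem:2D-3} (the component parallel to the edge is odd across every reflection line, hence governed by the Dirichlet problem on a sector of opening $\pi/m$ with $m=3,4$, and the in-plane components split into the corresponding parity classes). No extra vanishing hypothesis on $f$ is needed here: for $m\ge 3$ the relevant eigenfunctions $\sin(mk\theta),\cos(mk\theta)$ ($k\ge 0$, the constant $k=0$ mode having been removed by $f(\mathbf 0)=0$) have degree $mk\ne 2$, so the degree-$2$ resonance responsible for an unbounded $\nabla^2(-\lap)^{-1}$ — the logarithmic singularity of Bahouri--Chemin type — cannot occur, and the transforms stay bounded on $C^\alpha\cap\mathring{C}^\alpha$.

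The crux, and the only place where the hypothesis on $f$ enters, is the right-angle edge $\vec\frka_2$, along which the planes $\{x_3=0\}$ and $\{x_1=x_2\}$ meet at dihedral angle $\pi/2$. Choosing $\xi_1\parallel e_3$, $\xi_2\parallel\tfrac1{\sqrt2}(1,-1,0)$, $\xi_3\parallel\tfrac1{\sqrt2}(1,1,0)$, the rule $\tilde f(Rx)=-R\tilde f(x)$ makes $\tilde f^{\xi_1}$ and $\tilde f^{\xi_2}$ each odd across exactly one of $\{\xi_1=0\},\{\xi_2=0\}$ and even across the other, so the corresponding $2$D problems unfold to a \emph{half-plane} Dirichlet problem and are bounded; but $\tilde f^{\xi_3}=\tfrac1{\sqrt2}(\tilde f^1+\tilde f^2)$, the component \emph{parallel} to $\vec\frka_2$, is odd across \emph{both} planes, so for $\mu,\nu\in\{1,2\}$ the operator $\rd_{\xi_\mu}\rd_{\xi_\nu}(-\lap)^{-1}\tilde f^{\xi_3}$ reduces to $R_{\mu\nu}$ on the positive quadrant $\Omega_2$ applied to the cross-section of $\tilde f^{\xi_3}$, which for generic data need not even lie in $L^\infty(\Omega_2)$, by the classical fact recalled via \cite{Grisvard1985} and the explicit Bahouri--Chemin computation above. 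The assumption ``$f^1+f^2=0$ on $\vec\frka_2$'' says precisely that this cross-section vanishes at the corner of the quadrant for every slice along the edge, and Lemma \ref{lem:2D-2} — whose proof rests on Corollary \ref{cor:BC} and the logarithmic $L^\infty$ estimate of Lemma \ref{lem:C^circlealpha_quad} (the cancellation hypothesis \eqref{eq:cancellation} being available in this situation, and in any case unnecessary for the $\mathring{C}^\alpha_*$ part) — then supplies the $C^\alpha\cap\mathring{C}^\alpha$ bound. Assembling the three edges with cases (i)--(ii) and summing over $\ell$ finishes the proof. I expect the genuinely delicate point to be exactly this last one: verifying that the geometric condition on $f$ along $\vec\frka_2$ is the one that makes the two-dimensional quadrant estimate go through, and tracking it faithfully through the change of frame and the $2$D reduction.
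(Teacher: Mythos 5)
Your proposal follows essentially the same route as the paper: a radially $0$-homogeneous partition of unity localizing near the three singular half-lines (the interior and face pieces being standard), the product rule for $C^\alpha$ against $\mathring{C}^\alpha$ using $f(\mathbf{0})=0$, rotation of coordinates so the edge becomes the new third axis, the symmetry reduction Lemma \ref{lem:symmetry-general} to pass to adjacent extensions, and the reduction of the $\mu,\nu\in\{1,2\}$ transforms to the two-dimensional Lemmas \ref{lem:2D-1}, \ref{lem:2D-2}, \ref{lem:2D-3}, with the hypothesis on $f^1+f^2$ entering exactly where you say it does, namely for the edge-parallel (odd-odd) component along $\vec{\frka}_2$ via the quadrant estimate. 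The only cosmetic imprecision is the opening claim that one ``may assume $\supp f$ lies in a fixed ball'': the far region $\{|y|\ge 2|x|\}$ depends on $x$, so this is not a literal reduction to compact support, but the underlying cancellation from the $\calO$-symmetry is precisely how the paper controls the far field inside each lemma, so nothing is lost.
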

It is important to clarify the definition of $R_{ij}$, which is defined on the vector $f$ (not on individual components $f^k$). Recalling the extension rule for the vorticity in Definition \ref{def:extension}, we first extend $f$ to all of $\bbR^3$ and then apply $\rd_{x_i}\rd_{x_j}(-\lap)_{\bbR^3}^{-1}$, whose $k$-th component is what we define as $(R_{ij}f)^k$.

We note that the functions $f^\ell$ may not be compactly supported. Moreover, it suffices to assume that $f$ vanishes at the origin thanks to the explicit computations given in Subsection \ref{subsec:explicit}. Next, the $C^\alpha$ is trivial in $\tilde{U}$ away from the half-lines generated by $\frka_2 = (\frac{1}{\sqrt{2}},\frac{1}{\sqrt{2}},0)$, $\frka_3 = (\frac{1}{\sqrt{3}},\frac{1}{\sqrt{3}},\frac{1}{\sqrt{3}})$, and $\frka_4 = (1,0,0)$ since otherwise the boundary of $\tilde{U}$ is $C^\infty$-smooth. Let us denote those half-lines by $\vec{\frka}_d$ for $d = 2, 3, 4$. 

Hence it suffices to obtain H\"older estimates close to those half-lines. To this end we consider a partition of unity $\{ \chi_{\frka_d} \}_{d = 2,3,4 }$ on $\tilde{U}$: \begin{equation*}
\begin{split}
\sum_{d =2,3,4} \chi_{\frka_d} \equiv 1, 
\end{split}
\end{equation*} $\chi_{\frka_d}$ is supported in some cone containing $\vec{\frka}_d$ and away from half-lines generated by the others. We may take $\chi_{\frka_d}$ as radially 0-homogeneous functions and impose regularity $\mathring{C}^1(\bbR^3)$. We shall prove Proposition \ref{prop:3D-estimate} with $f$ replaced by $\chi_{\frka_d}f$ for $d = 2, 3, 4$. This is sufficient as we have \begin{equation*}
\begin{split}
 \nrm{\chi_{\frka_d}f}_{C^\alpha\cap\mathring{C}^\alpha(\tilde{U})} \le C \nrm{f}_{C^\alpha\cap\mathring{C}^\alpha(\tilde{U})}
\end{split}
\end{equation*}  as well as \begin{equation*}
\begin{split}
\nrm{f}_{C^\alpha\cap\mathring{C}^\alpha(\tilde{U})} \le \sum_{d} \nrm{\chi_{\frka_d}f }_{C^\alpha\cap\mathring{C}^\alpha(\tilde{U})} .
\end{split}
\end{equation*} The first inequality uses the product rule in $\mathring{C}^\alpha$ as well as the special product rule \begin{equation*}
\begin{split}
\nrm{gf}_{C^\alpha} \le C \nrm{g}_{\mathring{C}^\alpha}\nrm{f}_{C^\alpha}, \quad f(\textbf{0}) = 0. 
\end{split}
\end{equation*} In this section, we shall use the term ``$f$ is supported near $\vec{\frka}_d$'' to mean that $f = \chi_{\frka_d}f$. An alternative way to define this notion is as follows: for any $x \in \tilde{U}\cap\mathrm{supp}(f)$, $d(x,\vec{\frka}_d) \le c\min_{k\ne d}\{ d(x,\vec{\frka}_k) \}$ for some universal $c>0$.

Next, we make a simple observation on the invariance of double Riesz transforms under rotations: if $x, x'$ are two coordinate systems related by a rotation matrix $M$ such that $x' = Mx$, then we have that each double Riesz transform $R_{ij}$ defined in the $x$-coordinates is expressed by a linear combination of Riesz transforms $R_{i'j'}$ defined in the $x'$-coordinates (with coefficients depending only on the elements of $M$). This follows since $(-\lap)^{-1}$ is rotation invariant and one can explicitly represent $\rd_{x_i}\rd_{x_j}$ as a linear combination of second order derivatives in the $x'$-coordinates. Therefore, we have that if for some function $f$ and norm $\nrm{ \cdot }_X$, if $\nrm{R_{ij}f}_{X}\le A$ for all $i,j$ then the same property holds with double Riesz transforms in the $x'$-coordinates with $A$ possibly replaced with $CA$ where $C>0$ is an absolute constant (since the matrix norm of a rotation satisfies $\nrm{M}\lesssim 1$). 

% Symmetry reduction lemma

Restricting the function near the singular half-lines has an additional simplifying consequence, which we explain in detail in the context of two dimensions. Recall from the introduction that the analogous domain to $\tilde{U}$ in 2D is given by $\Omega_4^1 = \{ (r,\theta): 0<\theta < \frac{\pi}{4} \} $. Given $g \in L^\infty(\Omega_4^1)$, the natural extension $\tilde{g}$ is obtained by keep reflecting $g$ along the boundaries of $\Omega_4^i$. That is, \begin{equation*}
\begin{split}
\tilde{g}(x) = \tilde{g}(x^\perp),\quad \tilde{g}(x_1,x_2)=-\tilde{g}(x_1,-x_2).
\end{split}
\end{equation*} Similarly to what we have done in the above, using 0-homogeneous cutoff functions, we can decompose $g=g_1+g_2$ where $g_1$ and $g_2$ are respectively supported near the half-line $\{ r>0,\theta=0 \}$ and $\{ r>0,\theta=\frac{\pi}{4} \}$. Then, we further decompose the   extensions $\tilde{g}_1$ and $\tilde{g}_2$ by \begin{equation*}
\begin{split}
\tilde{g}_1 = \tilde{g}_1^{m} + \tilde{g}_1^{r}, \quad \tilde{g}_2 = \tilde{g}_2^{m} + \tilde{g}_2^{r}
\end{split}
\end{equation*} where $\tilde{g}_1^{m} = \tilde{g}_1 \cdot \textbf{1}_{ \{ (r,\theta): -\frac{\pi}{4}<\theta<\frac{\pi}{4} \} }$ and $\tilde{g}_2^{m} = \tilde{g}_2 \cdot \textbf{1}_{ \{ (r,\theta):0<\theta<\frac{\pi}{2} \} }$. That is, $\tilde{g}_{\ell}^{m}$ is the part of $\tilde{g}_{\ell}$ restricted to fundamental domains \textit{adjacent} to the support of $g_{\ell}$. We have the following support separation property: \begin{equation*}
\begin{split}
x \in \Omega_4^1 \implies d(x,\mathrm{supp}(\tilde{g}_1^{r} )) \gtrsim |x|,
\end{split}
\end{equation*} where $d(x,A)=\inf_{y\in A}|x-y|$ with $A \subset \bbR^2$. The superscripts $m$ and $r$ refer to ``main'' and ``remainder'', respectively; the following proposition tells us why we can regard $\tilde{g}_{\ell}^r$ as a remainder term in $\tilde{g}_{\ell}$. 
\begin{proposition}\label{prop:symmetry}
	Let $g \in C^\alpha\cap\mathring{C}^\alpha(\Omega_4^1)$ satisfy $g(\mathbf{0})=0$. Then we have \begin{equation*}
	\begin{split}
	\nrm{R_{ij} \tilde{g}_{\ell}^{r} }_{\mathring{C}_*^\alpha(\Omega_4^1)} \le C \nrm{g_\ell}_{\mathring{C}_*^\alpha(\Omega_4^1)}, \quad \nrm{R_{ij} \tilde{g}_{\ell}^{r} }_{{C}^\alpha_*(\Omega_4^1)} \le C \nrm{g_\ell}_{{C}^\alpha_*(\Omega_4^1)}
	\end{split} 
	\end{equation*} for any $1\le i,j,\ell\le2$. 
\end{proposition}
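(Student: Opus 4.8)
### Proof proposal for Proposition \ref{prop:symmetry}

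The plan is to exploit the support separation property $d(x,\mathrm{supp}(\tilde g_\ell^r))\gtrsim |x|$ together with the fact that $R_{ij}$ is given by a Calderón--Zygmund kernel which is smooth away from the diagonal. The point is that $R_{ij}\tilde g_\ell^r(x)$ is, for $x\in\Omega_4^1$, an integral of a kernel $K(x,y)$ over $y$ in a region that never comes closer than a distance $\sim|x|$ to $x$; so there is no actual singularity to worry about and the difficulty reduces entirely to the slow decay of $K$ at infinity and, at the lower end, to keeping track of the $|x|$-dependent cutoff scale. First I would reduce to $K(x,y)=(x_i-y_i)(x_j-y_j)/|x-y|^4$ (the $R_{12}$-type kernel; the $R_{11},R_{22}$ kernels are handled identically), and observe that since $g_\ell(\mathbf 0)=0$ we may assume after a $0$-homogeneous decomposition that $g_\ell$ is supported in an annular-type region, or simply carry the factor $|x|^\alpha$ through the estimates using the $\mathring C^\alpha$-norm. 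The basic pointwise input is $|K(x,y)|\lesssim |x-y|^{-2}\lesssim (|x|+|y|)^{-2}$ on $\mathrm{supp}(\tilde g_\ell^r)$, and $|\nabla_x K(x,y)|\lesssim |x-y|^{-3}$ there.

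For the $\mathring C^\alpha_*$-bound, fix $x\ne x'\in\Omega_4^1$ with $|x|\le|x'|$ and estimate $|x|^\alpha R_{ij}\tilde g_\ell^r(x) - |x'|^\alpha R_{ij}\tilde g_\ell^r(x')$ divided by $|x-x'|^\alpha$. I would split into the two familiar regimes: $|x-x'|\ge c|x|$ and $|x-x'|\le c|x|$. In the first regime each term is bounded separately: using $|g_\ell(y)|=|g_\ell(y)-g_\ell(\mathbf 0)\cdot\text{(sign)}|\lesssim |y|^{-\alpha}|y-\mathbf 0|^{2\alpha}$... more cleanly, $|\tilde g_\ell^r(y)|\lesssim |y|^{\cdot}$ via $\mathring C^\alpha$ and the vanishing at $\mathbf 0$, one gets $\||x|^\alpha R_{ij}\tilde g_\ell^r\|_{L^\infty}\lesssim\|g_\ell\|_{\mathring C^\alpha_*}$ by breaking the $y$-integral at $|y|\sim|x|$ and at $|y|\sim 1$ and using the decay of $K$ — this handles the first regime since $|x|\sim|x-x'|$ there. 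In the second regime $|x-x'|\le c|x|$ the whole domain of integration is at distance $\gtrsim|x|$ from both $x$ and $x'$, so I would write the difference as $\int (K(x,y)-K(x',y))\tilde g_\ell^r(y)\,dy$ plus the easy term coming from $|x|^\alpha-|x'|^\alpha$; the mean value theorem gives $|K(x,y)-K(x',y)|\lesssim |x-x'|\,|x-y|^{-3}\lesssim |x-x'|(|x|+|y|)^{-3}$, and integrating against $|\tilde g_\ell^r(y)|$ (split again at $|y|\sim|x|$) produces a factor $|x-x'|\cdot|x|^{-1}$, which combined with the $|x|^\alpha$ prefactor and $|x-x'|^{1-\alpha}\le|x|^{1-\alpha}$ yields the bound $\lesssim\|g_\ell\|_{\mathring C^\alpha_*}|x-x'|^\alpha$. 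The $C^\alpha_*$-bound is identical but simpler, replacing the $\mathring C^\alpha_*$-norm and the weight $|x|^\alpha$ by the ordinary $C^\alpha_*$-norm and no weight, and using $\|\tilde g_\ell^r\|_{L^\infty}\lesssim\|g_\ell\|_{C^\alpha_*}$ near $\mathbf 0$ together with boundedness elsewhere.

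One technical point I would be careful about: $\tilde g_\ell^r$ need not be continuous across the reflection hyperplanes interior to its support, but since $R_{ij}$ is applied to the already-extended function on $\bbR^3$ (resp. $\bbR^2$) as an honest convolution operator, and since on $\Omega_4^1$ the relevant kernel sees $\tilde g_\ell^r$ only away from the diagonal, these jumps cause no problem — no principal value subtlety arises because $x\notin\overline{\mathrm{supp}(\tilde g_\ell^r)}$. The main obstacle, and the reason this proposition is separated out, is really bookkeeping: correctly propagating the vanishing of $g_\ell$ at $\mathbf 0$ through the extension (so that $|\tilde g_\ell^r(y)|\lesssim \min\{|y|^\alpha,1\}\|g_\ell\|$ up to the $\mathring C^\alpha$ weight) and making sure the split of the $y$-integral at the scales $|y|\sim|x|$ and $|y|\sim 1$ interacts correctly with the $|x|^\alpha$ weight in the $\mathring C^\alpha_*$ estimate; once the two regimes and the two dyadic cuts are set up, every individual integral is an elementary power-law computation. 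I expect no genuinely hard analytic step here — the content is that moving the support away from the singular half-line converts a Calderón--Zygmund estimate into a soft kernel-decay estimate.
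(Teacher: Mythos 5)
Your overall strategy is the right one and is essentially the paper's: the paper proves Proposition \ref{prop:symmetry} as an immediate corollary of the general ``symmetry reduction'' Lemma \ref{lem:symmetry-general}, whose proof is exactly the two-regime argument you describe (split according to whether $|x-x'|$ is small or large compared with the smaller of $|x|,|x'|$, use the support separation to replace the Calder\'on--Zygmund singularity by kernel decay, and use the mean value theorem on $K$ when the two points are close). Your observation that $g(\mathbf{0})=0$ gives $|\tilde g_\ell^r(y)|\le C\nrm{g_\ell}_{C^\alpha_*}\min\{|y|^\alpha,1\}$ is precisely the input the paper feeds into that lemma.

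There is, however, one genuine gap, in your first regime $|x-x'|\gtrsim|x|$. You propose to bound each of the two terms separately via an $L^\infty$-type estimate on $|x|^\alpha R_{ij}\tilde g_\ell^r$, obtained ``by breaking the $y$-integral at $|y|\sim|x|$ and at $|y|\sim 1$ and using the decay of $K$.'' This cannot work as stated: the proposition does not assume $g$ compactly supported, and the kernel decays only like $|y|^{-n}$, so $\int_{|y|>1}|K(x-y)|\,|\tilde g_\ell^r(y)|\,dy$ diverges logarithmically. Worse, the divergence is not cured by the principal value: the cancellation that makes $R_{ij}$ of the \emph{full} extension converge at infinity (the vanishing of the angular integrals against second-order harmonics, cf.\ Section \ref{sec:expansion}) is destroyed once the adjacent sectors are removed, so for, say, $g$ equal to a nonzero constant away from the origin, $\int_{1<|y|<R}K(x-y)\tilde g_\ell^r(y)\,dy$ genuinely grows like $\log R$. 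Consequently no pointwise bound on $R_{ij}\tilde g_\ell^r$ is available (the paper's Lemma \ref{lem:symmetry-general} asserts an $L^\infty$ bound only under the extra hypothesis $\supp h\subset B_0(R)$, with a constant depending on $R$). The fix is the one the paper uses in its case (ii): even when $|x-x'|\gtrsim|x'|$, bound the two integrals separately only over the near region $\{|x'-y|<10|x-x'|\}$ (where the power counting you indicate does close), and over the far region $\{|x'-y|\ge 10|x-x'|\}$ keep the two integrals combined and apply the mean value theorem, so that the kernel difference supplies the extra power $|x-x'|\,|x'-y|^{-n-1}$ needed for convergence at infinity. Your second regime, and the rest of the bookkeeping, is fine.
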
 % general SIO should be OK. 

The symmetry reduction lemma will be an immediate consequence of the following general estimate in $\bbR^n$. The statements as well as the proofs will be referred in later sections frequently. 
\begin{lemma}[Symmetry reduction lemma]\label{lem:symmetry-general}
	In $\bbR^n$, let $T$ be the convolution operator against a kernel $K$ satisfying \begin{equation*}
	\begin{split}
	|K(z)|\le C|z|^{-n},\quad |\nabla K(z)| \le C|z|^{-n-1}
	\end{split}
	\end{equation*} and $h: \bbR^n\rightarrow\bbR$ satisfy \begin{equation*}
	\begin{split}
	|h(x)| \le A|x|^\alpha,\quad \forall x \in \bbR^n
	\end{split}
	\end{equation*} for some constant $A>0$. Finally, let $\Omega\subset \bbR^n$ be a convex cone\footnote{This means that if $x\in\Omega$, then $\lambda x \in \Omega$ for all $\lambda>0$.} satisfying the following support separation property: \begin{equation*}
	\begin{split}
	\begin{split}
	x \in \Omega \implies d(x,\mathrm{supp}(h)) \ge c |x|
	\end{split}
	\end{split}
	\end{equation*} where $c>0$ is some universal constant. Then, we have that \begin{equation*}
	\begin{split}
	\nrm{T[h]}_{C^\alpha_*(\Omega)}\le CA,\quad \nrm{T[h]}_{\mathring{C}^\alpha_*(\Omega)}\le C. 
	\end{split}
	\end{equation*} Furthermore, if $\mathrm{supp}(h) \subset B_0(R)$, then \begin{equation*}
	\begin{split}
	\nrm{T[h]}_{L^\infty(\Omega)} \le C(R)(1+A). 
	\end{split}
	\end{equation*}
\end{lemma}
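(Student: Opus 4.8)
The plan is to exploit the one structural feature hidden in the hypotheses: because of the support-separation property, for every $x\in\Omega$ and every $y\in\mathrm{supp}(h)$ one has $|x-y|\ge c|x|$, so the kernel $K(x-\cdot)$ is never evaluated near its singularity when paired with $h$. Thus $T[h]$ has no ``diagonal'' singularity on $\Omega$, and the only delicate point is spatial infinity, which will be handled either by the decay $|K|\lesssim|z|^{-n}$ (when $h$ has compact support) or by passing to differences and using $|\nabla K|\lesssim|z|^{-n-1}$ together with the elementary observation that, for $y\in\mathrm{supp}(h)$ and $x\in\Omega$, $|y|\le|x|+|x-y|\le(1+c^{-1})|x-y|$, so that $|h(y)|\le A|y|^\alpha\lesssim A|x-y|^\alpha$ there. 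The workhorse estimate is a dyadic decomposition of $\mathrm{supp}(h)\cap\{|x-y|\ge c|x|\}$ into shells $S_k=\{\,2^k\le|x-y|<2^{k+1}\,\}$: on $S_k$ one has $|K|\lesssim2^{-kn}$, $|h|\lesssim A2^{k\alpha}$, $|S_k|\lesssim2^{kn}$, so $S_k$ contributes $\lesssim A2^{k\alpha}$ — a geometric series in $k$. This already gives the $L^\infty$ bound: if $\mathrm{supp}(h)\subset B_0(R)$ and $|x|\le4R$, then only the shells with $c|x|/2\le2^k\le4R$ meet $\mathrm{supp}(h)$ and the geometric sum is $\lesssim AR^\alpha$; if $|x|>4R$, then $|x-y|\gtrsim|x|$ on $\mathrm{supp}(h)$ and $|T[h](x)|\lesssim|x|^{-n}AR^{n+\alpha}\lesssim A$. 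Hence $\|T[h]\|_{L^\infty(\Omega)}\le C(R)(1+A)$.

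For the seminorm bounds, fix $x\ne x'\in\Omega$, put $\delta=|x-x'|$, and assume $|x|\le|x'|$ (hence $|x'|\le|x|+\delta$). Fix a large absolute constant $\Lambda$ and split the $y$-integral into the far region $F=\{\,|x-y|\ge\Lambda\delta\,\}$ and its complement. On $F$ we have $|x'-y|\simeq|x-y|$, and since $\Omega$ is convex the segment $[x,x']$ stays in $\Omega$, so the mean value theorem gives $|K(x-y)-K(x'-y)|\lesssim\delta|x-y|^{-n-1}$; together with $|h(y)|\lesssim A|x-y|^\alpha$ and, crucially, $\alpha<1$, this yields
\[ \int_{F\cap\mathrm{supp}(h)}\!\!|K(x-y)-K(x'-y)|\,|h(y)|\,dy\ \lesssim\ A\delta\!\int_{\max(c|x|,\Lambda\delta)}^{\infty}\!\!\rho^{\alpha-2}\,d\rho\ \lesssim\ A\delta\big(\max(c|x|,\Lambda\delta)\big)^{\alpha-1}\ \lesssim\ A\delta^{\alpha}, \]
using $\delta\rho^{\alpha-1}\le\delta^{\alpha}$ whenever $\rho\ge\delta$. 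The complementary region $\{|x-y|<\Lambda\delta\}$ meets $\mathrm{supp}(h)$ only if $\Lambda\delta\ge c|x|$, i.e.\ only when $|x|\lesssim\delta$ (so that $|x'|\lesssim\delta$ as well); there we bound the two pieces $\int K(x-y)h(y)\,dy$ and $\int K(x'-y)h(y)\,dy$ separately by the dyadic-shell argument, now with top scale $2^k\simeq\delta$, each contributing $\lesssim A\delta^{\alpha}$. Adding up, $|T[h](x)-T[h](x')|\le CA\delta^{\alpha}$, which is the $C^\alpha_*$ estimate. The $\mathring C^\alpha_*$ estimate is obtained by running the identical decomposition with the weight $|x|^\alpha$ carried along, using in addition the elementary inequality $\big||x|^\alpha-|x'|^\alpha\big|\le|x-x'|^\alpha$ to absorb the weight mismatch; modulo fixing the free additive constant of $T[h]$ (which in the applications is pinned down by the principal-value definition of the Riesz transforms and their spherical cancellation), this is routine.

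The main obstacle is not computational but a matter of keeping everything finite: for $h$ without compact support the integral defining $T[h]$ is only conditionally convergent, since $|K(x-y)h(y)|$ decays merely like $|x-y|^{-n+\alpha}$ at infinity and $\alpha>0$. This is exactly why the seminorm estimates are built around the \emph{difference} $T[h](x)-T[h](x')$: the mean value theorem supplies the missing factor $|x-y|^{-1}$, and absolute convergence is restored precisely because $\alpha<1$; correspondingly, the $L^\infty$ statement is asserted only under the compact-support hypothesis. The remaining care is bookkeeping — organizing the case split $\delta\gtrsim|x|$ versus $\delta\ll|x|$ and checking that every geometric series is summed between the right top and bottom scales, so that the final bounds come out scale-invariant, as they must given the homogeneity of all the hypotheses.
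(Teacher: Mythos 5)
Your proof is correct and follows essentially the same strategy as the paper's: the support-separation property keeps the kernel away from its singularity, the far region $\{|x-y|\gtrsim|x-x'|\}$ is handled by the mean value theorem and the decay $|\nabla K|\lesssim|z|^{-n-1}$, and the near region is treated by direct size estimates after observing it only meets $\mathrm{supp}(h)$ when $|x|\lesssim|x-x'|$ (the paper phrases this same dichotomy as a case split on $|x'|$ versus $\tfrac{1}{10}|x-x'|$). You also supply the $L^\infty$ bound via dyadic shells, which the paper states but omits the proof of, so no gap there.
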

\begin{proof}
	Take some $x \ne x' \in \Omega$ and let us estimate \begin{equation*}
	\begin{split}
	\int_{\bbR^n} K(x-y)h(y)dy - \int_{\bbR^n} K(x'-y)h(y)dy,
	\end{split}
	\end{equation*} We may assume, without loss of generality, that $|x'|\le|x|$. We consider two cases: (i) $|x'|>\frac{1}{10}|x-x'|$ and (ii) $|x'|\le \frac{1}{10}|x-x'|$. 
	
	In the case (i), we have that $|x'|>\frac{1}{2}|x|$. We then directly estimate \begin{equation*}
	\begin{split}
	\left|\int_{\bbR^n} K(x-y)h(y)dy - \int_{\bbR^n} K(x'-y)h(y)dy\right| \le A|x-x'| \int_{ \mathrm{supp}(h) } |\nabla K(x^*-y)| |y|^\alpha dy ,
	\end{split}
	\end{equation*} where $x^* = \lambda x + (1-\lambda)x'$ for some $0\le\lambda=\lambda(y)\le1$. We have that $|x^*|\approx |x'|\approx |x|$. Moreover, from the support separation property, $|x^*-y|\gtrsim|x|$ for $y \in \mathrm{supp}(h)$. Hence, we bound the above by \begin{equation*}
	\begin{split}
	&\le CA|x-x'|\left[\int_{ \mathrm{supp}(h)\cap  \{ |y|\le 10|x'| \} } |x^*-y|^{-n-1}|y|^\alpha dy +  \int_{ \{ |y|> 10|x'| \} } |x^*-y|^{-n-1}|y|^\alpha dy\right] \\
	&\le CA|x-x'|\left[ |x'|^n|x|^{-n-1}|x'|^\alpha + |x'|^{\alpha-1} \right] \le CA|x-x'|^\alpha. 
	\end{split}
	\end{equation*} We now treat the case (ii). Then we consider the integral \begin{equation*}
	\begin{split}
	\int_{ \{ |x'-y| <10|x-x'| \} } K(x'-y)h(y)dy = \left[\int_{ \{ |x'-y| <10|x-x'| \} \cap \{ |y|\le 2|x'| \} } + \int_{ \{ |x'-y| <10|x-x'| \} \cap \{ |y|> 2|x'| \} }\right] K(x'-y)h(y)dy. 
	\end{split}
	\end{equation*} The first term is bounded in absolute value by \begin{equation*}
	\begin{split}
	CA|x'|^n|x'|^{-n}|x'|^\alpha \le CA|x-x'|^\alpha 
	\end{split}
	\end{equation*} where we have used the support separation property to deduce $|K(x'-y)|\le C|x'-y|^{-n} \le C|x'|^{-n}$. On the other hand, in the second region we have $|x'-y|\ge c|y|$ and then the integral in absolute value is bounded by \begin{equation*}
	\begin{split}
	CA\int_{ 2|x'|<|y|<20|x'-x|} |y|^{\alpha-n} dy \le CA|x-x'|^\alpha. 
	\end{split}
	\end{equation*} Similarly, one can estimate \begin{equation*}
	\begin{split}
	\left|\int_{ \{ |x'-y| <10|x-x'| \} } K(x-y)h(y)dy \right|\le CA|x-x'|^\alpha. 
	\end{split}
	\end{equation*} In the region $ \{ |x'-y| \ge 10|x-x'| \}$, we combine the integrals to bound \begin{equation*}
	\begin{split}
	A|x-x'|\int_{  \{ |x'-y| \ge 10|x-x'| \} \cap \mathrm{supp}(h) } |\nabla K(x^*-y)||y|^\alpha dy. 
	\end{split}
	\end{equation*} We then observe for $y \in \mathrm{supp}(h)$ and satisfying $ \{ |x'-y| <10|x-x'| \}$, $|x^*-y| \ge \frac{1}{2}|x'-y|$. Then, we bound the above simply by \begin{equation*}
	\begin{split}
	&\le CA|x-x'|\int_{  \{ |x'-y| \ge 10|x-x'| \} } |x'-y|^{-n-1}(|x'-y|^\alpha + |x'|^\alpha ) dy \\
	&\le CA|x-x'|\left( |x-x'|^{\alpha-1} + |x'|^\alpha|x-x'|^{-1} \right) \le CA|x-x'|^\alpha 
	\end{split}
	\end{equation*} since $|x'|\le C|x-x'|$. The proof of the $C^\alpha_*$-estimate is complete.

	We omit the proof of $\mathring{C}^\alpha_*$ and $L^\infty$ bounds, which can be done in a similar way. 
\end{proof}

\begin{proof}[Proof of Proposition \ref{prop:symmetry}]
	To deduce Proposition \ref{prop:symmetry} from Lemma \ref{lem:symmetry-general}, one just needs to observe that $\Omega = \Omega_4^1$ and the singular integral kernel for $R_{ij}$ satisfy the assumptions of Lemma \ref{lem:symmetry-general} and $g(\mathbf{0})=0$, $g_{\ell} \in C^\alpha_*$ imply $|\tilde{g}_{\ell}^r(x)|\le C \nrm{g_{\ell}}_{C^\alpha_*}|x|^\alpha$. 
\end{proof}

\begin{definition}\label{def:extension}
	In the following, given $f \in (C^\alpha\cap\mathring{C}^\alpha(\tilde{U}))^3$, we denote $\tilde{f}^{m}$ to be the extension (governed by the reflection rule of $\tilde{O}$) of $f$ restricted to fundamental domains \textit{adjacent} to the support of $f$. As usual, $\tilde{f}$ is the full extension of $f$ onto $\bbR^3$. We accordingly define the adjacent extension of $\mathbf{1}_{\tilde{U}}$ to be $\tilde{\textbf{1}}^m$. Note that the definition of $\tilde{\textbf{1}}^m$ depends on the support of $f$. 
\end{definition}

\subsubsection{Estimates near  $\vec{\frka}_2$ }\label{subsubsec:est2}

In this section, we consider $f$ defined near $\vec{\frka}_2$ and away from $\vec{\frka}_3$, $\vec{\frka}_4$. With a rotation in $\bbR^3$, we consider the new orthogonal coordinate system with basis $\{ e_1=(\frac{1}{\sqrt{2}},-\frac{1}{\sqrt{2}},0 ),e_2=(0,0,1),e_3=(-\frac{1}{\sqrt{2}},-\frac{1}{\sqrt{2}},0 )  \}$. We shall write $x=(x_1,x_2,x_3)$ as well as $f = (f^1,f^2,f^3)$ with respect to this new system. Then, the assumption in Proposition \ref{prop:3D-estimate} translates to that $f^3$ is vanishing on $\{ x_2=x_1=0 \}$. For $1\le k\le 3$, we define $\tilde{f}^{k,m} \in (L^\infty(\bbR^3))^3$ to be the adjacent extension of $f^ke_k$ into $\bbR^3$ as in Definition \ref{def:extension}. To avoid confusion, we explicitly write out the extension rules: \begin{equation*}
\begin{split}
\tilde{f}^{3,m}(x_1,x_2,x_3) &:= f^3(x_1,x_2,x_3)\mathbf{1}_{ \{x_1,x_2>0\} } - f^3(-x_1,x_2,x_3)\mathbf{1}_{ \{x_1<0,x_2>0\} } \\ &\quad -f^3(x_1,-x_2,x_3)\mathbf{1}_{ \{x_1>0,x_2<0 \}}+f^3(-x_1,-x_2,x_3)\mathbf{1}_{ \{x_1,x_2<0\} },
\end{split}
\end{equation*} \begin{equation*}
\begin{split}
\tilde{f}^{2,m}(x_1,x_2,x_3) &:= f^2(x_1,x_2,x_3)\mathbf{1}_{ \{x_1,x_2>0\} } - f^2(-x_1,x_2,x_3)\mathbf{1}_{ \{x_1<0,x_2>0\} } \\ &\quad +f^2(x_1,-x_2,x_3)\mathbf{1}_{ \{x_1>0,x_2<0 \}}-f^2(-x_1,-x_2,x_3)\mathbf{1}_{ \{x_1,x_2<0\} },
\end{split}
\end{equation*}\begin{equation*}
\begin{split}
\tilde{f}^{1,m}(x_1,x_2,x_3) &:= f^1(x_1,x_2,x_3)\mathbf{1}_{ \{x_1,x_2>0\} } + f^1(-x_1,x_2,x_3)\mathbf{1}_{ \{x_1<0,x_2>0\} } \\ &\quad -f^1(x_1,-x_2,x_3)\mathbf{1}_{ \{x_1>0,x_2<0 \}}-f^1(-x_1,-x_2,x_3)\mathbf{1}_{ \{x_1,x_2<0\} }.
\end{split}
\end{equation*} That is, $\tilde{f}^{3,m}$, $\tilde{f}^{2,m}$, and $\tilde{f}^{1,m}$ are respectively odd-odd, odd-even, even-odd in $(x_1,x_2)$. The same is true for the full extensions $\tilde{f}^3, \tilde{f}^2,$ and $\tilde{f}^1$. To establish Proposition \ref{prop:3D-estimate} near $\vec{\frka}_2$, we need to prove 
\begin{lemma}\label{lem:est-2}
	Under the same assumptions as in Proposition \ref{prop:3D-estimate}, for any $1\le i,j,k\le3$, we have with $R_{ij} = \rd_{x_i}\rd_{x_j}(-\lap_{\bbR^3})^{-1}$, \begin{equation*}
	\begin{split}
	\nrm{R_{ij} \tilde{f}^k }_{C^\alpha\cap\mathring{C}^\alpha(\tilde{U})} \le C \nrm{f^k}_{C^\alpha\cap\mathring{C}^\alpha(\tilde{U})}. 
	\end{split}
	\end{equation*}
\end{lemma}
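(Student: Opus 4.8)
The plan is to reduce, via the symmetry reduction Lemma~\ref{lem:symmetry-general} and a final reduction to a two-dimensional problem on the slices $\{x_3=\text{const}\}$, to the planar H\"older estimates of Lemmas~\ref{lem:2D-1}, \ref{lem:2D-2}, \ref{lem:2D-3}. First I work in the rotated coordinates introduced before the statement, so that $\vec{\frka}_2$ is the $x_3$-axis, $\tilde U$ near $\vec{\frka}_2$ coincides with $Q\times\bbR$ with $Q=\{x_1>0,x_2>0\}$, and the reflections of $\tilde{\calO}$ that are relevant near $\vec{\frka}_2$ act only in $(x_1,x_2)$. After multiplying by the $0$-homogeneous cutoff $\chi_{\frka_2}$ I may assume $f$ is supported in a thin $0$-homogeneous cone about $\vec{\frka}_2$, and using the explicit computation of Subsection~\ref{subsubsec:explicit} I may assume $f(\mathbf0)=0$, whence $|\tilde f^k(x)|\le C\nrm{f^k}_{C^\alpha_*(\tilde U)}|x|^\alpha$. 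Split $\tilde f^k=\tilde f^{k,m}+\tilde f^{k,r}$ as in Definition~\ref{def:extension}. Since $\supp(\tilde f^{k,r})$ lies in the $0$-homogeneous cones about the other images of $\vec{\frka}_2$, which make a positive angle with $\vec{\frka}_2$, the support separation hypothesis of Lemma~\ref{lem:symmetry-general} holds on a cone $\Omega$ containing $\tilde U\cap\{\text{near }\vec{\frka}_2\}$; by the same lemma $R_{ij}\tilde f^{k,m}$ is also already controlled on $\tilde U$ away from $\vec{\frka}_2$. Thus it remains to estimate $R_{ij}\tilde f^{k,m}$ on $\Omega$. By the explicit extension formulas recorded above, each component of $\tilde f^{k,m}$ equals $\pm f^k(|x_1|,|x_2|,x_3)$ with a fixed parity pattern in $(x_1,x_2)$ — odd--odd if $k=3$, and even--odd or odd--even if $k\in\{1,2\}$ — so $R_{ij}$ may be treated componentwise, and a standard rescaling lets me restrict to $|x|\approx 1$.

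Next comes the key step, ``freezing'' the $x_3$-variable. Fix $x=(x',x_3)\in\Omega$, write $K_{ij}$ for the $\bbR^3$-kernel of $R_{ij}$ (principal value part plus the $-\tfrac13\delta_{ij}$ Dirac mass), and decompose
\begin{equation*}
R_{ij}\tilde f^{k,m}(x)=E^k_{ij}(x)+c_{ij}\,R^{2D}_{ij}[F_{x_3}](x')-\tfrac13\delta_{ij}\tilde f^{k,m}(x),
\end{equation*}
where $F_{x_3}(y'):=\tilde f^{k,m}(y',x_3)$, $R^{2D}_{ij}=\rd_{x_i}\rd_{x_j}(-\lap_{\bbR^2})^{-1}$, $c_{ij}:=\int_{\bbR}K_{ij}(z',z_3)\,dz_3$, and
\begin{equation*}
E^k_{ij}(x):=\int_{\bbR^3}K_{ij}(z)\big(\tilde f^{k,m}(x-z)-\tilde f^{k,m}(x'-z',x_3)\big)\,dz.
\end{equation*}
A direct computation gives $c_{ij}=0$ whenever $i=3$ or $j=3$ (this is the quantitative form of ``$\partial\tilde U$ is parallel to $\vec{\frka}_2$ near $\vec{\frka}_2$''), and $c_{ij}$ equals a nonzero multiple of the planar double Riesz kernel otherwise; hence the transforms involving $\rd_{x_3}$ produce no two-dimensional term at all. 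The error $E^k_{ij}$ is absolutely convergent near $z=0$ because the inner difference is bounded by $\nrm{f^k}_{C^\alpha_*(\tilde U)}|z_3|^\alpha$ there — here one uses crucially that the extension of $f^k$ in the $x_3$-slot is \emph{trivial}, so $f^k$ keeps its $C^\alpha$-modulus of continuity in that variable — while for $|z|\gtrsim|x|$ one subtracts further homogeneous terms and invokes the angular cancellations of $\tilde f^{k,m}$, exactly as in the proofs of Lemma~\ref{lem:3D_vel_expansion} and Lemma~\ref{lem:symmetry-general}. The H\"older and $\mathring C^\alpha$ differences $E^k_{ij}(x)-E^k_{ij}(\bar x)$ are estimated by the same splittings, additionally using $\nrm{F_{x_3}-F_{\bar x_3}}_{C^\alpha}\lesssim|x_3-\bar x_3|^\alpha\nrm{f^k}_{C^\alpha(\tilde U)}$ when $x_3\ne\bar x_3$.

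It then remains to bound the planar operator $R^{2D}_{ij}[F_{x_3}]$ on $Q$ for $(i,j)\in\{(1,1),(2,2),(1,2)\}$, uniformly in $x_3$ (which satisfies $|x_3|\lesssim1$ after rescaling). If $k\in\{1,2\}$, $F_{x_3}$ is even--odd or odd--even on $Q$; a single even reflection turns it into a $C^\alpha\cap\mathring C^\alpha$ function on a half-plane, whose boundary is smooth, so Lemma~\ref{lem:2D-1} (Schauder) yields the estimate with no vanishing hypothesis. If $k=3$, $F_{x_3}$ is an odd--odd (Bahouri--Chemin--type) profile on the quadrant $\Omega_2$, for which $R^{2D}_{ij}F_{x_3}$ is in general unbounded (cf.\ Corollary~\ref{cor:BC}); this is exactly where the hypothesis of Proposition~\ref{prop:3D-estimate} enters, for in the rotated coordinates it says $f^3$ vanishes on $\{x_1=x_2=0\}$, i.e.\ $F_{x_3}(\mathbf0)=f^3(\mathbf0',x_3)=0$, which is precisely the hypothesis of Lemma~\ref{lem:2D-2}. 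That lemma delivers the $C^\alpha$-bound (its constant depends only on the support radius, which is $\lesssim1$ after rescaling), while the $\mathring C^\alpha_*$-bound requires no vanishing at all since $\ln|x|\in\mathring C^\alpha$ (Subsection~\ref{subsubsec:explicit}) and is obtained by an argument parallel to Lemma~\ref{lem:2D-3}. The slice-dependence is Lipschitz-$\alpha$ in $x_3$ by the previous step, so these bounds patch; the $L^\infty$-bound follows from the $\mathring C^\alpha_*$-bound together with $|x|^\alpha R_{ij}\tilde f^{k,m}(x)\to0$ as $x\to\mathbf0$, a consequence of $f^k(\mathbf0)=0$ and scaling. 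The main obstacle is precisely this last case $k=3$, $(i,j)\in\{(1,1),(2,2),(1,2)\}$: one must verify that the logarithmic Bahouri--Chemin divergence is annihilated by the single scalar vanishing condition, and that all the errors incurred in passing from the genuinely three-dimensional operator to the frozen planar operator — the $z_3$-subtraction term $E^k_{ij}$, its far-field part, and the $x_3$-dependence of the slices $F_{x_3}$ — are controlled by $\nrm{f^k}_{C^\alpha\cap\mathring C^\alpha(\tilde U)}$ with constants independent of the evaluation point.
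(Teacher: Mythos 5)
Your overall strategy coincides with the paper's: rotate so that $\vec{\frka}_2$ is the new $x_3$-axis, discard $\tilde f^{k,r}$ via Lemma \ref{lem:symmetry-general}, reduce the horizontal singularity to the 2D quadrant lemmas, and let the vanishing of $f^3$ on the axis feed into Lemma \ref{lem:2D-2}. However, your central device --- freezing the entire slice $F_{x_3}$ and writing $R_{ij}\tilde f^{k,m}=E^k_{ij}+c_{ij}R^{2D}_{ij}[F_{x_3}]-\tfrac13\delta_{ij}\tilde f^{k,m}$ --- has a genuine gap at the H\"older endpoint when two slices are compared. The inequality you invoke, $\nrm{F_{x_3}-F_{\bar x_3}}_{C^\alpha}\lesssim|x_3-\bar x_3|^\alpha\nrm{f^k}_{C^\alpha(\tilde U)}$, is false: for $f\in C^\alpha$ one only gets $\nrm{F_{x_3}-F_{\bar x_3}}_{L^\infty}\lesssim\delta^\alpha$ and $\nrm{F_{x_3}-F_{\bar x_3}}_{C^\alpha_*}\lesssim1$ with $\delta=|x_3-\bar x_3|$ (test on $f=|y_1-y_3|^\alpha$: the increment of the slice difference between $y_1=x_3$ and $y_1=\bar x_3$ equals $2\delta^\alpha$ over a horizontal distance $\delta$). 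Consequently the best available bound for $R^{2D}_{ij}[F_{x_3}-F_{\bar x_3}]$ in $L^\infty$ is the logarithmic interpolation $\lesssim\delta^\alpha\log(1/\delta)$, and this is saturated by Bahouri--Chemin-type slice differences concentrated at scale $\delta$; hence neither $c_{ij}(R^{2D}_{ij}F_{x_3}-R^{2D}_{ij}F_{\bar x_3})$ nor the compensating difference $E^k_{ij}(x)-E^k_{ij}(\bar x)$ can be estimated separately by $C\delta^\alpha$. The decomposition manufactures spurious logarithms that cancel only in the sum. The paper avoids this by freezing not the slice but only the sign pattern: it subtracts $f(x)\tilde{\bf 1}(y)$, which is \emph{exactly} independent of $y_3$, so the $y_3$-integration of the kernel is exact, the frozen 2D object $\int K_2(\cdot-y_h)\,\mathrm{sgn}(y_1y_2)\,dy_h$ carries no $x_3$-dependence at all, and the remainder $\tilde f(y)-f(x)\tilde{\bf 1}(y)$ is controlled pointwise by $\nrm{f}_{C^\alpha_*}|x-y|^\alpha$, which treats horizontal and vertical increments on the same footing.

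Two smaller points. First, the far-field convergence of $E^k_{ij}$ is asserted via ``angular cancellations of $\tilde f^{k,m}$,'' but the spherical cancellations of Section \ref{sec:expansion} hold for the full extension $\tilde f^k$, not for the adjacent extension (e.g.\ $\int y_1y_2|y|^{-5}\tilde f^{3,m}\,d\sigma$ has no symmetry reason to vanish, since odd-odd times odd-odd is even-even); this is precisely why the paper runs the $L^\infty$ bound on the full extension and reserves the reduction to $\tilde f^{k,m}$ for difference estimates, where $|\nb K|$ supplies the extra decay. Second, deducing the $L^\infty$ bound from the $\mathring C^\alpha_*$ bound requires an independent proof that $|x|^\alpha R_{ij}\tilde f^{k,m}(x)\to0$ as $x\to\mathbf{0}$; as written, that step is dangerously close to assuming the boundedness you are trying to prove.
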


\begin{remark}
	As an immediate consequence of Lemma \ref{lem:symmetry-general}, it is sufficient to $R_{ij} \tilde{f}^k$ only near $\vec{\frka}_2$. 
\end{remark}

\begin{proof}[Proof in the case $k=3$]
	We consider $\tilde{f} := \tilde{f}^k$ which is vanishing on $\{ x_2=x_1=0 \}$ from the assumption. Note that $\tilde{f}$ is odd in both $x_1$ and $x_2$. With slight abuse of notation, we shall write $x_h := (x_1,x_2) = (x_1,x_2,0)$ and $y_h:=(y_1,y_2)=(y_1,y_2,0)$ given $x = (x_1,x_2,x_3)$ and $y=(y_1,y_2,y_3)$ ($h$ stands for ``horizontal''). We also write $|x-y|_h := |x_h-y_h|$. 
	
	\medskip
	
	\noindent (i) $L^\infty$ bound 
	
	\medskip
	
	\noindent We fix some $x \in \tilde{U}$ and consider 
	%\begin{equation*}
	%\begin{split}
	%\int_{\bbR^3} K(x-y)\tilde{f}(y) dy = \left[ \int_{ \{|x-y|_h <1 \}} + \int_{ \{ 1\le |x-y|_h <10R \} } + \int_{ \{10R \le |x-y|_h\} } \right] K(x-y)\tilde{f}(y) dy = I + II + III
	%\end{split}
	%\end{equation*} 
	\begin{equation*}
	\begin{split}
	\int_{\bbR^3} K(x-y)\tilde{f}(y) dy = \left[ \int_{ \{|x-y|  <\frac{|x|}{2} \}} + \int_{ \{ |x-y|\ge \frac{|x|}{2}, |y|\le 10|x|    \} } + \int_{ \{|y|>10|x|\} } \right] K(x-y)\tilde{f}(y) dy = I + II + III
	\end{split}
	\end{equation*} where $K$ is the kernel for $R_{ij}$ with some $1\le i,j\le3$. We note that \begin{equation*}
	\begin{split}
	K(z) = \frac{p_{ij}(z)}{|z|^5}, 
	\end{split}
	\end{equation*} where $p_{ij}(\cdot)$ is a homogeneous polynomial of degree 2. Now recall from Section \ref{sec:expansion} that the integral of $\tilde{f}$ against any second order polynomial on spheres centered at the origin vanishes. Hence, \begin{equation*}
	\begin{split}
	|III| = \left|\int_{ \{|y|>10|x|\}}[K(x-y)-K(y)]\tilde{f}(y) dy \right| \le C\nrm{f}_{L^\infty}|x| \int_{ \{|y|>10|x|\}} |y|^{-4} dy\le C\nrm{f}_{L^\infty}. 
	\end{split}
	\end{equation*}
	In the second region, one use simply that $|K(z)|\le C|z|^{-3}$: \begin{equation*}
	\begin{split}
	II\le \nrm{\tilde{f}}_{L^\infty} \int_{ \{ |x-y|\ge \frac{|x|}{2}, |y|\le 10|x|    \}  }|x-y|^{-3}dy \le C \nrm{{f}}_{L^\infty}.
	\end{split}
	\end{equation*}
	It only remains to bound the local region; we rewrite \begin{equation*}
	\begin{split}
	I = \int_{ \{|x-y| <\frac{|x|}{2} \}}K(x-y)(\tilde{f}(y)-f(x) \tilde{\bf 1}^m(y) ) dy \, + f(x)  \int_{ \{|x-y| <\frac{|x|}{2} \}}K(x-y)  \tilde{\bf 1}^m(y) dy
	\end{split}
	\end{equation*} The point is that in the region $\{|x-y| <\frac{|x|}{2} \}$, \begin{equation*}
	\begin{split}
	|\tilde{f}(y)-f(x) \tilde{\bf 1}^m(y)|\le |x-y|^\alpha \nrm{f}_{C^\alpha_*}.
	\end{split}
	\end{equation*} Hence, the first term is bounded in absolute value by \begin{equation*}
	\begin{split}
	\left| \int_{ \{|x-y| <\frac{|x|}{2} \}}K(x-y)(\tilde{f}(y)-f(x) \tilde{\bf 1}^m(y) ) dy \right| \le  C\nrm{f}_{\mathring{C}^\alpha_*}. 
	\end{split}
	\end{equation*} To treat the second term, we note that up to a bounded term, we may consider (with change of variables $y_h=|x_h|z_h$) \begin{equation*}
	\begin{split}
	&\left|\int_{ \{ |x-y|_h <\frac{|x|_h+|x_3|}{2} \}}K_2(x_h-y_h)  \tilde{\bf 1}^m(y_h) dy_h\right| = \left| \int_{  \{ |\frac{x_h}{|x_h|} -z_h| < \frac{1}{2}(1 + \frac{|x_3|}{|x_h|}) \}   } K_2(\frac{x_h}{|x_h|}-z_h)   \tilde{\bf 1}^m(z_h) dz_h\right| \\
	&\qquad \le C\ln\left( 1+ \frac{|x_3|}{|x_h|} \right) 
	\end{split}
	\end{equation*} but then \begin{equation*}
	\begin{split}
	C|f(x)|\ln\left( 1+ \frac{|x_3|}{|x_h|} \right) = 	C|f(x)-f(0,0,x_3)|\ln\left( 1+ \frac{|x_3|}{|x_h|} \right) \le C|x_h|^\alpha|x|^{-\alpha} \ln\left( 1+ \frac{|x_3|}{|x_h|} \right) \nrm{f}_{\mathring{C}^\alpha_*} \le C\nrm{f}_{\mathring{C}^\alpha_*} 
	\end{split}
	\end{equation*} since $|x_h|\le C|x_3|$ in $\tilde{U}$. Collecting the bounds, \begin{equation*}
	\begin{split}
	|I|+|II|+|III| \le C( \nrm{f}_{\mathring{C}^\alpha_*} + \nrm{f}_{L^\infty} ).
	\end{split}
	\end{equation*}
	
	\medskip
	
	\noindent (ii) $C^\alpha_*$ bound 
	
	\medskip
	
	\noindent To show $C^\alpha_*$, it suffices to consider $\tilde{f}^m$ rather than $\tilde{f}$, appealing to the symmetry reduction lemma \ref{lem:symmetry-general}. We note that $\tilde{f}^m$ is scalar-valued, and supported near the half-line $\vec{\frka}_2$. Then, from this support property of $\tilde{f}^m$, we may identify $\tilde{\mathbf{1}}^m(y)$ with $\mathrm{sgn}(y_1y_2)$ since $\tilde{\mathbf{1}}^m(y)$ enters the proof only through the expression $\tilde{f}^m(y) -\tilde{\mathbf{1}}^m(y)f(x)$. From now on, for simplicity we shall even drop the superscript $m$. Moreover, it suffices to show $C^\alpha_*$-estimate in each coordinate. We first consider variations in $x_h$. Then, proving $C^\alpha_*$ in $x_h$ reduces to a 2D computation. 
	
	To see this, take $x \ne x' \in \tilde{U}$ with $x_3 = x'_3$ and rewrite \begin{equation*}
	\begin{split}
	&\int_{\bbR^3} K(x-y)\tilde{f}(y) dy - \int_{\bbR^3} K(x'-y)\tilde{f}(y) dy = \int_{\bbR^3} K(x-y)(\tilde{f}(y) - \tilde{\mathbf{1}}(y)f(x) )dy - \int_{\bbR^3} K(x'-y)(\tilde{f}(y)- \tilde{\mathbf{1}}(y)f(x')) dy \\
	& \qquad + f(x)\int_{\bbR^3} K(x-y) \tilde{\mathbf{1}}(y) dy - f(x')\int_{\bbR^3} K(x'-y) \tilde{\mathbf{1}}(y)dy. 
	\end{split}
	\end{equation*} Splitting the integration into $\{ |x-y|\le 10|x-x'| \}$ and its complement, we further rewrite \begin{equation*}
	\begin{split}
	&=  \int_{\{ |x-y|\le 10|x-x'| \}} K(x-y)(\tilde{f}(y) - \tilde{\mathbf{1}}(y)f(x) )dy - \int_{\{ |x-y|\le 10|x-x'| \}} K(x'-y)(\tilde{f}(y)- \tilde{\mathbf{1}}(y)f(x')) dy \\
	&\qquad +  \int_{\{ |x-y|> 10|x-x'| \}} \left( K(x-y)-K(x'-y) \right)( \tilde{f}(y)- \tilde{\mathbf{1}}(y)f(x) ) - \int_{\{ |x-y|\le 10|x-x'| \}} (f(x)-f(x')) K(x'-y)\tilde{\mathbf{1}}(y) dy \\
	&\qquad + f(x)\int_{\bbR^3} K(x-y) \tilde{\mathbf{1}}(y) dy - f(x')\int_{\bbR^3} K(x'-y) \tilde{\mathbf{1}}(y)dy. 
	\end{split}
	\end{equation*} The first three terms are straightforward to estimate by $C\nrm{f}_{C^\alpha_*(\tilde{U})}|x-x'|^\alpha$, simply using the pointwise estimates \begin{equation*}
	\begin{split}
	|\tilde{f}(y) - \tilde{\mathbf{1}}(y)f(x) | \le C\nrm{f}_{C^\alpha_*(\tilde{U})}|x-y|^\alpha, \quad |K(z)|\le C|z|^{-3}, \quad |\nabla K(z)| \le C|z|^{-4}. 
	\end{split}
	\end{equation*} Combining last three terms gives \begin{equation*}
	\begin{split}
	f(x)\int_{\bbR^3} \left[ K(x'-y) - K(x-y) \right] \tilde{\bf 1}(y) dy - (f(x')-f(x))\int_{\{|x-y|\le 10|x-x'|\}} K(x'-y) \tilde{\bf 1}(y) dy. 
	\end{split}
	\end{equation*}  Then, we can integrate the first expression in $y_3$, and rewrite $f(x) = f(x)-f(0,0,x_3)$: \begin{equation*}
	\begin{split}
	(f(x)-f(0,0,x_3))\int_{\bbR^2} \left[ K_2(x'_h-y_h) - K_2(x_h-y_h) \right] \mathrm{sgn}(y_1y_2) dy_h  
	\end{split}
	\end{equation*} The fact that this is bounded by $C\nrm{f}_{C^\alpha_*(\tilde{U})}|x_h-x'_h|^\alpha$ follows exactly the proof of Lemma \ref{lem:2D-2}, when $K$ is the kernel for either $R_{11}, R_{12}$, or $R_{22}$. Under the same assumptions for $K$, it is not difficult to show directly that (analogous to the ``half-moon'' computations) \begin{equation}\label{eq:integral}
	\begin{split}
	\int_{\{|x-y|\le 10|x-x'|\}} K(x'-y) \tilde{\mathbf{1}}(y) dy  
	\end{split}
	\end{equation} is uniformly bounded in $x,x'$. (Alternatively, one can replace $dy$ with $\chi(y_h)dy$, the integration domain $\{|x-y|\le 10|x-x'|\}$ to $\{|x-y|_h\le 10|x-x'|\}$ and reduce to a 2D computation as well.) It remains to treat the cases of $R_{13}, R_{23}$ (since $\mathrm{Id}=-R_{11}-R_{22}-R_{33}$) but in this case, the expressions \begin{equation*}
	\begin{split}
	\int_{\bbR^3} K(x-y) \tilde{\mathbf{1}}(y) dy,\quad \int_{\bbR^3} K(x'-y) \tilde{\mathbf{1}}(y) dy
	\end{split}
	\end{equation*} vanish in the first place since the kernels are odd in $y_3$ (after a shift by $x_3$) and $\tilde{\mathbf{1}}(y)$ is independent of $y_3$. It is easy to show that \eqref{eq:integral} is bounded in this case as well; we omit the details.
	
	We now consider variations in $x_3$; take two points $x,x'$ with $x_h=x'_h$ and $x_3 \ne x_3'$, and rewrite the difference as (using the $y_3$-invariance of $\tilde{\mathbf{1}}(y)$) \begin{equation*}
	\begin{split}
	\int_{\bbR^3} K(x-y)(\tilde{f}(y) - \tilde{\bf 1}(y)f(x))dy - \int_{\bbR^3} K(x-y) (\tilde{f}(y) - \tilde{\bf 1}(y)f(x)) dy. 
	\end{split}
	\end{equation*} Then, we proceed similarly as in the above: divide the integrals into regions $\{ |x-y| \le 10|x_3-x_3'| \}$ and its complement. Inspecting the terms, it is not difficult to see that it suffices to obtain the bound \begin{equation*}
	\begin{split}
	\left|(f(x)-f(x')) \int_{|x-y|\le 10 |x_3-x_3'| } K(x-y) \tilde{\bf 1}(y)dy \right| \le C \nrm{f}_{C^\alpha(\tilde{U})}|x-x'|^\alpha .
	\end{split}
	\end{equation*} The proof is similar to that of showing \eqref{eq:integral} is bounded. We omit the details. 
\end{proof}

\begin{proof}[Proof in the cases $k=1, 2$]
	We mainly emphasize the modifications from the proof above; note that now we do not have any vanishing condition. 
	
	\medskip
	
	\noindent (i) $L^\infty$ bound 
	
	\medskip
	
	\noindent We take $\tilde{f} := \tilde{f}^k$ ($k=1,2$), $\tilde{\mathbf{1}} := \tilde{\mathbf{1}}^k$ and follow the proof above; take $x\in\tilde{U}$ and decompose \begin{equation*}
	\begin{split}
	\int_{\bbR^3} K(x-y)\tilde{f}(y) dy = \left[ \int_{ \{|x-y|  <\frac{|x|}{2} \}} + \int_{ \{ |x-y|\ge \frac{|x|}{2}, |y|\le 10|x|    \} } + \int_{ \{|y|>10|x|\} } \right] K(x-y)\tilde{f}(y) dy = I + II + III.
	\end{split}
	\end{equation*} The expressions $|II|$ and $|III|$ can be bounded exactly the same way as before. On the other hand, in the local region, we note that \begin{equation*}
	\begin{split}
	\int_{ \{|x-y|  <\frac{|x|}{2} \} } K(x-y) \tilde{\mathbf{1}}(y) dy 
	\end{split}
	\end{equation*} is now uniformly bounded in $x \in \mathrm{supp}(f)$, where $K$ is the kernel for any $R_{ij}$ with $1\le i,j\le3$. This gives \begin{equation*}
	\begin{split}
	\left|f(x)\int_{ \{|x-y|  <\frac{|x|}{2} \} } K(x-y) \tilde{\mathbf{1}}(y) dy \right| \le C\nrm{f}_{L^\infty}.	\end{split}
	\end{equation*}
	Next, \begin{equation*}
	\begin{split}
	\left|\int_{ \{|x-y|  <\frac{|x|}{2} \} } K(x-y)(\tilde{f}(y)- f(x) \tilde{\mathbf{1}}(y) )dy  \right| \le C \nrm{f}_{\mathring{C}^\alpha(\tilde{U})},
	\end{split}
	\end{equation*} simply using that \begin{equation*}
	\begin{split}
	|\tilde{f}(y)- f(x) \tilde{\mathbf{1}}(y)| \le C|x|^{-\alpha} \nrm{f}_{\mathring{C}^\alpha(\tilde{U})}
	\end{split}
	\end{equation*} for $y \in \{|x-y|  <\frac{|x|}{2} \}$. The proof is complete. 
	
	\medskip
	
	\noindent (ii) $C^\alpha_*$ bound 
	
	\medskip
	
	\noindent As in the case of $k=3$ above, for the purpose of estimating $C^\alpha_*$ and $\mathring{C}^\alpha_*$, we appeal to Lemma \ref{lem:symmetry-general} and consider $\tilde{f}:=\tilde{f}^{k,m}$ and write $\tilde{\mathbf{1}}(y) := \tilde{\textbf{1}}^{k,m}(y)$ which can be identified with $\mathrm{sgn}(y_k)$. 
	
	Again, following the proof above, we start by rewriting \begin{equation*}
	\begin{split}
	&\int_{\bbR^3} K(x-y)\tilde{f}(y) dy - \int_{\bbR^3} K(x'-y)\tilde{f}(y) dy  \\
	& \quad  = \int_{\{ |x-y|\le 10|x-x'| \}} K(x-y)(\tilde{f}(y) - \tilde{\mathbf{1}}(y)f(x) )dy - \int_{\{ |x-y|\le 10|x-x'| \}} K(x'-y)(\tilde{f}(y)- \tilde{\mathbf{1}}(y)f(x')) dy \\
	&\qquad +  \int_{\{ |x-y|> 10|x-x'| \}} \left( K(x-y)-K(x'-y) \right)( \tilde{f}(y)- \tilde{\mathbf{1}}(y)f(x) ) dy \\
	&\qquad + f(x)\int_{\bbR^3} \left[ K(x'-y) - K(x-y) \right] \tilde{\bf 1}(y)dy - (f(x')-f(x))\int_{\{|x-y|\le 10|x-x'|\}} K(x'-y) \tilde{\bf 1}(y) dy. 
	\end{split}
	\end{equation*} As usual, it is straightforward to treat the first three terms, and the last term can be handled with a uniform bound on the integral \begin{equation*}
	\begin{split}
	\left|\int_{\{|x-y|\le 10|x-x'|\}} K(x'-y) \tilde{\bf 1}(y) dy\right| \le C. 
	\end{split}
	\end{equation*} Lastly, \begin{equation*}
	\begin{split}
	\int_{\bbR^3} \left[ K(x'-y) - K(x-y) \right] \tilde{\bf 1}(y)dy =0 
	\end{split}
	\end{equation*} simply because \begin{equation*}
	\begin{split}
	\int_{\bbR^3} K(x'-y) \tilde{\bf 1}(y)dy =\int_{\bbR^3}  K(x-y)   \tilde{\bf 1}(y)dy 
	\end{split}
	\end{equation*} recalling that $\tilde{\mathbf{1}}(y) = \mathrm{sgn}(y_k)$. Indeed, when $K$ is the kernel for $R_{13}$ and $R_{23}$, this is obvious from the $x_3$-invariance, and when $K$ is the kernel for $R_{11}, R_{12},$ and $R_{22}$, we can reduce to the corresponding equality from the 2D case (see Example \ref{example:Riesz} and observe that one can write $\tilde{\textbf{1}}$ as a linear combination of $\tilde{\textbf{1}}_{\Omega_4}$ defined there and its 2D rotations). We omit the proof of the $\mathring{C}^\alpha_*$-estimate, which is a straightforward adaptation of this argument. \end{proof}

\subsubsection{Estimates near $\vec{\frka}_3$ and $\vec{\frka}_4$}\label{subsubsec:est3} 

In this section, we consider the remaining cases of $f$ supported near $\vec{\frka}_3$ and $\vec{\frka}_4$ (and away from other half-lines). Most part of the arguments are parallel to the case of $\vec{\frka}_2$, and somewhat simpler. In the $\vec{\frka}_4$ case, we redefine the coordinate system by orthogonal basis \begin{equation*}
\begin{split}
\{ e_1 = (0,1,0), e_2 = (0,0,1), e_3 = (1,0,0) \}. 
\end{split}
\end{equation*} Note that as in the case of $\vec{\frka}_2$, the  radial direction is defined to be the new $x_3$-axis. Moreover, $\vec{\frka}_4$ is adjacent to 8 fundamental domains for $\tilde{\mathcal{O}}$ (including $\tilde{U}$ itself), which gives rise to the adjacent extension $\tilde{f}^m$. Again, for the convenience of the reader, we explicitly write them out in components: using the notation $f = (f^1,f^2,f^3)$ (in the new coordinates system), we first have \begin{equation*}
\begin{split}
\tilde{f}^{3,m}(x) &= g^3(x) + g^3(x_h^\perp,x_3) + g^3(-x_h,x_3) + g^3(-x_h^\perp,x_3),
\end{split}
\end{equation*} \begin{equation}\label{eq:g3}
\begin{split}
g^3(x) = f^3(x) \mathbf{1}_{ \{ x_1>x_2>0 \} } - f^3(x_2,x_1,x_3)\mathbf{1}_{ \{ x_2>x_1>0\} } .
\end{split}
\end{equation} Next, \begin{equation*}
\begin{split}
\tilde{f}^{1,m}(x) = (g^1(x) - g^1(-x_h,x_3))e_1 + (g^1(x_h^\perp,x_3) - g^1(-x_h^\perp,x_3))e_2,
\end{split}
\end{equation*}\begin{equation}\label{eq:g1}
\begin{split}
g^1(x) = f^1(x) \mathbf{1}_{ \{ x_1>x_2>0 \} } - f^1(x_1,-x_2,x_3) \mathbf{1}_{ \{ x_1>-x_2>0 \} } 
\end{split}
\end{equation} and \begin{equation*}
\begin{split}
\tilde{f}^{2,m}(x) = (g^2(x) - g^2(-x_h,x_3))e_2 + (g^2(x_h^\perp,x_3) - g^2(-x_h^\perp,x_3))e_1,
\end{split}
\end{equation*} \begin{equation}\label{eq:g2}
\begin{split}
g^2(x) = f^2(x) \mathbf{1}_{ \{ x_1>x_2>0 \} } + f^2(x_1,-x_2,x_3) \mathbf{1}_{ \{ x_1>-x_2>0 \} }. 
\end{split}
\end{equation} Important observation is that, freezing the $x_3$-coordinate, $\tilde{f}^{3,m}$ is a scalar-valued function which is 4-fold symmetric in $x_h$, and $\tilde{f}^{k,m}$ is odd in $x_h$ for $k = 1,2$. This allows one to essentially reduce the H\"older estimates to 2D computations, Lemma \ref{lem:2D-1} and Lemma \ref{lem:2D-3}, respectively. 

We mention briefly the case of $\vec{\frka}_3$. In this case the coordinate system is defined by \begin{equation*}
\begin{split}
\{  e_1 = (-\frac{1}{\sqrt{6}}, -\frac{1}{\sqrt{6}}, \frac{2}{\sqrt{6}})  , e_2 = (-\frac{1}{\sqrt{2}},\frac{1}{\sqrt{2}},0),    e_3 = (\frac{1}{\sqrt{3}},\frac{1}{\sqrt{3}},\frac{1}{\sqrt{3}}) \}.
\end{split}
\end{equation*} We suppress from writing out the formulas for $\tilde{f}^{k,m}$ near $\vec{\frka}_3$. However, the only essential feature that will be used in the proof is that, upon fixing $x_3$, $\tilde{f}^{3,m}$ is 3-fold rotationally symmetric in $x_h$, and $\tilde{f}^{k,m}$ is odd in $x_h$ for $k=1,2$.

We now state the main result of this section. 

\begin{lemma}\label{lem:est-43}
	Assume that $f\in C^\alpha\cap\mathring{C}^\alpha(\tilde{U})$ is supported near $\vec{\frka}_4$. Then, for any $1\le i,j,k \le 3$, we have that \begin{equation*}
	\begin{split}
	\nrm{R_{ij}\tilde{f}^k}_{ C^\alpha\cap\mathring{C}^\alpha(\tilde{U}) } \le \sum_{1\le\ell\le 3} C\nrm{f^{\ell}}_{C^\alpha\cap\mathring{C}^\alpha(\tilde{U})}. 
	\end{split}
	\end{equation*} The same estimate holds for $f$ supported near $\vec{\frka}_3$. 
\end{lemma}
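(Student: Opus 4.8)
The plan is to follow the scheme of the proof of Lemma~\ref{lem:est-2} (the $\vec{\frka}_2$ case), which becomes notably simpler here because the dihedral angles of $\tilde U$ along $\vec{\frka}_3$ and $\vec{\frka}_4$ are $\pi/3$ and $\pi/4$, both of the form $\pi/m$ with $m\ge 3$, so the Bahouri--Chemin/vanishing-at-the-origin mechanism plays no role. First I would put in place the same reductions: by the rotation invariance of double Riesz transforms, pass to the rotated coordinates introduced above in which $\vec{\frka}_d$ is the positive $x_3$-axis and $\tilde U$ is, locally near $\vec{\frka}_d$, the cone $\Omega_m^1\times\bbR$ with $m=4$ (for $\vec{\frka}_4$) or $m=3$ (for $\vec{\frka}_3$); and by the symmetry reduction lemma (Lemma~\ref{lem:symmetry-general}), using $f(\mathbf 0)=0$ and the convex cone $\Omega=\tilde U$, discard the remainder extension $\tilde f^{k,r}$ and replace $\tilde f^k$ by the adjacent extension $\tilde f^{k,m}$ when estimating the $C^\alpha_*$ and $\mathring C^\alpha_*$ seminorms. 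For the $L^\infty$ bound one keeps the full extension $\tilde f^k$ and uses the vanishing of $\int_{\rd B_0(R)} p(y)\tilde f^k(y)\,d\sigma$ for every second-order homogeneous polynomial $p$, established in Section~\ref{sec:expansion}.

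The $L^\infty$ estimate is then obtained exactly as in the $\vec{\frka}_2$, $k=1,2$ case: split $\bbR^3$ into $\{|x-y|<\tfrac12|x|\}$, $\{|x-y|\ge\tfrac12|x|,\ |y|\le 10|x|\}$, and $\{|y|>10|x|\}$; the far region contributes $\lesssim\|f^k\|_{L^\infty}$ after subtracting $K(y)$ from $K(x-y)$, the middle region contributes $\lesssim\|f^k\|_{L^\infty}$ via $|K(z)|\lesssim|z|^{-3}$, and on the near region one writes $\tilde f^k=(\tilde f^k-f(x)\tilde{\mathbf 1}^m)+f(x)\tilde{\mathbf 1}^m$, bounds the first piece by $\|f^k\|_{\mathring C^\alpha_*}$, and for the second piece uses that $R_{ij}\tilde{\mathbf 1}^m$ is \emph{bounded} near $\vec{\frka}_d$. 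This last fact is where $m\ge 3$ and the symmetry enter: on the support of $f$ one may replace $\tilde{\mathbf 1}^m$ by the $x_3$-independent sign pattern $\tilde{\mathbf 1}_{R_m}(x_h)$ (when $k=3$, since $\tilde f^{3,m}$ is, in $x_h$, the fully odd, i.e. Dirichlet, extension of $f^3|_{\Omega_m^1}$) or by $\tilde{\mathbf 1}_{\Omega_m}(x_h)$ (when $k=1,2$, since $\tilde f^{k,m}$ is odd in $x_h$), so that integrating the three-dimensional kernels $K_{11},K_{12},K_{22}$ in $y_3$ reduces the claim to the boundedness of the corresponding two-dimensional double Riesz transforms, which is precisely Example~\ref{example:Riesz} and its $m=3$ analogue; the kernels $K_{13},K_{23}$ annihilate the $y_3$-even $\tilde{\mathbf 1}^m$ by oddness, and $R_{33}=-\mathrm{Id}-R_{11}-R_{22}$. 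No logarithm arises, in contrast to the $\vec{\frka}_2$ computation.

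For the $C^\alpha_*$ and $\mathring C^\alpha_*$ seminorms I would again copy the $\vec{\frka}_2$ argument, estimating variations in $x_h$ (with $x_3$ frozen) and in $x_3$ (with $x_h$ frozen) separately and interpolating. Using $\tilde f^{k,m}=(\tilde f^{k,m}-f(x)\tilde{\mathbf 1}^m)+f(x)\tilde{\mathbf 1}^m$, splitting into $\{|x-y|\le 10|x-x'|\}$ and its complement, and the pointwise bounds $|\tilde f^{k,m}(y)-f(x)\tilde{\mathbf 1}^m(y)|\lesssim|x-y|^\alpha\|f^k\|_{C^\alpha_*}$, $|K(z)|\lesssim|z|^{-3}$, $|\nabla K(z)|\lesssim|z|^{-4}$, all terms but the $\tilde{\mathbf 1}^m$ ones are $\lesssim\|f^k\|_{C^\alpha_*}|x-x'|^\alpha$; the $\tilde{\mathbf 1}^m$ terms reduce to $f(x)\bigl(R_{ij}\tilde{\mathbf 1}^m(x)-R_{ij}\tilde{\mathbf 1}^m(x')\bigr)$, which vanishes because $R_{ij}\tilde{\mathbf 1}^m$ is constant on $\Omega_m^1$, plus a term $\lesssim|f(x)-f(x')|\bigl|\int_{\{|x-y|\le10|x-x'|\}}K(x'-y)\tilde{\mathbf 1}^m(y)\,dy\bigr|$ controlled by the uniform ``half-moon'' bound. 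Equivalently, one may package the genuinely two-dimensional part as a direct application of Lemma~\ref{lem:2D-1} (for $\tilde f^{3,m}$, $m$-fold rotationally symmetric and odd across the bisectors, hence a Dirichlet datum on $\Omega_m^1$) and of Lemma~\ref{lem:2D-3} (for $\tilde f^{k,m}$, $k=1,2$, odd in $\bbR^2$ and $C^\alpha\cap\mathring C^\alpha$ on each $\Omega_m^i$) after one $y_3$-integration, the transforms $R_{i3}$ reducing to the same lemmas via the hemisphere cancellation $\int_{\{z_3\ge0\}\cap\bbS^2}K_{i3}\,d\sigma=0$. Summing over the partition of unity and over $k$, and rerunning with $m=3$, finishes both cases.

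The main obstacle is not analytic but organizational: one must correctly identify the symmetry type of each adjacent extension $\tilde f^{k,m}$ in the rotated frame --- reading off from the explicit formulas \eqref{eq:g3}--\eqref{eq:g2} (and their unstated $\vec{\frka}_3$ counterparts) that, with $x_3$ frozen, $\tilde f^{3,m}$ is the alternating/Dirichlet extension of $f^3|_{\Omega_m^1}$ while $\tilde f^{1,m}$ and $\tilde f^{2,m}$ are odd in $x_h$ --- and then check that $y_3$-integration does send $K_{11},K_{12},K_{22}$ to their two-dimensional counterparts and the $x_3$-odd kernels to zero against $\tilde{\mathbf 1}^m$, so that Example~\ref{example:Riesz} and Lemmas~\ref{lem:2D-1} and~\ref{lem:2D-3} may legitimately be invoked. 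Once these identifications are in place, every estimate is a verbatim and strictly easier rerun of the $\vec{\frka}_2$ computations.
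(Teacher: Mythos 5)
Your proposal is correct and follows essentially the same route as the paper: the same rotation to put $\vec{\frka}_d$ on the $x_3$-axis, the same appeal to the symmetry reduction Lemma \ref{lem:symmetry-general} to pass to the adjacent extensions, the same three-region splitting with the subtraction $\tilde f^{k}-f(x)\tilde{\mathbf 1}$ for the $L^\infty$ bound, and the same reduction of the $x_h$-variations to Lemma \ref{lem:2D-1} (for the 4-fold/3-fold symmetric $\tilde f^{3,m}$) and Lemma \ref{lem:2D-3} (for the odd $\tilde f^{1,m},\tilde f^{2,m}$), with $R_{i3}$ handled by $y_3$-oddness. The only deviation is cosmetic: the paper splits $\tilde f^{k,m}$ ($k=1,2$) into the two half-plane pieces $\tilde g^k$ and $\tilde f^{k,m}-\tilde g^k$ and pairs each with its own comparison pattern ($\tilde{\mathbf 1}_{R_4}$ resp.\ $\mathbf 1$), whereas you use the single odd pattern $\tilde{\mathbf 1}_{\Omega_m}$; both yield the required pointwise bound on $\tilde f^{k,m}(y)-f^k(x)\tilde{\mathbf 1}(y)$.
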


We shall only consider the case of $\vec{\frka}_4$, the $\vec{\frka}_3$ case being strictly analogous. 

\begin{proof}
	\medskip
	
	\noindent (i) $L^\infty$ bound 

	\medskip
	
	\noindent 
	We first consider $\tilde{f}^3$ and follow the steps of the proof of Lemma \ref{lem:est-2} with some $x\in\tilde{U}$. To treat the region $\{ |x-y|<\frac{|x|}{2} \}$, we need to define $\tilde{\textbf{1}}$ appropriately. We simply take \begin{equation*}
	\begin{split}
	\tilde{\textbf{1}}^3(y) := \tilde{\mathbf{1}}_{R_4}(y_h)
	\end{split}
	\end{equation*} where $\tilde{\mathbf{1}}_{R_4}$ is defined in Figure \ref{fig:functions}. Again, the point is that we have \begin{equation*}
	\begin{split}
	|\tilde{f}^3(y) - \tilde{\textbf{1}}^3(y)f^3(x)| \le C\nrm{f^3}_{\mathring{C}^\alpha_*(\tilde{U})}|x|^{-\alpha}|x-y|^\alpha
	\end{split}
	\end{equation*} whenever $y \in \{ |x-y|<\frac{|x|}{2} \}$. This establishes the bound \begin{equation*}
	\begin{split}
	\nrm{R_{ij}\tilde{f}^3}_{L^\infty(\tilde{U})} \le C \nrm{f^3}_{\mathring{C}^\alpha(\tilde{U})}. 
	\end{split}
	\end{equation*} To treat the cases $k = 1,2$, we again note that it only remains to treat the integral \begin{equation*}
	\begin{split}
	\int_{\{ |x-y|<\frac{|x|}{2} \}} K(x-y)\tilde{f}^k(y)dy. 
	\end{split}
	\end{equation*} In this region, we have $\tilde{f}^k(y)=\tilde{f}^{k,m}(y)$ (replacing $\frac{|x|}{2}$ by $\frac{|x|}{10}$ if necessary) and we may treat separately $\tilde{g}^k:=g^k(x)-g^k(-x_h,x_3)$ and $\tilde{f}^{k,m} - \tilde{g}^k$, where $g^k$ is defined in \eqref{eq:g1},\eqref{eq:g2}. We just show how to treat $\tilde{g}^k$: in this case, we have that \begin{equation*}
	\begin{split}
	&\int_{\{ |x-y|<\frac{|x|}{2} \}} K(x-y)\tilde{g}^k(y)dy = \int_{\{ |x-y|<\frac{|x|}{2} \} \cap \{ 0<|y_2|<y_1   \}    } K(x-y)\tilde{g}^k(y)dy \\
	&\quad = \int_{\{ |x-y|<\frac{|x|}{2} \} \cap \{ 0<|y_2|<y_1   \}    } K(x-y)(\tilde{g}^k(y) - f^k(x)\tilde{\mathbf{1}}^k(y) )dy  + \int_{\{ |x-y|<\frac{|x|}{2} \} \cap \{ 0<|y_2|<y_1   \}    } K(x-y)\tilde{\mathbf{1}}^k(y)dy 
	\end{split}
	\end{equation*} Here, $\tilde{\textbf{1}}^1(y) := \tilde{\mathbf{1}}_{R_4}(y_h)$ and $\tilde{\textbf{1}}^2(y)= \mathbf{1}_{\mathbb{R}^3}(y)$. Similarly as in the above, these definitions guarantee that \begin{equation*}
	\begin{split}
		|\tilde{f}^k(y) - \tilde{\textbf{1}}^k(y)f^k(x)| \le C\nrm{f^k}_{\mathring{C}^\alpha_*(\tilde{U})}|x|^{-\alpha}|x-y|^\alpha
	\end{split}
	\end{equation*} as long as $y\in \{ |x-y|<\frac{|x|}{2} \} \cap \{ 0<|y_2|<y_1   \} $. This establishes the bound \begin{equation*}
	\begin{split}
	\sum_{k=1,2}\nrm{R_{ij}\tilde{f}^k}_{L^\infty(\tilde{U})} \le \sum_{k=1,2} C \nrm{f^k}_{\mathring{C}^\alpha(\tilde{U})}. 
	\end{split}
	\end{equation*} 
	
	\medskip
	
	\noindent (ii) $C^\alpha_*$ bound 
	
	\medskip
	
	\noindent To obtain the $C^\alpha_*$ bound, it suffices to consider $\tilde{f}^{k,m}$ rather than $\tilde{f}^k$, with an application of the symmetry reduction Lemma \ref{lem:symmetry-general}. We shall even omit the superscript $m$. The proof of $C^\alpha_*$ estimate is again parallel to the case of $\vec{\frka}_2$ treated in the previous section; variations in the $x_3$ direction is handled using $x_3$-invariance of $\tilde{\mathbf{1}}^k$ (defined in (i) above), and variations in $x_h$ can be reduced to obtaining 2D $C^\alpha_*$ estimates which correspond exactly to Lemma \ref{lem:2D-1} ($k=3$) and Lemma \ref{lem:2D-3} ($k=1,2$). 

The proof of $\mathring{C}^\alpha_*$-bound is parallel to that of $C^\alpha_*$ bound. We again omit the details. 
\end{proof}

\section{Local well-posedness}\label{sec:lwp}

We complete the proof of Theorem \ref{thm:lwp-corner} using the H\"older estimates we have established in the previous sections. 

\subsection{A priori estimates}\label{subsec:apriori}

Let $\omega \in \mathring{C}^\alpha\cap C^\alpha (\tilde{U})$ be a divergence-free vector satisfying $\omega^1+\omega^2 = 0$ along $\{ (z,z,0):z>0 \}$. We have that the same holds for the difference \begin{equation*}
\begin{split}
\omega - \omega(\mathbf{0}), 
\end{split}
\end{equation*} and since this function is vanishing at the origin, the estimates from the previous section gives that the corresponding velocity gradient belongs to $\mathring{C}^\alpha\cap C^\alpha (\tilde{U})$. Moreover, $\omega(\mathbf{0})$ is of the form \eqref{eq:explicit} for some choice of $\lambda$ and $\mu$, and the explicit computations in Section \ref{subsec:cutoff} show that the corresponding velocity gradient is again some constant function in $\tilde{U}$, depending on $\lambda$ and $\mu$. Therefore, we conclude that \begin{equation*}
\begin{split}
\nrm{\nabla u}_{ \mathring{C}^\alpha\cap C^\alpha (\tilde{U}) } \lesssim \nrm{\omg}_{\mathring{C}^\alpha\cap C^\alpha (\tilde{U})}
\end{split}
\end{equation*}  where $u$ is the velocity corresponding to $\omg$. Now assume that there is a solution $\omg$ in some time interval to \begin{equation*}
\begin{split}
\rd_t\omg + u\cdot\nb\omg = \nabla u \omg, 
\end{split}
\end{equation*} where $\omega^1+\omega^2 = 0$ along $\{ (z,z,0):z>0 \}$ for all $t$. Using the above bound for $\nabla u$, it is straightforward to derive the estimate  \begin{equation*}
\begin{split}
\frac{d}{dt}\nrm{\omg(t)}_{\mathring{C}^\alpha\cap C^\alpha (\tilde{U})} \lesssim \nrm{\omg(t)}_{\mathring{C}^\alpha\cap C^\alpha (\tilde{U})}^2.
\end{split}
\end{equation*} For instance, one can follow the proof of a priori estimates given in \cite{EJB,EJEuler}. In particular, there exists some $T_0>0$ depending only on $\nrm{\omg_0}_{\mathring{C}^\alpha\cap C^\alpha (\tilde{U})}$ such that \begin{equation*}
\begin{split}
\nrm{\omg(t)}_{\mathring{C}^\alpha\cap C^\alpha (\tilde{U})}\le 2\nrm{\omg_0}_{\mathring{C}^\alpha\cap C^\alpha (\tilde{U})},\quad t \in [0,T_0].
\end{split}
\end{equation*}

\subsection{Existence and uniqueness}\label{subsec:eandu} 

Under the assumptions of Theorem \ref{thm:lwp-corner}, we need to prove existence and uniqueness of a solution $\omega \in C([0,T); C^\alpha\cap\mathring{C}^\alpha(\tilde{U}))$ for some $T>0$. We divide the proof into uniqueness and existence. 

\begin{proof}[Proof of uniqueness]
	Let $\omega_0$ satisfy the  assumptions of Theorem \ref{thm:lwp-corner}, and let us assume that there exist some $T>0$ and two solutions  $\omega, \tilde{\omega} \in C([0,T); C^\alpha\cap\mathring{C}^\alpha(\tilde{U}))$ both satisfying $\omega(t=0) = \tilde{\omega}(t=0) = \omega_0$. Moreover, $\omega^1(t) + \omega^2(t)$ and $\tilde\omega^1(t) + \tilde\omega^2(t)$  vanish on $\{ (x_1,x_2,x_3): x_1 = x_2 \ge 0, x_3 = 0 \}$ for any $0\le t <T$. Denoting the corresponding velocities by $u,\tilde{u}$, we have from the a priori estimate that \begin{equation*}
	\begin{split}
	\nrm{\nabla u(t)}_{C^\alpha\cap\mathring{C}^\alpha(\tilde{U})} \le \nrm{\omega(t)}_{C^\alpha\cap\mathring{C}^\alpha(\tilde{U})},\quad \nrm{\nabla \tilde{u}(t)}_{C^\alpha\cap\mathring{C}^\alpha(\tilde{U})} \le \nrm{\tilde{\omega}(t)}_{C^\alpha\cap\mathring{C}^\alpha(\tilde{U})} . 
	\end{split}
	\end{equation*} In particular, we have for $0<t<T_0$ (where $T_0$ is given in \ref{subsec:apriori}) \begin{equation*}
	\begin{split}
	\nrm{\nb u(t)}_{L^\infty(\bbR^3)} +\nrm{\nb \tilde{u}(t)}_{L^\infty(\bbR^3)}\le 2\nrm{\omg_0}_{C^\alpha\cap \mathring{C}^\alpha(\tilde{U})}.
	\end{split}
	\end{equation*} From now on, we shall view the solutions as defined on $\bbR^3$. 
	Let us repeat the argument appeared in our previous work for the Boussinesq system \cite[Theorem 1]{EJB}. Denoting the difference by $v := u - \tilde{u}$ on $[0,T_0]$ and returning to the velocity formulation of 3D Euler, we write \begin{equation}\label{eq:v}
	\begin{split}
	\rd_t v + u\cdot\nb v + v\cdot\nb \tilde{u} + \nb \pi = 0,
	\end{split}
	\end{equation} where $\pi := p-\tilde{p}$. Here, $p$ and $\tilde{p}$ are the pressure corresponding to $u$ and $\tilde{u}$, respectively. We shall use the following key estimate from \cite{EJB}: \begin{equation}\label{eq:pressure_bound}
	\begin{split}
	\nrm{|x|^{-1}\nb\pi(x)}_{L^\infty} &\le C \left[ \left( \nrm{\nb u}_{L^\infty}+\nrm{\nb\tilde{u}}_{L^\infty} \right)  \nrm{ |x|^{-1}v(x)}_{L^\infty} \left( 1 + \ln\left( \frac{ \nrm{\nb\tilde{v}}_{L^\infty} }{ \nrm{ |x|^{-1}v(x)}_{L^\infty }} \right) \right)  \right]
	\end{split}
	\end{equation} Assuming \eqref{eq:pressure_bound}, we can finish the proof of uniqueness as follows. Dividing both sides of \eqref{eq:v} by $|x|$ and composing with the flow generated by $u$, we have \begin{equation*}
	\begin{split}
	\frac{d}{dt} \nrm{ |x|^{-1}v(x)}_{L^\infty} &\le C\left( \nrm{\nb u}_{L^\infty}  + \nrm{\nb \tilde{u}}_{L^\infty} \right)\nrm{ |x|^{-1}v(x)}_{L^\infty} + \nrm{ |x|^{-1}\nb p(x)}_{L^\infty} \\
	&\le C \nrm{ |x|^{-1}v(x)}_{L^\infty}(1 + \ln( \frac{C}{\nrm{ |x|^{-1}v(x)}_{L^\infty}} )),
	\end{split}
	\end{equation*} with $C>0$ now depending on $\sup_{t\in[0,T_0]}(\nrm{\nb u}_{L^\infty}  + \nrm{\nb \tilde{u}}_{L^\infty})$, which is bounded in terms of the initial data. The previous inequality is sufficient to guarantee that $\nrm{|x|^{-1}v(x)}_{L^\infty}=0$ on $[0,T_0]$ as $\nrm{|x|^{-1}v_0(x)}_{L^\infty}=0$. Repeating the same argument starting at $t = T_0$, one can show that $u=\tilde{u}$ all the way up to time $T>0$. 
	
	Let us now comment on the proof of \eqref{eq:pressure_bound}. We have \begin{equation*}
	\begin{split}
	-\lap\pi = \sum_{i,j} ( \rd_i u_j \rd_ju_i - \rd_i\tilde{u}_j\rd_j\tilde{u}_i ) = \sum_{i,j} ( \rd_i v_j \rd_ju_i - \rd_i\tilde{u}_j\rd_j v_i ) 
	\end{split}
	\end{equation*} Since $u$ and $\tilde{u}$ are divergence-free, we can also rewrite the above as \begin{equation*}
	\begin{split}
	\nb \pi  = \nb (-\lap)^{-1} \sum_i( \rd_i  \sum_j( v_j\rd_ju_i - \tilde{u}_j\rd_jv_i)),
	\end{split}
	\end{equation*} and we observe that the vector $W = (W_i)_{1\le i\le 3}$ defined by \begin{equation*}
	\begin{split}
	W_i = \sum_j( v_j\rd_ju_i - \tilde{u}_j\rd_jv_i)
	\end{split}
	\end{equation*} is symmetric (as a vector field) with respect to $\mathcal{O}$. Then we have \begin{equation*}
	\begin{split}
	\frac{\nb\pi}{|x|} = \frac{1}{|x|} \nb(-\lap)^{-1}\nb \cdot W 
	\end{split}
	\end{equation*} and since the singular integral operator $\nb(-\lap)^{-1}\nb \cdot$ has a kernel of the form $p(x)|x|^{-5}$ where $p$ is a (vector valued) homogeneous polynomial of order 2, there is a gain of decay when integrated against $W$. From this observation, it is straightforward to obtain the estimate \eqref{eq:pressure_bound}, following the argument of \cite{EJB}. 
\end{proof}

\begin{proof}[Proof of existence]
	Let $\omega_0$ satisfy the  assumptions of Theorem \ref{thm:lwp-corner}, with $(\omega_0^1+\omega_0^2)(z,z,0)=0$. The a priori estimate shows that the corresponding velocity gradient $\nabla u_0$ belongs to $C^\alpha\cap\mathring{C}^\alpha(\tilde{U})$. We define $\omega^{(0)}(t), u^{(0)}(t)$ to be $\omega_0$ and $u_0$ for $0\le t<T$ where $T>0$ is to be determined below. Given $(\omega^{(n)},u^{(n)})$ satisfying $(\omega^{(n),1}+\omega^{(n),2})(z,z,0)=0$, we define inductively $\omega^{(n+1)}$ as follows: \begin{equation*}
	\begin{split}
	&\rd_t \omega^{(n+1),1} + u^{(n)}\cdot\nb \omega^{(n+1),1}  = \rd_1u^{(n),1} \omega^{(n+1),1} + \rd_2u^{(n),1} \omega^{(n+1),2} + \rd_3 u^{(n+1),1}\omega^{(n),3} \\
	&\rd_t \omega^{(n+1),2} + u^{(n)}\cdot\nb \omega^{(n+1),2}  = \rd_1u^{(n),2} \omega^{(n+1),1} + \rd_2u^{(n),2} \omega^{(n+1),2} + \rd_3 u^{(n+1),2}\omega^{(n),3}\\
	&\rd_t \omega^{(n+1),3} + u^{(n)}\cdot\nb \omega^{(n+1),3}  = \rd_1u^{(n),3} \omega^{(n+1),1} + \rd_2u^{(n),3} \omega^{(n+1),2} + \rd_3 u^{(n+1),3}\omega^{(n),3},
	\end{split}
	\end{equation*}  with initial data $\omega_0^{(n+1)} = \omega_0$. We verify that, on $(z,z,0)$, we have: \begin{equation*}
	\begin{split}
	&[\rd_t + u^{(n)}\cdot\nb] (  \omega^{(n+1),1}  + \omega^{(n+1),2}  ) = ( \rd_1 u^{(n),1} + \rd_1 u^{(n),2} )(  \omega^{(n+1),1}  + \omega^{(n+1),2}  )\\
	&\qquad  + (-\omega^{(n),3} +  \rd_2 u^{(n),2} - \rd_1 u^{(n),1} )\omega^{(n+1),2} + \omega^{(n),3}(\omega^{(n+1),2} - \omega^{(n+1),1} + \rd_1u^{(n),3} + \rd_2u^{(n),3}) \\
	&\qquad \quad = ( \rd_1 u^{(n),1} + \rd_1 u^{(n),2} )(  \omega^{(n+1),1}  + \omega^{(n+1),2}  ) - \omega^{(n),3}(  \omega^{(n+1),1}  + \omega^{(n+1),2}  ), 
	\end{split}
	\end{equation*} which implies that along the line $\{(z,z,0)\}$, $(  \omega^{(n+1),1}  + \omega^{(n+1),2}  ) = 0$ if it holds at $t = 0$. At the last step we have used the identities \begin{equation*}
	\begin{split}
	\rd_1u^{(k),3} =\rd_2u^{(k),3}=0, 
	\end{split}
	\end{equation*} \begin{equation*}
	\begin{split}
	\rd_1u^{(k),1}+\rd_2u^{(k),1} = \rd_1u^{(k),2}+\rd_2u^{(k),2} 
	\end{split}
	\end{equation*} for $k=n,n+1$, which are consequences of the slip boundary conditions\begin{equation*}
	\begin{split}
	u^{(k),3}(x_1,x_2,0)=0, \quad u^{(k),1}(z,z,x_3)= u^{(k),2}(z,z,x_3). 
	\end{split}
	\end{equation*} 
	Using the a priori estimates, one can show that for the same $T_0>0$ from \ref{subsec:apriori}, we have \begin{equation*}
	\begin{split}
	\sup_{t \in [0,T_0]}\left(\nrm{\omega^{(n)}(t)}_{C^\alpha\cap\mathring{C}^\alpha} + \nrm{\nabla u^{(n)}(t)}_{C^\alpha\cap\mathring{C}^\alpha} \right)\lesssim \nrm{\omg_0}_{C^\alpha\cap\mathring{C}^\alpha}
	\end{split}
	\end{equation*} uniformly in $n$. Passing to a sub-sequential limit, one obtains a pair $(\omega(t),\nb u(t))$ bounded in $L^\infty([0,T_0];C^\alpha\cap\mathring{C}^\alpha)$. We have that $\omega^{(n)} \rightarrow \omega$ and $\nb u^{(n)} \rightarrow \nb u$ in $L^\infty_t C^0_x$. From this it is easy to see that the pair $(\omega,\nb u)$ is a solution to the Euler equations with initial data $\omega_0$. 
\end{proof}

\subsection{Propagation of higher regularity}\label{sec:higher}

Given the local well-posedness in $C^\alpha$ of the vorticity, it is not difficult to propagate higher H\"older regularity inside the domain $\tilde{U}$. Of course, it is \textit{necessary} to impose suitable vanishing conditions on the derivatives for the initial vorticity. In this section, we sketch the propagation of $C^{1,\alpha}$ regularity for the vorticity for any $0<\alpha<1$. Formally we state it as follows: \begin{proposition}\label{prop:higher} 
	In addition to the assumptions of Theorem \ref{thm:lwp-corner}, suppose that $\nb\omg_0 \in C^{\alpha}\cap\mathring{C}^\alpha(\tilde{U})$ and \begin{equation*}
	\begin{split}
	\nabla(\omega^1_0+\omega^2_0) = 0
	\end{split}
	\end{equation*} on $\vec{\frka}_2$ and either \begin{itemize}
		\item $\omega^1_0-\omega^2_0 = 0$ and $(\rd_1+\rd_2)(\omega^1_0-\omega^2_0 ) = 0$ or
		\item $\omega^3_0 = 0$ and $(\rd_1+\rd_2)\omega^3_0 = 0$
	\end{itemize} holds on $\vec{\frka}_2$. Then, the unique $C^\alpha$ solution defined on $[0,T^*)$ with initial data $\omg_0$ remains in $C^{1,\alpha}$ for all $t<T^*$.  
\end{proposition}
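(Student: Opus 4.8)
The plan is to run a standard ``differentiate the equation and close a Gr\"onwall estimate'' argument, with the only subtlety being that one must track the vanishing conditions along $\vec{\frka}_2$ that are needed to keep the velocity second-derivative bounds (Proposition~\ref{prop:3D-estimate} applied to $\nb\omg$ in place of $\omg$) valid. First I would set $\Omega = \nb\omg$ (the full collection of first partials of the three vorticity components) and differentiate the vorticity equation $\rd_t\omg + u\cdot\nb\omg = \nb u\,\omg$ to obtain a transport-type equation
\begin{equation*}
\rd_t\Omega + u\cdot\nb\Omega = (\nb u)\ast\Omega + (\nb^2 u)\ast\omg,
\end{equation*}
schematically, where $(\nb u)\ast\Omega$ denotes a bounded-coefficient linear expression in $\Omega$ and $(\nb^2 u)\ast\omg$ a linear expression in the second derivatives of $u$ with coefficients given by the (already $C^\alpha\cap\mathring C^\alpha$-bounded) vorticity $\omg$. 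The key input is that $\nb^2 u$ is a linear combination of double Riesz transforms $R_{ij}$ applied to $\nb\omg$, so by Proposition~\ref{prop:3D-estimate} one has $\nrm{\nb^2 u}_{C^\alpha\cap\mathring C^\alpha(\tilde U)} \lesssim \nrm{\nb\omg}_{C^\alpha\cap\mathring C^\alpha(\tilde U)}$, provided $\nb\omg$ vanishes at $\mathbf 0$ and the relevant component combination vanishes along $\vec{\frka}_2$. Together with the already-established bound $\nrm{\nb u}_{C^\alpha\cap\mathring C^\alpha}\lesssim\nrm{\omg}_{C^\alpha\cap\mathring C^\alpha}$ and the product rule in $\mathring C^\alpha$, this yields
\begin{equation*}
\frac{d}{dt}\nrm{\nb\omg(t)}_{C^\alpha\cap\mathring C^\alpha(\tilde U)} \lesssim \bigl(\nrm{\omg(t)}_{C^\alpha\cap\mathring C^\alpha}+\nrm{\nb u(t)}_{C^\alpha\cap\mathring C^\alpha}\bigr)\nrm{\nb\omg(t)}_{C^\alpha\cap\mathring C^\alpha},
\end{equation*}
which, since the $C^\alpha\cap\mathring C^\alpha$ norm of $\omg$ is already controlled on $[0,T^*)$ by the proof of Theorem~\ref{thm:lwp-corner}, is a linear Gr\"onwall inequality; it prevents $\nb\omg$ from blowing up before $T^*$.

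The heart of the matter — and what I expect to be the main obstacle — is verifying that the hypotheses of Proposition~\ref{prop:3D-estimate} persist for $\nb\omg(t)$ for all $t<T^*$, i.e.\ that the vanishing conditions stated in the Proposition are \emph{propagated} by the flow. This is exactly analogous to the verification already carried out in Section~\ref{subsec:eandu} that $\omg^1+\omg^2$ stays zero along $\{(z,z,0):z>0\}$, but now one level of differentiation higher and hence more delicate. Concretely I would, working in the rotated coordinates near $\vec{\frka}_2$ from Section~\ref{subsubsec:est2} (so that $\vec{\frka}_2$ becomes the $x_3$-axis and the condition reads: $f^3$ vanishes on $\{x_1=x_2=0\}$ for the $k=3$ piece, no condition for $k=1,2$), apply $\rd_3$ (the tangential derivative along $\vec{\frka}_2$) to the transport equation for the relevant component combination and use the slip boundary identities $u^3(x_1,x_2,0)=0$ and $u^1=u^2$ on $\{x_1=x_2\}$, differentiated once more, to show that the quantities $\nb(\omg^1+\omg^2)$, and (depending on which of the two listed alternatives holds) either $(\omg^1-\omg^2, (\rd_1+\rd_2)(\omg^1-\omg^2))$ or $(\omg^3,(\rd_1+\rd_2)\omg^3)$, satisfy a closed linear ODE system along $\vec{\frka}_2$ with zero initial data, hence stay zero. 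The two alternatives in the statement correspond to the two ways one can arrange that the $\rd_3$-derivative of the combination entering the $R_{ij}$-estimate near $\vec{\frka}_2$ is controlled — one kills the ``$f^3$ on the axis'' obstruction for the differentiated field, the other removes it by forcing $f^3\equiv\omg^3$ itself to vanish to first order there.

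Once the persistence of the vanishing conditions is in hand, the remaining steps are routine: (i) an existence argument producing a $C([0,T);C^{1,\alpha}\cap\mathring C^{1,\alpha})$ solution by the same iteration scheme as in Section~\ref{subsec:eandu}, now carrying the extra derivative and its vanishing constraints through each Picard iterate (the constraints for $\omg^{(n+1)}$ follow from those for $u^{(n)},\omg^{(n)}$ exactly as in the existence proof already given); (ii) uniqueness is inherited from the $C^\alpha$ theory, so nothing new is needed there; and (iii) the continuation criterion: since the $C^\alpha$ norm of $\omg$ does not blow up before $T^*$ and the Gr\"onwall inequality above is then linear in $\nb\omg$, the $C^{1,\alpha}$ norm cannot blow up before $T^*$ either, so the solution remains $C^{1,\alpha}$ on $[0,T^*)$. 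I would present (i)–(iii) tersely, since they mirror the already-written arguments, and devote the bulk of the proof to the propagation-of-vanishing computation, which is where the specific algebraic conditions listed in the Proposition are actually used.
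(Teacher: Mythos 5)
Your proposal follows essentially the same route as the paper: reduce the $C^{1,\alpha}$ propagation to the $C^\alpha$ bound for $R_{ij}\nb\omg$, close a linear Gr\"onwall inequality for $\nrm{\nb\omg}_{C^\alpha\cap\mathring{C}^\alpha}$, and devote the main effort to propagating the vanishing conditions along $\vec{\frka}_2$ via the slip boundary identities, exactly as in Lemma \ref{lem:vanishing} and the subsequent computation. The one point to make precise is the mechanism behind the two alternatives: in the rotated coordinates the quantity actually needed for the Riesz estimate is $\rd_2\omg^2$ (the odd--odd component of $\nb\omg$), and its evolution along $\vec{\frka}_2$ carries the quadratic source $\omg^2\rd_1\omg^3+\omg^3\rd_1\omg^2$, which the two listed hypotheses are the two natural ways of annihilating.
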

\begin{remark}\label{rem:initial}
	One can consider initial data  of the form \begin{equation}  \label{eq:initial}
	\left\{
	\begin{aligned} 
	\omg_0^1(x_1,x_2,x_3) & = -\lmb_0 \chi(x_1+x_2), \\
	\omg_0^2(x_1,x_2,x_3) & = \lmb_0 \chi(x_1+x_2), \\
	\omg_0^3(x_1,x_2,x_3) & = \mu_0 \chi(x_1+x_2)
	\end{aligned}
	\right.
	\end{equation} for some constants $\lmb_0,\mu_0\in\bbR$ and a cut-off function $\chi$ which satisfies $\chi(z) = 1$ for $|z|\le 1$ and $\chi(z)=0$ for $z\ge2$. Note that this $\omega_0$ is divergence-free and compactly supported in $\tilde{U}$. Further taking either $\lmb_0 = 0$ or $\mu_0  = 0$, this initial data satisfies all the assumptions of Proposition \ref{prop:higher}. Moreover, note from \eqref{eq:ODE_octant} that finite-time singularity formation is still possible, by taking either $\lmb_0 = 0,\mu_0<0$ or $\lmb_0<0,\mu_0=0$. 
\end{remark}

The condition $\nabla(\omega^1 +\omega^2 ) = 0$ propagates itself, which is the content of the following lemma: 
\begin{lemma}\label{lem:vanishing}
	Assume that $\nabla(\omega^1_0+\omega^2_0) = 0$ on $\vec{\frka}_2$. Then, $\nabla(\omega^1(t)+\omega^2(t)) = 0$ on  $\vec{\frka}_2$ as long as $\omega$ remains a $C^{1,\alpha}$-solution in $\tilde{U}$. 
\end{lemma}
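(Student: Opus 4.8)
The plan is to localize near the edge $\vec{\frka}_2$ and to run, one derivative higher, the mechanism that already propagates the vanishing of $\omg^1+\omg^2$ there. First I pass to the orthonormal frame $\{e_1,e_2,e_3\}$ of Section~\ref{subsubsec:est2}, in which $\vec{\frka}_2$ is a half of the $x_3$-axis, $\tilde U$ is locally the quarter-space $\{x_1>0,\ x_2>0\}$, and the slip conditions are $u^1=0$ on $\{x_1=0\}$ and $u^2=0$ on $\{x_2=0\}$; in these coordinates $\omg^1+\omg^2$ of the statement is, up to a nonzero constant, the edge component $\omg^3=(\nb\times u)^3$, so the hypothesis and the claim read $\nb\omg^3=0$ on $\vec{\frka}_2$ at $t=0$, resp.\ for all $t$. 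Since $\omg^3=0$ on $\vec{\frka}_2$ for every $t$ — this is exactly the vanishing propagated in the course of proving Theorem~\ref{thm:lwp-corner} (Section~\ref{subsec:eandu}) — the tangential derivative of $\omg^3$ along $\vec{\frka}_2$ is already zero, i.e.\ $\partial_3\omg^3=0$ on $\vec{\frka}_2$, and it only remains to control $\partial_1\omg^3$ and $\partial_2\omg^3$ on $\vec{\frka}_2$.

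The tool is the Cauchy vorticity formula $\omg(t,\Phi_t(a))=D\Phi_t(a)\,\omg_0(a)$, $\Phi_t$ the flow of $u$. Because $u$ is tangent to $\{x_1=0\}$ and to $\{x_2=0\}$, $\Phi_t$ preserves these planes and hence $\vec{\frka}_2$; thus along $\vec{\frka}_2$ the off-diagonal entries $\partial_2\Phi_t^1,\partial_3\Phi_t^1,\partial_1\Phi_t^2,\partial_3\Phi_t^2$ vanish while $\lmb_i:=\partial_i\Phi_t^i>0$. Consequently, for $a\in\vec{\frka}_2$, $\omg^k(t,\Phi_t(a))=\lmb_k\,\omg_0^k(a)$ for $k=1,2$, and $\omg^3(t,\Phi_t(a))=\partial_1\Phi_t^3\,\omg_0^1(a)+\partial_2\Phi_t^3\,\omg_0^2(a)+\lmb_3\,\omg_0^3(a)$. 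As a warm-up I would reprove $\omg^3(t)=0$ on $\vec{\frka}_2$: since $\omg_0^3=0$ on $\vec{\frka}_2$ and, by the curl identities and the slip conditions, $\partial_1u^3=-\omg^2$ on $\{x_1=0\}$ and $\partial_2u^3=\omg^1$ on $\{x_2=0\}$, the variation equation $\tfrac{d}{dt}\partial_j\Phi_t^m=\sum_k(\partial_ku^m)(\Phi_t)\,\partial_j\Phi_t^k$ shows that $M(t):=\partial_1\Phi_t^3\,\omg_0^1+\partial_2\Phi_t^3\,\omg_0^2$ solves $\dot M=(\partial_3u^3)(\Phi_t(a))\,M$, $M(0)=0$ — the quadratic-in-$(\omg_0^1,\omg_0^2)$ terms cancelling pairwise, just as $-\omg^2\omg^1+\omg^1\omg^2$ cancels in the equation for $\omg^3$ on $\vec{\frka}_2$ — so $M\equiv0$.

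For the gradient, differentiate the Cauchy formula once in the space variable, restrict the result to $a\in\vec{\frka}_2$, and feed in $\omg_0^3=0$ and $\nb\omg_0^3=0$ on $\vec{\frka}_2$. Using once more the triangular structure of $D\Phi_t$ and now also of the second-order jet of $\Phi_t$ along $\vec{\frka}_2$ — legitimate because $\omg\in C^{1,\alpha}$ forces $u\in C^{2,\alpha}$ near $\vec{\frka}_2$ by the estimates of Section~\ref{sec:estimate} applied to $\nb\omg$ — one obtains $\lmb_i(t)\,(\partial_i\omg^3)(t,\Phi_t(a))=G_i(t;a)$ for $i=1,2$, with $G_i$ an explicit bilinear expression in the first- and second-order jets of $\Phi_t$ along $\vec{\frka}_2$ and the data $\omg_0^1,\omg_0^2,\partial_i\omg_0^1,\partial_i\omg_0^2$, and $G_i(0;a)=0$ since $\Phi_0=\mathrm{id}$ and $\nb\omg_0^3=0$ on $\vec{\frka}_2$. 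The crux is then to show $G_i\equiv0$: differentiating $G_i$ in $t$ with the first and second variation equations and the relations $\omg^k(t,\Phi_t(a))=\lmb_k\omg_0^k(a)$ ($k=1,2$) established above, the terms quadratic in $(\omg_0^1,\omg_0^2)$ — which are the ones that do not vanish on $\vec{\frka}_2$ — cancel pairwise exactly as in the warm-up, leaving the closed linear equation $\partial_tG_i=(\partial_3u^3)(\Phi_t(a))\,G_i$. Hence $G_i\equiv0$, and since $\lmb_i>0$ we get $\partial_1\omg^3=\partial_2\omg^3=0$ on $\vec{\frka}_2$; together with $\partial_3\omg^3=0$ this is $\nb\omg^3=0$ on $\vec{\frka}_2$, i.e., returning to the original coordinates, $\nb(\omg^1(t)+\omg^2(t))=0$ on $\vec{\frka}_2$.

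The main obstacle is precisely this last cancellation: arranging the second-jet-of-flow contributions so that the quadratic-in-$(\omg_0^1,\omg_0^2)$ pieces visibly annihilate and the linear ODE for $G_i$ appears — this is where the slip conditions and the curl identities on the two bounding planes are used essentially, and it is the one-derivative analogue of the identity that makes $M$ linear. A more computational alternative is to differentiate the transport equation for $\omg^3$ directly in the transverse directions, restrict to $\vec{\frka}_2$, and substitute the boundary identities ($\partial_1u^2=\partial_2u^1=0$, $\partial_3u^1=\partial_3u^2=0$ on the relevant planes, $\partial_1u^3=-\omg^2$, $\partial_2u^3=\omg^1$) together with the curl relations; one then has to check that the surviving $\omg^1,\omg^2$-terms recombine into something linear in $(\partial_1\omg^3,\partial_2\omg^3)$, which is the same cancellation in another guise.
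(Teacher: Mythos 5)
Your strategy is essentially the paper's: pass to coordinates adapted to the edge, use the slip conditions on the two bounding planes together with the curl identities $\rd_1u^3=-\omg^2$, $\rd_2u^3=\omg^1$ there, and show that the transverse derivatives of the edge-parallel vorticity component obey a closed linear transport law along the (invariant) edge. Whether one differentiates the Cauchy formula, as you do, or differentiates the vorticity equation directly and restricts to $\vec{\frka}_2$, as the paper does, is a repackaging of the same computation — you concede as much in your final paragraph. Your warm-up ($M\equiv 0$) is correct, as is the reduction of the tangential derivative to the already-known vanishing of the edge component.

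The gap is that the entire content of the lemma sits in the one step you defer: the claim that $\rd_t G_i=(\rd_3u^3)(\Phi_t(a))\,G_i$ because the quadratic terms ``cancel pairwise exactly as in the warm-up.'' This is asserted, not proved, and the advertised mechanism is not adequate. If you compute $\dot G_1$ from the second variation equation, then besides $(\rd_3u^3)G_1$ and the pair $-\omg^2\lmb_1\rd_1\omg_0^1+\omg^1\lmb_2\rd_1\omg_0^2$ coming from $\dot J_{31},\dot J_{32}$, two new families of terms appear that have no counterpart in the warm-up: (a) $\sum_{k,l}\rd_l\rd_k u^3\,J_{l1}J_{km}\,\omg_0^m$, which brings in the transverse second derivatives $\rd_{11}u^3$ and $\rd_{12}u^3$ on the edge — here $\rd_{12}u^3=\rd_1\omg^1$ can be justified from the slip conditions, but $\rd_{11}u^3=\rd_1\rd_3u^1-\rd_1\omg^2$ leaves behind the genuinely transverse quantity $\rd_1\rd_3u^1$; and (b) $\sum_{k}\rd_k u^3\,\rd_1 J_{km}\,\omg_0^m$, which brings in the second jet of the flow ($\rd_{11}\Phi^1$, $\rd_{12}\Phi^1$, $\rd_{12}\Phi^2$), each solving its own nontrivial ODE and none expressible through $G_1$. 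After the cancellations that do occur pairwise, one is left already at $t=0$ with the residue $\omg_0^1\,\rd_3(\rd_1 u_0^1)$ — the vorticity component normal to $\{x_1=x_2\}$ times the edge-tangential derivative of the wall-normal stretching — and none of the identities you invoke (slip conditions, curl relations, $M\equiv0$) forces this to vanish. This is exactly the term the paper's proof must absorb (there it arises from $\omg^3\rd_{33}u^1$ in the paper's frame), and it is where the proof does its real work. That the cancellation cannot be taken for granted is underscored by Proposition \ref{prop:higher}: the analogous computation for $\rd_2\omg^2$ produces a residue $\omg^2\rd_1\omg^3+\omg^3\rd_1\omg^2$ that does \emph{not} cancel, which is precisely why extra hypotheses are imposed there. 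So as written, your argument reduces the lemma to an unverified cancellation claim, and the claim in the form you state it (``pairwise, as in the warm-up'') is not correct; the proposal therefore does not constitute a proof.
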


In the remainder of this section,  it will be convenient to rotate the coordinates system: define \begin{equation*}
\begin{split}
y_1 := \frac{1}{\sqrt{2}} (x_1+x_2),\quad y_2 := \frac{1}{\sqrt{2}}(-x_1+x_2), \quad y_3:=x_3. 
\end{split}
\end{equation*} We shall write the components of $u$ and $\omg$ with respect to this coordinate system as well. Then, the boundary conditions on $\vec{\frka}_2$ take the form \begin{equation}\label{eq:BC-frka2}
\begin{split}
\rd_2^k\rd_1^ju^3 = 0, \quad \rd_1^k\rd_3^ju^2 = 0
\end{split}
\end{equation} for arbitrary integers $k, j \ge 0$. Using these, on $\vec{\frka}_2$, we note that \begin{equation}\label{eq:BCC-frka2}
\begin{split}
\rd_1^k\rd_2u^1 = \rd_1^{k+1}u^2-\rd_1^k\omg^3 = -\rd_1^k\omg^3,\quad \rd_1^j\rd_3u^1 = \rd_1^j\omg^2+\rd_1^{j+1}u^3 = \rd_1^j\omg^2
\end{split}
\end{equation} for any integers $k,j\ge0$. We shall assume that in some time interval, the solution $(\omg,u)$ belongs to $C^{1,\alpha}\times C^{2,\alpha}$, which can be justified after showing propagation of the vanishing conditions in Proposition \ref{prop:higher}. 

\begin{proof}[Proof of Lemma \ref{lem:vanishing}]
We compute \begin{equation*}
\begin{split}
\rd_t \rd_1\omg^1 + u\cdot\nb \rd_1\omg^1 & =  \rd_{11}u^1\omg^1+\rd_1u^1\rd_1\omg^1 + \rd_{12}u^1\omg^2+\rd_2u^1\rd_1\omg^2 +\rd_{13}u^1\omg^3+\rd_3u^1\rd_1\omg^3 \\
 &\qquad - \rd_1u^1\rd_1\omg^1 - \rd_1u^2\rd_2\omg^1 - \rd_1u^3\rd_3\omg^1 
\end{split}
\end{equation*} Restricting on $\vec{\frka}_2$, we have $\omg^1 = 0$ and applying \eqref{eq:BC-frka2}, \eqref{eq:BCC-frka2} gives that \begin{equation*}
\begin{split}
\rd_t \rd_1\omg^1 + u\cdot\nb \rd_1\omg^1 & = -\rd_1\omg^3\omg^2 - \omg^3\rd_1\omg^2 + \rd_1\omg^2\omg^3 + \omg^2\rd_1\omg^3 = 0. 
\end{split}
\end{equation*} This shows that the condition $\rd_1\omg^1 = 0$ propagates in time. Next, we compute \begin{equation*}
\begin{split}
\rd_t \rd_3\omg^1 + u\cdot\nb \rd_3\omg^1 & =  \rd_{31}u^1\omg^1+\rd_1u^1\rd_3\omg^1 + \rd_{32}u^1\omg^2+\rd_2u^1\rd_3\omg^2 +\rd_{33}u^1\omg^3+\rd_3u^1\rd_3\omg^3 \\
&\qquad - \rd_3u^1\rd_1\omg^1 - \rd_3u^2\rd_2\omg^1 - \rd_3u^3\rd_3\omg^1 .
\end{split}
\end{equation*} We simplify the right hand side, restricting the equation on $\vec{\frka}_2$. First, recalling $\omg^1 = \rd_1\omg^1 = 0$, we cancel a few terms, and rewrite the right hand side as follows: \begin{equation*}
\begin{split}
(\rd_1u^1-\rd_3u^3)\rd_3\omg^1  + \rd_3\omg^2\rd_2u^1 + \rd_3\omg^3\rd_3u^1 + \omg^2\rd_{23}u^1 + \omg^3\rd_{33}u^1 - \rd_3u^2\rd_2\omg^1.
\end{split}
\end{equation*} The last term vanishes from \eqref{eq:BC-frka2}. Rewriting the remaining four terms using \eqref{eq:BCC-frka2}, \begin{equation*}
\begin{split}
\rd_3\omg^2\rd_2u^1 + \rd_3\omg^3\rd_3u^1 + \omg^2\rd_{23}u^1 + \omg^3\rd_{33}u^1 = \omg^2(\rd_3\omg^3+\rd_2\omg^2) = -\omg^2\rd_1\omg^1 = 0. 
\end{split}
\end{equation*} Hence, \begin{equation*}
\begin{split}
\rd_t \rd_3\omg^1 + u\cdot\nb \rd_3\omg^1 & = (\rd_1u^1-\rd_3u^3)\rd_3\omg^1 ,
\end{split}
\end{equation*} which shows that $\rd_3\omg^1 = 0$ propagates as well. Propagation of $\rd_2\omg^1 = 0$ can be proved in the same way, noting that the conditions \eqref{eq:BC-frka2} and \eqref{eq:BCC-frka2} are symmetric in $y_2$ and $y_3$. \end{proof}

\begin{proof}[Proof of Proposition \ref{prop:higher}]
	All that there is to show is the $C^\alpha$-bound for the Riesz transforms $R_{ij}\nb\omega$. It suffices to consider components of $\nb\omega$ which extends as an odd function of both $y_3$ and $y_2$, around $\vec{\frka}_2$. Note that $\omega_1$, $\omega_2$, and $\omega_3$ are respectively odd-odd, even-odd, and odd-even in $y_2$ and $y_3$. Hence, for the $C^\alpha$-estimate of $R_{ij}\nb\omega$, it suffices to propagate the vanishing conditions for $\rd_1\omega^1$, $\rd_2\omega^2$, and $\rd_3\omega^3$, along $\vec{\frka}_2$. From the divergence-free condition, it suffices to consider the first two. 
	
	Lemma \ref{lem:vanishing} takes care of $\rd_1\omega^1$, so let us
now turn to $\rd_2\omg^2$: \begin{equation*}
	 \begin{split}
	 \rd_t \rd_2\omg^2 + u\cdot\nb \rd_2\omg^2 & =  \rd_2\omg^1\rd_1u^2 + \omg^1\rd_{12}u^2 +\rd_2\omg^2\rd_2u^2+\omg^2\rd_{22}u^2 + \rd_2\omg^3\rd_3u^2 + \omg^3\rd_{23}u^2 \\
	 &\qquad - \rd_2u^1\rd_1\omg^2 - \rd_2u^2\rd_2\omg^2 - \rd_2u^3\rd_3\omg^2 \\
	 & =   \rd_2\omg^1\rd_1u^2 + \omg^1\rd_{12}u^2 + \omg^2\rd_{22}u^2 + \rd_2\omg^3\rd_3u^2 + \omg^3\rd_{23}u^2 - \rd_2u^1\rd_1\omg^2  - \rd_2u^3\rd_3\omg^2 \\
	 & =    \omg^2\rd_{22}u^2 + \omg^3\rd_{23}u^2  - \rd_2u^1\rd_1\omg^2  
	 \end{split}
	 \end{equation*} where we have used \eqref{eq:BC-frka2} and $\omg^1=0$ to get the last equality. We now note that \begin{equation*}
	 \begin{split}
	   \omg^3\rd_{23}u^2  - \rd_2u^1\rd_1\omg^2  = \omg^3\rd_2(\rd_2u^3 - \omg^1) + \omg^3\rd_1\omg^2 = \omg^3\rd_1\omg^2 
	 \end{split}
	 \end{equation*} and \begin{equation*}
	 \begin{split}
	 \omg^2\rd_{22}u^2 = -\omg^2\rd_{21}u^1-\omg^2\rd_{23}u^3 = \omg^2\rd_1\omg^3 - \omg^2\rd_3\omg^1 = \omg^2\rd_1\omg^3 . 
	 \end{split}
	 \end{equation*} Hence,  for the propagation of $\rd_2\omg^2=0$, it suffices to have \begin{equation*}
	 \begin{split}
	 \omega^2\rd_1\omega^3 + \omg^3\rd_1\omg^2 = 0. 
	 \end{split}
	 \end{equation*} This is precisely the reason why we need the additional assumption that either \begin{equation*}
	 \begin{split}
	 \omg^2 =0, \quad\rd_1\omg^2 = 0 
	 \end{split}
	 \end{equation*} or \begin{equation*}
	 \begin{split}
	 \omega^3 = 0,\quad\rd_1\omega^3 = 0 
	 \end{split}
	 \end{equation*} along $\vec{\frka}_2$. 
	 
	 It still remains to show that the additional assumptions themselves propagate in time. We only consider the case of $\omg^2 =0, \rd_1\omg^2 = 0$. From \begin{equation*}
	 \begin{split}
	 \rd_t\omg^2 + u\cdot\nb\omg^2 = \omg\cdot\nb u^2
	 \end{split}
	 \end{equation*} and \eqref{eq:BC-frka2}, it is immediate to see the propagation of $\omg^2 =0$. Next, \begin{equation*}
	 \begin{split}
	 \rd_t\rd_1\omg^2 + u\cdot\nb\rd_1\omg^2 = \rd_1\omg\cdot\nb u^2 + \omg\cdot\nb\rd_1 u^2 - \rd_1u\cdot\nb\omg^2. 
	 \end{split}
	 \end{equation*} From \eqref{eq:BC-frka2}, $\rd_1\omg\cdot\nb u^2 = \rd_1\omg^2 \rd_2u^2$ on $\vec{\frka}_2$, and $\omg\cdot\nb\rd_1 u^2  = \omg^2 \rd_{12}u^2 = 0$. Lastly, $\rd_1u\cdot\nb\omg^2 = \rd_1u^1\rd_1\omg^2$. Hence \begin{equation*}
	 \begin{split}
	 \rd_t\rd_1\omg^2 + u\cdot\nb\rd_1\omg^2 = (\rd_1u^1+\rd_2u^2)\rd_1\omg^2
	 \end{split}
	 \end{equation*} and the propagation of $\rd_1\omg^2 =0$ is proved. 
	 
	 From the propagation of vanishing conditions for $\rd_i\omg^i$, one can establish that $\nrm{R_{ij}\nb\omg}_{C^\alpha(\tilde{U})} \lesssim \nrm{\nb\omg}_{C^\alpha(\tilde{U})}$. With this it is not difficult to show that the unique $C^{\alpha}$ solution $\omg$ in $[0,T^*)$ actually belongs to $C^{1,\alpha}$. 	 
\end{proof}

\section{Finite time singularity formation}\label{sec:blowup}

In this section, we conclude the proof of finite-time singularity formation. The proof consists of finding explicit homogeneous solutions which blows up in finite time and cutting those at spatial infinity. 

\subsection{Explicit blow-up solutions}\label{subsec:explicit}

We consider constant vorticity defined in $\tilde{U}$ of the form \begin{equation}\label{eq:vort_octant}
\begin{split}
\omega_0 = \lambda_0 \begin{pmatrix} 
-1 \\
1 \\
0
\end{pmatrix} + \mu_0 \begin{pmatrix}
0 \\
0 \\
1
\end{pmatrix},
\end{split}
\end{equation} for some $\lambda_0, \mu_0 \in \mathbb{R}$. The corresponding velocity with non-penetration boundary conditions on $\partial\Omega$ is explicitly given by \begin{equation}\label{eq:vel_octant}
\begin{split}
u_0 (x_1,x_2,x_3) = \frac{1}{3}\begin{pmatrix}
\left( -\lambda_0  + 2\mu_0\right) x_1 - 3\mu_0 x_2 + 3\lambda_0 x_3  \\
\left( - \lambda_0 - \mu_0 \right)x_2 + 3\lambda_0 x_3 \\
\left( 2\lambda_0 - \mu_0 \right)x_3 
\end{pmatrix}. 
\end{split}
\end{equation} Taking the gradient, we obtain \begin{equation}\label{eq:velgrad_octant}
\begin{split}
\nabla u_0 = \frac{1}{3}\begin{pmatrix}
-\lambda_0 + 2\mu_0 & -3\mu_0 & 3\lambda_0 \\
0 & -\lambda_0 - \mu_0 & 3\lambda_0 \\
0 & 0 & 2\lambda_0 - \mu_0 
\end{pmatrix}.
\end{split}
\end{equation} Next, one may compute the following product: \begin{equation*}
\begin{split}
\omega_0\cdot\nabla u_0 = [\nabla u_0]\omega_0 = \frac{1}{3}\begin{pmatrix}
\lambda_0^2 - 2\lambda_0\mu_0 \\
-\lambda_0^2 + 2\lambda_0\mu_0 \\
2\lambda_0\mu_0 - \mu_0^2
\end{pmatrix}.
\end{split}
\end{equation*} From the above, one sees that the unique solution to the $3D$ Euler equations with initial vorticity of the form \eqref{eq:vort_octant} has the same from for all times -- writing the solution as \begin{equation*}
\begin{split}
\omega(t) = \lambda(t) \begin{pmatrix} 
-1 \\
1 \\
0
\end{pmatrix} + \mu(t) \begin{pmatrix}
0 \\
0 \\
1
\end{pmatrix},
\end{split}
\end{equation*} one obtains the following ODE system of two variables: \begin{equation}\label{eq:ODE_octant}
\left\{
\begin{aligned} 
\dot{\lambda} &= \frac{2}{3}\lambda\mu - \frac{1}{3}\lambda^2, \\
\dot{\mu} &= \frac{2}{3}\lambda\mu - \frac{1}{3} \mu^2 .
\end{aligned}
\right.
\end{equation} We prove blow-up for the above ODE system assuming either one of the following conditions: \begin{enumerate}
	\item $\lmb \ne \mu$ and $\lmb\mu\ne0$, or 
	\item $\lmb=\mu>0.$ 
\end{enumerate} These are exactly the sufficient conditions stated in Theorem \ref{thm:blowup-corner}. First, assuming $\lmb_0=\mu_0$, it is obvious that the ODE system blows up in finite time if and only if the initial data is strictly positive. Now, since the system is symmetric in $\lmb$ and $\mu$, we may assume that $\lmb_0>\mu_0$ if $\lmb_0\ne\mu_0$. We then write down the equation for $\lmb-\mu$: \begin{equation*}
\begin{split}
\frac{d}{dt}(\lmb-\mu) = -\frac{1}{3}(\lmb-\mu)(\lmb+\mu).
\end{split}
\end{equation*} In particular, $\lmb-\mu$ remains strictly positive. Now, \begin{equation*}
\begin{split}
\frac{d}{dt} \mu = \frac{\mu}{3}( 2(\lmb-\mu)+\mu) .
\end{split}
\end{equation*} If in addition we had $\mu_0>0$, then $\mu>0$ and the above ODE clearly blows up as $2(\lmb-\mu)+\mu>\mu$. Now we may assume $\mu_0<0$ and there are two cases: (i) $\lmb_0>0$ and (ii) $\lmb_0<0$. In the former, we have that $\lmb$ is monotonically decreasing in time since \begin{equation*}
\begin{split}
\frac{d}{dt}\lmb = \frac{\lmb}{3}(2\mu-\lmb) 
\end{split}
\end{equation*} and $2\mu-\lmb<0$. Then, returning to the equation for $\mu$, it is clear that $\mu\to-\infty$ in finite time. Finally in the latter case, we have $0>\lmb_0>\mu_0$ and in this case it is not hard to see that $\lmb-\mu \to+\infty$ in finite time since both $\lmb$ and $\mu$ remain negative.

 % Taking either $\lambda_0 = \mu_0 > 0$, or $\mu_0 = 0, \lambda_0 < 0$, one sees that the solution blows up in finite time.

\subsection{A cut-off argument}\label{subsec:cutoff}

We shall prove that for initial vorticity which is of the form \eqref{eq:vort_octant} only near the origin, finite time blow-up occurs, as long as the solution to the ODE system \eqref{eq:ODE_octant} blows up. To this end, take an explicit solution \begin{equation}\label{eq:explicit}
\begin{split}
\omega^{SI}(t) = \lambda(t) \begin{pmatrix} 
-1 \\
1 \\
0
\end{pmatrix} + \mu(t) \begin{pmatrix}
0 \\
0 \\
1
\end{pmatrix}
\end{split}
\end{equation} for which we have finite time blow-up at time $T^*>0$: $\lim_{t \rightarrow T^*}(|\mu(t)| + |\lmb(t)|) = +\infty$. We denote the corresponding velocity as \begin{equation}
u^{SI}(t) =  \frac{1}{3}\begin{pmatrix}
-\lambda(t) + 2\mu(t) & -3\mu(t) & 3\lambda(t) \\
0 & -\lambda(t) - \mu(t) & 3\lambda(t) \\
0 & 0 & 2\lambda(t) - \mu(t) 
\end{pmatrix} \begin{pmatrix}
x_1 \\ x_2 \\x_3 
\end{pmatrix}
\end{equation} on $0\le t<T^*$. We now take initial data of the form \begin{equation*}
\begin{split}
\omega_0 = \omega^{SI}_0 + \tilde{\omega}_0
\end{split}
\end{equation*} on $\tilde{U}$, where $\omega_0$ satisfies the assumptions of Theorem \ref{thm:lwp-corner}. This in particular implies that $\tilde{\omega}_0 \in C^\alpha \cap \mathring{C}^\alpha$ and that $\tilde{\omega}_0^1+\tilde{\omega}_0^2$  vanishes on $\{ (x_1,x_2,x_3): x_1=x_2\ge 0, x_3=0\}$. We may further assume that $\tilde{\omega}_0(\textbf{0}) =0$. One may even take $\tilde{\omega}_0 \in C^\infty$ identically zero on $|x| \le 1$ and equal to $-\omega^{SI}_0$ on $|x|>1$, so that $\omega_0$ belongs to $C^\infty_c$ with support contained in $\tilde{U} \cap \{ |x| \le 1 \}$. Such $\tilde{\omega}_0$ certainly exists, as one can simply use $\omega_0$ as in \eqref{eq:initial}.

Indeed, there is a very robust way to obtain compactly supported and divergence-free vector field with desired profile in the region $\tilde{U}\cap\{ |x|\le 1 \}$, using the well-known Bogovski\v{i} operator introduced in \cite{Bog}. Detailed information regarding the properties of this operator can be found in several textbooks; see for instance \cite{AcDu,Gal}. To be precise, we take \begin{equation*}
\begin{split}
f(x):= h(|x|) \omega^{SI}_0(x)
\end{split}
\end{equation*} where $h$ is a $C^\infty$-function satisfying $h(z)=1$ for $|z|\le 1$ and $h(z)=0$ for $|z|\ge2$. Now we recall the theorem of Bogovski\v{i}:
\begin{theorem}[{{\cite[Lemma III.3.1 and Remark III.3.2]{Gal}}}]
	Let $\Omega\subset\bbR^n$ be a bounded open set with Lipschitz boundary. There exists a linear operator defined on the set of mean zero functions  $\mathrm{div}^{-1}_\Omega : L^2(\Omega) \rightarrow (H^1_0(\Omega))^3 $  satisfying the following: \begin{equation*}
	\begin{split}
	\nabla\cdot (\mathrm{div}^{-1}_\Omega g) = g, 
	\end{split}
	\end{equation*} \begin{equation*}
	\begin{split}
	\nrm{ \mathrm{div}^{-1}_\Omega g}_{H^m_0(\Omega)} \lesssim_{\Omega,n,m} \nrm{g}_{H^{m-1}(\Omega)},\quad m\ge 1.  
	\end{split}
	\end{equation*}
\end{theorem}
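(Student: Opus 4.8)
The plan is to recall the classical Bogovski\v{i} construction, establishing the result first for domains that are star-shaped with respect to a ball and then bootstrapping to general bounded Lipschitz domains by a finite partition of unity.

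First I would treat the model case in which $\Omega$ is star-shaped with respect to an open ball $B$ with $\bar{B}\subset\Omega$. Fixing $\theta\in C^\infty_c(B)$ with $\int_B\theta=1$, I would define, for $g\in C^\infty_c(\Omega)$ with $\int_\Omega g=0$,
\begin{equation*}
\begin{split}
(\mathrm{div}^{-1}_\Omega g)(x) := \int_\Omega g(y)\,\frac{x-y}{|x-y|^n}\left(\int_{|x-y|}^\infty \theta\Bigl(y+r\tfrac{x-y}{|x-y|}\Bigr)r^{n-1}\,dr\right)dy,
\end{split}
\end{equation*}
and check by a radial integration by parts the pointwise identity $\nb\cdot(\mathrm{div}^{-1}_\Omega g)=g-\theta\int_\Omega g$, which reduces to $\nb\cdot(\mathrm{div}^{-1}_\Omega g)=g$ under the mean-zero hypothesis. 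The star-shapedness forces $\supp\mathrm{div}^{-1}_\Omega g$ to be a compact subset of $\Omega$, so once the estimates below are in place the output will genuinely lie in $H^1_0$.

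The heart of the matter is the Sobolev bound. Differentiating the kernel once, each entry of $\nb\,\mathrm{div}^{-1}_\Omega g$ splits as a principal-value singular integral operator whose kernel is homogeneous of degree $-n$ with vanishing mean over spheres --- i.e.\ of Calder\'on--Zygmund type --- plus an operator with a weakly singular, locally integrable kernel. Since $\Omega$ is bounded, Calder\'on--Zygmund theory then gives $\nrm{\nb\,\mathrm{div}^{-1}_\Omega g}_{L^2(\Omega)}\lesssim_\Omega\nrm{g}_{L^2(\Omega)}$; combined with the Poincar\'e inequality this yields the $m=1$ bound and allows $\mathrm{div}^{-1}_\Omega$ to be extended by density to a bounded operator $L^2(\Omega)\to (H^1_0(\Omega))^n$. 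For $m\ge2$ I would differentiate under the integral sign and argue by induction, once more reducing the top-order contributions to Calder\'on--Zygmund estimates, which produces $\nrm{\mathrm{div}^{-1}_\Omega g}_{H^m_0(\Omega)}\lesssim\nrm{g}_{H^{m-1}(\Omega)}$. This Calder\'on--Zygmund step is the one genuine obstacle; everything else is bookkeeping.

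Finally, for an arbitrary bounded Lipschitz domain I would use that $\Omega=\bigcup_{j=1}^N\Omega_j$ with each $\Omega_j$ star-shaped with respect to a ball, choose a subordinate partition of unity $\{\phi_j\}$, and write $g=\sum_j g_j$ with $g_j:=\phi_j g-c_j\psi_j$ for fixed $\psi_j\in C^\infty_c(\Omega_j)$ of unit integral and constants $c_j$ selected recursively so that each $g_j$ has zero mean on $\Omega_j$ while the corrections telescope to keep $\sum_j g_j=g$ --- which is possible exactly because $\int_\Omega g=0$. Setting $\mathrm{div}^{-1}_\Omega g:=\sum_j\mathrm{div}^{-1}_{\Omega_j}g_j$, with each summand extended by zero outside $\Omega_j$, and invoking the star-shaped case on each piece gives the assertion, since $|c_j|\lesssim\nrm{g}_{H^{m-1}(\Omega)}$. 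Full details can be found in \cite{Gal}.
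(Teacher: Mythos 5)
Your proposal is correct and is precisely the classical Bogovski\v{i} argument; the paper itself offers no proof of this statement and simply cites it from \cite{Gal}, so you are reproducing the proof of the quoted reference. The only point worth tightening is that in the partition-of-unity step the correction functions $\psi_j$ must be supported in the (suitably ordered) overlaps $\Omega_j\cap\Omega_{j+1}$ for the telescoping of the constants $c_j$ to preserve $\sum_j g_j=g$ while keeping each $g_j$ supported in $\Omega_j$, exactly as in Lemma III.3.2 of \cite{Gal}.
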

We take the operator $\mathrm{div}^{-1}_\Omega$ with $\Omega = \tilde{U}\cap\{ |x| <3 \}$ and $g = \nabla\cdot f$. From the explicit kernel representation of the Bogovski\v{i} operator, \cite[(III.3.8)]{Gal}, it can be easily seen that $ \mathrm{div}^{-1}_\Omega g$ is supported away from $\{|x|=3\}$ since $g$ is. In particular, we have that $g \in C^\infty_c(\tilde{U})$. Then, we may set \begin{equation*}
\begin{split}
\omega_0 := f-  \mathrm{div}^{-1}_{\tilde{U}\cap\{ |x| <3 \}} g
\end{split}
\end{equation*} The function $\mathrm{div}^{-1}_{\tilde{U}\cap\{ |x| <3 \}} g$ vanishes on $\partial\tilde{U}$, and hence $\omega_0$ satisfies the assumptions of Theorem \ref{thm:lwp-corner}.

Now assume towards a contradiction that the unique local solution $(\omega(t),u(t))$ (given by Theorem \ref{thm:lwp-corner}) with initial data $\omega_0$ is global in time. Then, we may set \begin{equation*}
\begin{split}
M := \sup_{0\le t\le T^*} (\nrm{\omg(t)}_{C^\alpha} + \nrm{\nb u(t)}_{C^\alpha}).
\end{split}
\end{equation*} We now \textit{define} $\tilde{\omega}(t):=\omega(t) - \omega^{SI}(t)$ and $\tilde{u}(t):=u(t)-u^{SI}(t)$. These functions are well-defined for any $0<t<T^*$, and we see from \begin{equation*}
\begin{split}
\rd_t \omega + u\cdot\nabla \omega = [\nb u] \omega 
\end{split}
\end{equation*} and \begin{equation*}
\begin{split}
\rd_t \omega^{SI} + u^{SI}\cdot\nabla \omega^{SI} = [\nb u^{SI} ]\omega^{SI}
\end{split}
\end{equation*} that $\tilde{\omega}$ must satisfy the following equation: \begin{equation*}
\begin{split}
\partial_t \tilde{\omega} + u\cdot\nabla  \tilde{\omega}=[\nabla u]\tilde{\omega} + [\nabla \tilde{u}]\omega^{SI} - \tilde{u}\cdot\nabla \omega^{SI}=[\nabla u]\tilde{\omega} + [\nabla \tilde{u}]\omega^{SI}.  
\end{split}
\end{equation*} The latter identity follows since $\nabla\omega^{SI} \equiv 0$. Let us propagate that $\tilde{\omega} \in C^\alpha \cap \mathring{C}^\alpha$ and $\tilde{\omega}(t,\textbf{0}) = 0$, on $0\le t<T^*$. We rewrite the equation along the flow $\Phi$ generated by $u$: $\rd_t\Phi(t,x)=u(t,\Phi(t,x))$, $\Phi(0,x)=x$. This gives \begin{equation*}
\begin{split}
\frac{d}{dt}( \tilde{\omega}(t) \circ \Phi(t)) =  [\nabla u\circ \Phi ]\tilde{\omega} \circ\Phi + [\nabla \tilde{u}\circ\Phi] \omega^{SI}\circ\Phi.  
\end{split}
\end{equation*} {We claim that $\nb \tilde{u}(t,\textbf{0})=0$}, which follows from writing down the first-order Taylor expansion of $\tilde{u}(t)$ in $x$ and imposing the boundary condition as well as the condition that the curl vanishes at the origin. To see this, we write \begin{equation*}
\begin{split}
\begin{pmatrix}
\tilde{u}_1 \\ \tilde{u}_2 \\ \tilde{u}_3
\end{pmatrix} = \begin{pmatrix}
ax_1+bx_2+cx_3 \\
dx_1+ex_2+fx_3 \\
gx_1+hx_2+(-a-e)x_3
\end{pmatrix} + o(|x|)
\end{split}
\end{equation*} where $\rd_{x_3}\tilde{u}_3(0)=-a-e$ follows from the divergence-free condition. From the slip boundary conditions at $\{x_3=0\}$, $\{x_1=x_2\}$, and $\{x_2=x_3\}$, we respectively obtain that $g=h=0$ and $a+b=d+e$, $c=f$ and $d=0, c+f=-a-e$. Then the matrix simplifies into \begin{equation*}
\begin{split}
\begin{pmatrix}
ax_1 + bx_2 - (3a+2b)x_3 \\
(a+b)x_2 - (3a+2b)x_3\\
-(2a+b)x_3 
\end{pmatrix}.
\end{split}
\end{equation*} Finally, the corresponding vorticity vector is given by \begin{equation*}
\begin{split}
\begin{pmatrix}
3a+2b\\
-3a-2b\\
-b
\end{pmatrix}
\end{split}
\end{equation*} so that the vanishing condition for the curl imposes $a=b=0$, finishing the proof that $|\tilde{u}(x)|=o(|x|)$. Since we know that $\tilde{u}\in C^{1,\alpha}$, this implies that $|\nabla\tilde{u}(x)| = O(|x|^\alpha)$. 

Now observing that $\Phi(t, \textbf{0} ) = { \bf 0}$ and evaluating the equation at $x = \textbf{0}$,  we have that $\frac{d}{dt} \tilde{\omega}(t,\textbf{0}) = 0$, which gives $\tilde{\omega}(t,\textbf{0}) = 0$ on $0\le t<T^*$. Next, \begin{equation*}
\begin{split}
\frac{d}{dt} \nrm{ \tilde{\omega} \circ \Phi }_{C^\alpha} & \le \nrm{\nb u \circ \Phi}_{C^\alpha} \nrm{\tilde{\omega} \circ \Phi}_{L^\infty} + \nrm{\nb u \circ \Phi}_{L^\infty} \nrm{\tilde{\omega} \circ \Phi}_{C^\alpha} \\
& \qquad + \nrm{\nb \tilde{u} \circ \Phi}_{C^\alpha} \nrm{{\omega^{SI}} \circ \Phi}_{L^\infty} + \nrm{\nb \tilde{u} \circ \Phi}_{L^\infty} \nrm{{\omega^{SI}} \circ \Phi}_{C^\alpha}. 
\end{split}
\end{equation*} Using regularity of the flow $\sup_{0\le t\le T^*} \nrm{\nabla\Phi}_{C^\alpha} \le CM$ and $\nrm{\nabla\tilde{u}}_{C^\alpha} \le C \nrm{\tilde{\omega}}_{C^\alpha}$, we deduce that \begin{equation*}
\begin{split}
\frac{d}{dt} \nrm{\tilde{\omega}}_{C^\alpha} \le (M + \nrm{\omega^{SI}}_{L^\infty([0,T];C^\alpha)})\nrm{\tilde{\omega}}_{C^\alpha} 
\end{split}
\end{equation*} for any $T<T^*$. Hence, it follows that \begin{equation*}
\begin{split}
 \nrm{\tilde{\omega}(T)}_{C^\alpha}  \le C = C(M,T,\nrm{\tilde{\omega}_0}_{C^\alpha}). 
\end{split}
\end{equation*} Using the above with $\tilde{\omega}(T,\textbf{0}) = 0$, $|\tilde{\omega}(T,x) | \le C|x|^\alpha$ and in particular by taking $|x|$ sufficiently small, we obtain that  $$\Vert \omega(T) \Vert_{L^\infty} \ge  \Vert \omega^{SI}(T)\Vert_{L^\infty}$$ for any $T<T^*$. Taking $T \rightarrow T^*$, we have from the choice of $\omega^{SI}$ that $\nrm{\omega^{SI}(T)}_{L^\infty} \rightarrow +\infty$, which is a contradiction. The proof of Theorem \ref{thm:blowup-corner} is now complete. \qedsymbol

\appendix

\section*{Appendix A. Singular integral formulas for the velocity gradient}
In this section, we demonstrate explicitly that the velocity gradient in 3D can be expressed in terms of a linear combination of the vorticity and its double Riesz transforms. 
We have \begin{equation*}
\begin{split}
\nabla u(x) = \nabla K * \omega (x) + \frac{1}{3}\begin{pmatrix}
0 & - \omega_3(x) & \omega_2(x) \\
\omega_3(x) & 0 & -\omega_1(x) \\
-\omega_2(x) & \omega_1(x) & 0 
\end{pmatrix}. 
\end{split}
\end{equation*} Explicitly writing it out, 
\begin{equation}\label{eq:pr1_u}
\begin{split}
&\rd_1 u_1(x) = \frac{1}{4\pi}\int_{\mathbb{R}^3} \frac{-3y_1y_3}{|y|^5}\omega_2(x-y) + \frac{3y_1y_2}{|y|^5}  \omega_3(x-y) dy \\
&\rd_1 u_2(x) = \frac{1}{3}\omega_3(x) + \frac{1}{4\pi}\int_{\mathbb{R}^3} \frac{(y_2^2-y_1^2) + (y_3^2-y_1^2)}{|y|^5}\omega_3(x-y) + \frac{3y_1y_3}{|y|^5}  \omega_1(x-y) dy \\
&\rd_1 u_3(x) = -\frac{1}{3}\omega_2(x) + \frac{1}{4\pi}\int_{\mathbb{R}^3} \frac{-3y_1y_2}{|y|^5}\omega_1(x-y) - \frac{(y_2^2-y_1^2) + (y_3^2-y_1^2)}{|y|^5}  \omega_2(x-y) dy,
\end{split}
\end{equation}\begin{equation}\label{eq:pr2_u}
\begin{split}
&\rd_2u_1(x) = -\frac{1}{3}\omega_3(x) + \frac{1}{4\pi}\int_{\mathbb{R}^3} - \frac{3y_2y_3}{|y|^5}\omega_2(x-y) - \frac{(y_1^2 - y_2^2) + (y_3^2 - y_2^2)}{|y|^5}\omega_3(x-y) dy \\
&\rd_2u_2(x) = \frac{1}{4\pi}\int_{\mathbb{R}^3} -\frac{3y_1y_2}{|y|^5}\omega_3(x-y) + \frac{3y_2y_3}{|y|^5}\omega_1(x-y) dy \\
&\rd_2u_3(x) =\frac{1}{3}\omega_1(x) + \frac{1}{4\pi}\int_{\mathbb{R}^3}\frac{(y_1^2 - y_2^2) + (y_3^2 - y_2^2)}{|y|^5}\omega_1(x-y) + \frac{3y_1y_2}{|y|^5}\omega_2(x-y) dy,
\end{split}
\end{equation}\begin{equation}\label{eq:pr3_u}
\begin{split}
&\rd_3u_1(x) = \frac{1}{3}\omega_2(x) +  \frac{1}{4\pi}\int_{\mathbb{R}^3} \frac{(y_1^2-y_3^2) + (y_2^2 - y_3^2)}{|y|^5}\omega_2(x-y) + \frac{3y_2y_3}{|y|^5}\omega_3(x-y) dy \\
&\rd_3u_2(x) = -\frac{1}{3}\omega_1(x) +\frac{1}{4\pi}\int_{\mathbb{R}^3} \frac{3y_1y_3}{|y|^5}\omega_3(x-y) - \frac{(y_1^2-y_3^2) + (y_2^2 - y_3^2)}{|y|^5}\omega_1(x-y) dy \\
&\rd_3u_3(x) = \frac{1}{4\pi}\int_{\mathbb{R}^3} \frac{-3y_2y_3}{|y|^5}\omega_1(x-y) + \frac{3y_1y_3}{|y|^5}\omega_2(x-y) dy. 
\end{split}
\end{equation}

% ----------------------------------------------------------------
\bibliographystyle{amsplain}
\bibliography{octant}
% ----------------------------------------------------------------

\end{document}